\newtheorem{remark}{Remark}
\newtheorem{proposition}{Proposition}[section]
\newtheorem{corollary}{Corollary}[section]
\newtheorem{lemma}{Lemma}[section]
\newcommand{\mycomment}[1]{ }
\def\Oij{\bO_{i,j}}
\newcommand\beq{\begin{equation}}
\newcommand\eeq{\end{equation}}
\renewcommand{\to}{\rightarrow}
\renewcommand{\det}{\operatorname{det}}
\renewcommand{\div}{\operatorname{div}}
\newcommand{\tr}{\operatorname{tr}}
\newcommand{\grad}{\boldsymbol{\nabla}}
\newcommand{\bA}{\boldsymbol{A}} %\newcommand{\bolda}{\boldsymbol{a}}
\newcommand{\bB}{\boldsymbol{B}}
\def\bD{\boldsymbol{D}}
\newcommand{\be}{\boldsymbol{e}}
\newcommand{\bF}{\boldsymbol{F}}
\newcommand{\bg}{\boldsymbol{g}}
\newcommand{\bG}{\boldsymbol{G}}
\newcommand{\bH}{\boldsymbol{H}}
\newcommand{\bI}{\boldsymbol{I}} % \newcommand{\I}{\boldsymbol{I}}
\newcommand{\bL}{\boldsymbol{L}}
\def\bn{\boldsymbol n}
\newcommand{\bO}{\boldsymbol{O}}
\def\bR{\boldsymbol R}
\newcommand{\bu}{\boldsymbol{u}}
\newcommand{\bU}{\boldsymbol{U}}
\newcommand{\bW}{\boldsymbol{W}}
\newcommand{\bx}{\boldsymbol{x}} % \newcommand{\xx}{\boldsymbol{x}}
\newcommand{\bY}{\boldsymbol{Y}}
\newcommand{\bSigma}{\boldsymbol{\Sigma}}
\def\Acal{\mathcal{A}}
\def\Ecal{\mathcal{E}} 			% \newcommand{\E}{\mathcal{E}}
\def\Gcal{\mathcal{G}}
\def\Pcal{\mathcal{P}}
\def\Vcal{\mathcal{V}}
\def\Zcal{\mathcal{Z}}
\newcommand{\EE}{\mathbb{E}}
\newcommand{\N}{\mathbb{N}}
\newcommand{\NN}{\mathbb{N}}
 \newcommand{\R}{\mathbb{R}} %\def\R{\mathbb{R}}
\newcommand{\bzero}{{\boldsymbol{0}}}
\title{ 
 Viscoelastic flows with conservation laws \\
 {\small % In 
 The shallow-water regime for Maxwell fluids}
}
\author{
S\'ebastien Boyaval, Laboratoire d'hydraulique Saint-Venant \\
Ecole des Ponts % ParisTech 
-- EDF R\&D -- CEREMA \\ % Universit\'e Paris-Est 
EDF'lab 6 quai Watier 78401 Chatou Cedex France, \\
\& MATHERIALS, INRIA Paris (sebastien.boyaval@enpc.fr)
}
\newcommand{\ro}{H}%{h}
\newcommand{\ux}{U}%{u}
\newcommand{\uy}{V}%{v}
\newcommand{\sx}{B_{xx}}%{\sigma_{xx}}
\newcommand{\sy}{B_{yy}}%{\sigma_{yy}}
\newcommand{\sxy}{B_{xy}}%{\sigma_{xy}}
\newcommand{\sz}{B_{zz}}%{\sigma_{zz}}
\begin{document}

\maketitle

\begin{abstract}

We propose in this work the first % a new ? 
symmetric hyperbolic % It is endowed with a strictly convex "entropy" useful only in absence of source term
system of \emph{conservation laws} to describe viscoelastic flows % in general !!
of  % conceptual
Maxwell fluids, i.e. fluids with memory that are characterized by one % single 
relaxation-time parameter.

% Here,
Precisely, the system of % conservation laws
quasilinear PDEs % equations 
is detailed % written studied with most details
for the % particular 
\emph{shallow-water} regime,
i.e. for % thin-layer
hydrostatic incompressible 2D flows with free surface under gravity.
% and ? isothermal / isentropic : not really (it means adiabtic and reversible for physicists)
% but equations have ``isentropic'' form, insofar as entropy depends only on temperature, not pressure (and entropy/temperature can be ``neglected'' in Clausius-Duhem inequality)
% That system
It generalizes Saint-Venant system % equiv. 2D isentropic flows of perfect -- barotropic ? -- fluids
to \emph{viscoelastic} flows of Maxwell fluids, % on introducing additional unknowns % fields
and encompasses previous % 1D and 2D 
works with F.~Bouchut.
It also generalizes the % large-strain 2D
(thin-layer) elastodynamics % for the large deformations 
of hyperelastic materials to viscous fluids,
and % therefore 
to various rheologies between solid and liquid states % establishing a bridge
that can be formulated using our new % "body elasticity" 
variable as material parameter. % in a thermodynamically-compatible way

The new viscoelastic flow model has many potential % numerical 
applications, additionally to % the theoretical interest of 
falling into the % physically and mathematically interesting 
theoretical framework % case 
of (symmetric hyperbolic) 
systems of conservation laws.
In computational rheology, it offers a new approach to the High-Weissenberg Number Problem (HWNP).
For transient geophysical flows, it offers perspectives of thermodynamically-compatible numerical simulations,
with a Finite-Volume (FV) discretization say.
Besides, one FV discretization of the new continuum model is proposed herein to precise our ideas % as well as
incl. the physical meaning of the solutions.
Perspectives are finally listed after some % preliminary 
numerical simulations.

\end{abstract}
%------------------------------------------------------------

\section{Introduction} % ** mathematical setting and modelling issue **
\label{sec:intro}

Many mathematical models have been proposed for \emph{flows of real % non-ideal 
fluids}. 
% that account for the specific material / physical properties such as viscosity
Like the celebrated Navier-Stokes equations \cite{lions-1996}, 
they mainly account for \emph{viscosity} as a manifestation of the fluid non-ideality.
But other macroscopic manifestations of the fluid % complex 
microstructure in continuum mechanics can also be accounted for, 
typically through complex deviatoric stresses \cite{renardy-2000}. % also barnes-hutton-walters-1989  coussot-ancey-1997

However, most of those models have a diffusion form % that cannot satisfy and therefore ;
; thus they violate the physical principle of a \emph{finite} speed of propagation for material/energetic perturbations.
This violation may not be so important in some applications % since the speed of propagation is very fast compared to the rate of change in most mechanical situations
; but it seems important to us e.g. in geophysics, 
when one is interested in the propagation of perturbations though \emph{large} systems. % like seas or rivers.

Moreover, a diffusion form also often entails unnecessary energy dissipation in numerical simulations ;
this prevents one from capturing realistic transient dynamics on \emph{long} time ranges.

% Diffusion also prevents some singular behaviours, but it is not fully satisfactory from the math viewpoint (3D still not unique)
% and its prevents from accurate modelling of sharp fronts with _discontinuities_. -- 

\smallskip

% To satisfy the principle of propagation at finite-speed, 
A few \emph{hyperbolic} systems of quasilinear PDEs have been proposed to model fluid flows 
with realistic shear-waves propagating at finite-speed.
They rely on seminal ideas of Maxwell \cite{Maxwell01011866,Maxwell01011867,maxwell-1874} after Poisson \cite{poisson-1831}
to produce viscous (frictional/dissipative) effects through the relaxation of an elastic deformation 
(i.e. a source term produces entropy in shear flows). % -- are shear flows necessary _vortical_ i.e. with non-zero vorticity ? --

For instance, \cite{PHELAN1989197,EDWARDS1990411} proposes a model for slightly compressible 2D flows of % compressible 
viscoelastic fluids, see also \cite{Guaily2010158} for recent numerical simulations. % olsson-ystrom-1993
Although the latter model does not preserve mass, it is quite close to a model that we proposed in \cite{bouchut-boyaval-2015} 
and where mass conservation can be ensured, see \cite{boyaval-hal-01661269}. 
But % in any case, also referring to \cite{boyaval-hal-01661269}, 
the latter model has a non-conservative form (see \cite{boyaval-hal-01661269})
with genuinely nonlinear products that seem difficult to interpret % numerically , especially
for multi-dimensional flows.
Another very interesting model has also been proposed recently in \cite{Peshkov2016,peshkov2018arXiv180600706P},
which aims at general % any % 3D 
flows of viscous fluids.
Like in new our model, that very inspiring work introduces an additional ``state variable'' for viscoelastic % flowing 
matter.
But the latter very recent proposition still deserves discussing with respect to its performance in applications, to its generalization to various % fluid 
rheologies and to its % physical / mathematical
justification.
Here, we propose an alternative % somewhat 
similar in spirit, but mathematically different.

\smallskip

In the present work, we propose a multi-dimensional symmetric-hyperbolic system of conservation laws
to model flows of (compressible) viscoelastic fluids of Maxwell type, which we believe new and % interesting
useful for any flow and for various rheologies. % linking the standard solid and liquid states one another.
The new flow model of Maxwell fluids is % however 
detailed % only 
for \emph{2D isothermal flows of incompressible fluids with a free-surface and a hydrostatic pressure}, see eq.~\eqref{eq:SVUCM0} in Section~\ref{sec:model}.
% Note that 
We have detailed our new viscoelastic approach in a low-dimensional case here,
i.e. an analog of 2D Euler equations % historically 
proposed e.g. by Saint-Venant \cite{saint-venant-1871} for shallow-water flows, 
because % we think that 
(i) our ideas are easier to understand theoretically and numerically then than in the most general case, %  indeed
(ii) that framework is useful already to a number of % first
applications in geophysics (see below), and
(iii) % we believe 
it is sufficient to see that our model % straightforwardly 
generalizes to 3D flows and more complex fluids. % when an equation of state summing distinct contributions for isochoric deformations and for the pressure like in \cite{flory-1961,Ndanou2014} ??

The new viscoelastic model contains our previous 1D model \cite{bouchut-boyaval-2013} as a closed subsystem for translation-invariant solutions,
and our previous 2D models \cite{bouchut-boyaval-2015,boyaval-hal-01661269} without a conservative formulation, see eq.~\eqref{eq:SVUCM} below.
In particular, it contains the standard shallow-water system of Saint-Venant in the zero elasticity limit $G\to 0$.
% The reason why it contains
To encompass the limitations of our previous models, % \cite{bouchut-boyaval-2015,boyaval-hal-01661269} is that 
we have % been able to 
interpreted them as particular ``closed'' hyperbolic subsystems of the new model.
We have generalized the elastodynamics system of hyperelastic materials to Maxwell fluids thanks to a new state variable $\bA$ to that aim, 
and our new model thus also contains standard elastodynamics equations when $\bA$ is uniform in space and time 
(i.e. at large \emph{Deborah, or Weissenberg number} $\lambda\gg1$).

\smallskip

% Our model has a source term that formally produces viscous (frictional) effects, 
% following seminal ideas of Maxwell \cite{Maxwell01011866,Maxwell01011867,maxwell-1874} after Poisson \cite{poisson-1831}.
%
Our new model has a molecular (``microscopic'') justification, formally at least,
which is reminiscent of the molecular theory of polymer solutions \cite{ottinger-1996-Book,doi-edwards-1998}, see Rem.~\ref{rem:kinetic}.
The new state variable is one kind of ``order parameter'' which accounts for the flow-induced distortion of the microsctructure,
relaxes due to thermal agitation at non-zero temperature % in a sheared flow
and then % potentially
creates viscous effects.
This is thermodynamically compatible : one can formulate the second principle % of thermodynamics
with an entropy functional, 
or with a Helmholtz free-energy that decays following a Clausius-Duhem inequality at non-zero temperature.

\smallskip

Our model copes well with a general formalism % which is typically 
desired for the mathematical modelling of continuum mechanics according to the admitted physical principles \cite{muller-ruggeri-1998}.
As a consequence, our model straightforwardly benefits from the % huge amount of work that has been done about that general formalism desired
numerous efforts toward a precise mathematical understanding of (solutions to the Cauchy problem for) symmetric-hyperbolic systems of conservation laws \cite{dafermos-2000},
and toward accurate numerical simulations of physically-meaningful solutions after discretization e.g. by the Finite-Volume (FV) method \cite{godlewski-raviart-1996,leveque-2002}.
In particular, our model is close to well-known systems:
the Euler gas dynamics system (formally equivalent to Saint-Venant shallow-water system when 2D and barotropic)
and the % compressible
elastodynamics system for hyperelastic materials \cite{wagner-2009}.

Of course, our model is also limited by the present state of the mathematical theory % for symmetric-hyperbolic system of conservation laws.
(multi-dimensional % weak % it is well-known that solutions cannot remain smooth
solutions are not % uniquely
well defined yet, see \cite{chiodaroli-delellis-kreml-2015} for instance), 
and of the FV method.
For ``entropy-stability'', one would like the free-energy to decay at the discrete level;
however it is not globally convex with respect to the whole set of conservative variables (unlike the energy).
Moreover, involutions like in the % compressible
elastodynamics system for hyperelastic materials are important % for stability
but hardly preserved discretely \cite{wagner-2009,kluth-despres-2010}.

However, systems of conservation laws are still % very 
much studied theoretically \cite{Demoulini2001,demoulini-stuart-tzavaras-2012,christoforou-galanopoulou-tzavaras-2018}
and numerically \cite{despres-2017}. So we believe our model has good perspectives.
And even if at present, one has to pragmatically resort to heuristics and empiricism in % multi-dimensional 
numerical simulations,
our model can already have many applications.
% in absence of a precise criterium to select solutions
%
For instance, it could be useful in environmental hydraulics to model complex fluid flows (turbulent/non-Newtonian)
with generalized Saint-Venant equations.
% thin free-surface gravity flows of polymer liquids like paints (provided surface tension is not too important) ??
% geophysical flows of non-Newtonian suspensions (like mudslides) ??
%
See in section~\ref{sec:saintvenant}
the modelling issue for \emph{real fluids} in Saint-Venant 2D framework. % (gravity free-surface flows on a flat bottom)

\mycomment{To that aim however, one may want to extend our model to non-flat bottom. We leave this technicality to further work.}

In section~\ref{sec:maxwell}, our new Saint-Venant-Maxwell (SVM) model % generalized to viscoelastic flows of Maxwell fluids (with zero retardation-time)
is introduced starting from the more standard SVUCM model for viscoelastic flows of Upper-Convected Maxwell fluids. 
We hope that our new viscoelastic approach solves a number of difficulties in computational rheology
like the High-Weissenberg Numebr Problem (HWNP) \cite{owens-philips-2002},
and generalization to compressible models \cite{Bollada2012} for thermodynamically-compatible mass transfers.
In particular, we recall that the standard viscoelastic models % of liquids 
usually require strict incompressibility 
with a non-zero retardation time (a background viscosity) \cite{keunings-1989,owens-philips-2002}. % \cite{knetchges-behr-elberti-2014,perrotti-walkington-wang-2017}.
% which prevents stress/strain to propagate at finite speed % i.e. to fully fulfil the promise of Maxwell's idea
% (despite the promise of Maxwell's concept to account for % the progressive development of 
% unsteady viscous states i.e. with vortical heterogeneities % when starting from rest
% in a more \emph{causal} way), see e.g. \cite{Coleman-Gurtin1965c,muller-ruggeri-1998,PhysRevE.62.7918}.
Now, whereas % strict incompressibilty and  a non-zero retardation time
the standard approach can be useful numerically in computational rheology for small-data solutions, to avoid % mathematical difficulties like discontinuous
singularities \cite{speziale-2000}, the trick % -- a physical nonsense ! --
does not seem to work well at high Weissenberg numbers \cite{keunings-1989,owens-philips-2002}; 
moreover, it naturally limits the scope of application. 
% thermodynamical compatibility is constrained wapperom-hulsen-1998-a, hence also mass transfers with compressible models 
% joseph-renardy-saut-1985

\smallskip

In Section~\ref{sec:FV} we propose a Finite-Volume (FV) discretization of our new model,
using a 1D Riemann solver of relaxation type (``\`a la Suliciu'') endowed with some discrete stability properties.
Numerical approximations % of our new model 
are naturally interesting for quantitative estimations in applications.
But % as a matter of fact, we also feel that 
they are also necessary qualitatively at present, if one wants to precisely % define
discuss solutions that are physically reasonable.

Note that our new model cannot be straightforwardly treated by the standard FV strategy 
% to numerically simulate multi-dimensional systems of conservation laws with source terms and a (strictly) convex ``mathematical entropy'', using a 1D Riemman-solver approach.
% That standard strategy is 
recalled in section~\ref{sec:general} for the sake of clarity. % , and since we have not found it as we would have like in the literature.
Indeed, 
(i) the mathematical entropy (i.e. the energy here, strictly convex with respect to conservative variables) should be replaced with
a Helmholtz free-energy (a priori non convex with respect to conservative variables) to ensure a second-principle in presence of source terms here
(i.e. at non-zero temperature), and
(ii) our model (without source term) is a convex extension of a system with involutions, and we need the exact preservation of those involutions at the discrete level
to ensure stability -- which is a well-known difficulty in multi-dimensional numerical discretizations.

In section~\ref{sec:oned}, we present a modification of the standard strategy.
% to numerically simulate our multi-dimensional systems of conservation laws using a 1D Riemman-solver approach.
% To that aim, 
We suggest to consistently reconstruct at interfaces and in cells 
those (conservative) variables which do not correspond to fundamental conservation laws in physics (unlike mass, energy, momentum).
Additionally, our numerical discretization gives some insight onto the physical meaning of the new system
(with a view to selecting physically reasonable solutions, recall).

We draw a conclusion in Section~\ref{sec:num} after showing some numerical results.

%%%% Dunn-Fosdick-1974 <<<=== first normal stress difference require non-convex energy !!  ?? %%%%%%%%%%%%%%%%%%%%%%%%%%%%%%%%%%

% Hyperbolic 
\section{Viscoelastic Saint-Venant % systems of eq. / models
equations} % for hydrostatic incompressible 
\label{sec:model}

\mycomment{Linear momentum conservation : postulated, or derived (like energy conservation !) from a variational principle ?}

Before introducing our viscoelastic Saint-Venant % systems of eq. / models
equations for % 2D flows of 
Maxwell fluids, 
let us recall the usual Saint-Venant system of equations, for 2D flows of simpler fluids,
i.e. the standard (nonlinear) shallow-water model for thin-layer (i.e. shallow) free-surface flows governed by a hydrostatic pressure.
% Learned reader may want to skip that part and directly proceed to the next subsection.

\subsection{Saint-Venant models for shallow free-surface flows}
\label{sec:saintvenant}

We consider a Eulerian description % fixed 
in a Galilean frame equipped with a Cartesian system of coordinates $(\be_x,\be_y,\be_z)$,
under a constant % and uniform
gravity field $\bg=-g\be_z$. 
% defining our Galilean referential frame ?? (where does ``mass'' come from ??)
%
The \emph{shallow} free-surface flows of homogeneous incompressible fluids % with constant mass density
above a flat impermeable plane $z=0$
can be modelled with a 2D velocity field $\bU(t,x,y)=U^x(t,x,y)\be_x+U^y(t,x,y)\be_y$ % (Eulerian description)
for ``infinitesimal fluid columns'' under a % supposedly 
non-folded free-surface $z=H(t,x,y)\ge0$ % governed by a kinematic condition (in virtue of the mass conservation principle)
using mass and momentum balance laws: % conservation
% governing equations % for the latter dependent variable fields 
% proposed by Saint-Venant \cite{saint-venant-1871}
\begin{align}
\label{eq:SVH}
& \partial_t H + \div( H \bU ) = 0 
\,,
\\
\label{eq:SVHUV}
& \partial_t(H\bU) + \div(H\bU\otimes\bU + H(P+\Sigma_{zz})\bI - H\bSigma_h ) = 0 % -KH\bU
\,,
\end{align}
where $\bI$ denotes the identity second-order tensor.

Assuming the % specific
pressure % close to 
hydrostatic $p(t,x,y,z)\approx g(H-z)$
% (coherent with assuming the % velocity variation equiv.
% acceleration small in vertical direction)
so $P=gH/2\approx\frac1H\int_0^H dz p$ in \eqref{eq:SVHUV},
the 2D velocity % field 
$\bU=(U,V)$ is % typically 
interpreted as a depth-average 
$$
U^x \approx \frac1H\int_0^H dz\; u^x \qquad U^y \approx \frac1H\int_0^H dz\; u^y
$$
of the horizontal components of a % full 
3D velocity % field 
$\bu=u^x\be_x+u^y\be_y+u^z\be_z$,
possibly the mean % time-average 
of a statistically stationary field e.g. in turbulent flows.

The ``stress'' term\footnote{
 Note that $P$, $\Sigma_{zz}$ and the components $\Sigma_{h,ij}$ have the dimension of energies per unit mass % m^2 s^-2
 rather than per unit volume here, unlike standard Cauchy stresses and pressures: % typically measured in Pa,  % kg m^-1 s^-2
 one could term them \emph{specific} forces. % impulse is still different, it has the dimension of momentum : kg m s^-1 -- Rk: Navier-Stokes is for momentum _density_ = specific, per unit volume
 But for the sake of simplicity, we will % often 
 omit the label ``specific'', % and keep the terminology pressure and stress, 
 insofar as there is no ambiguity here. % for isochoric processes. % (up to a multiplicative factor)
} $-\Sigma_{zz}\bI+\bSigma_h$ % of Cauchy-type 
is usually symmetric, i.e. $\Sigma_{yx}=\Sigma_{xy}$ in
\beq
\label{eq:horizontalstress}
 \bSigma_h = \Sigma_{xx}\be_x\otimes\be_x + \Sigma_{yy} \be_y\otimes\be_y + \Sigma_{xy} \be_x\otimes\be_y + \Sigma_{yx} \be_y\otimes\be_x
\eeq
and it accounts, depending on the closure, for: % formula
\begin{itemize}
 \item depth-averaged \emph{Cauchy stresses} when % closed by the viscous formulas
\begin{multline}
\label{eq:viscoussteadystate}
\Sigma_{xx}=2\nu\partial_xU^x \quad \Sigma_{yy}=2\nu\partial_yU^y \quad \Sigma_{xy}=\nu(\partial_xU^y+\partial_yU^x)=\Sigma_{yx} 
\\ \Sigma_{zz}=-\nu(\partial_xU^x+\partial_yU^y)
\end{multline}
(the case of Newtonian fluids with constant viscosity $\nu>0$ % and small shear !! in any vertical plane
see e.g. \cite{gerbeau-perthame-2001,marche-2007} for a justification based on % a formal
asymptotic analysis of the depth-averaged Navier-Stokes equations for % the momentum balance of
Newtonian fluids), or
% (formulas \eqref{eq:viscoussteadystate} are consistent with % the depth-averaged of
% incompressible Navier-Stokes equations in cases with small shear)
%
% can be interpreted as a formal approximation of
% approximates the depth-averaged free-surface Navier-Stokes equations formally 
% with viscous stresses for a viscous fluid provided $\Sigma_{zz}=-(\Sigma_{xx}+\Sigma_{yy})$ %  % stress deviator !!
% considering thin-layer flows where one assumes that there is no contribution of the deviator to hydrostatic equilibrium and
% $\Sigma_{xx}=\nu\partial_xU$, $\Sigma_{yy}=\nu\partial_yV$, $\Sigma_{xy}=\Sigma_{yx}=\frac\nu2(\partial_xV+\partial_xU)$, $\Sigma_{zz}=-\nu(\partial_xU+\partial_yV)$
% which preserves the stress decomposition into a pressure $P$ equal to one-third of the Cauchy stress trace, and a traceless deviator $\bSigma$
%
% The vertical component $w$ of the velocity in 3D stratified free-surface thin-layer (i.e. shallow) flows 
% approximated by the system (\ref{eq:SVH}--\ref{eq:SVHUV}) % is generally considered 
% is usually reconstructed as $W(z)=-z\div\bU$ assuming strict incompressibility,
% which is compatible with the kinematic condition $\partial_t H + \bU\cdot\grad H = -H \div\bU$ on the free surface. % and zero at $z=0$ although an approximation : favre waves
%
% OK: checked compatibility of Newtonian fluid law with frame rotation
%
 \item empirical corrections in the depth-averaged acceleration terms
$$ 
H \int_0^H dz\, u^iu^j \approx \left(\int_0^H dz\, u^i\right) \left(\int_0^H dz\, u^j\right)
$$
when the horizontal components $(u^x,u^y)$ of the (time-averaged) velocity % i.e. a  
do not have a uniform profile in vertical direction\footnote{
 The effect is called \emph{dispersion} in hydraulics.
}, % after Taylor
or % and !!
when the second-order moments 
% (of % a similar kinematic
% inertial nature) % not energetic !?
of a turbulent % 3D 
velocity \footnote{
 They are usually termed \emph{turbulent stresses}. % in \emph{Reynolds average} procedure  
} cannot be neglected in the dynamics of the (depth-averaged) mean % time-averaged % full 3D 
velocity field $\bu$. % $u\be_x+v\be_y+w\be_z$ % for interpretation of U and V
% in comparisiong with the depth-average of the true fluctuating velocity field in a turbulent flow
\end{itemize}

New 2D models keep being developped for turbulent flows (see e.g. \cite{Teshukov2007,hal-01529497,boyaval-hal-01661269}) %  for the weakly-sheared case
as well as for real non-Newtonian fluids (see e.g. \cite{bouchut-fernandeznieto-mangeney-narbonareina-2015}).
% (for landslides \cite{pastor-et-al-2015},
% debris flows \cite{bouchut-fernandeznieto-mangeney-narbonareina-2015}
% or fast geomorphodynamics events \cite{greco-iervolino-leopardi-vacca-2012}\dots).

One approach to the % 2D flow 
modelling of non-Newtonian stresses is to start with 3D models and next close 2D depth-averaged models using scaling assumptions 
(like \cite{gerbeau-perthame-2001,marche-2007} for Newtonian fluids, % for instance 
\cite{narbona-reina-bresch-2010} or our previous work \cite{bouchut-boyaval-2015} otherwise). %  with Bouchut
% where we could obtain various models following such a mathematical procedure 

But the 3D viscoelastic flows models still raise many questions (see Introduction),
and we believe a direct 2D analysis in the shallow framework could be a useful % less-involved mathematically and physically ... ?
alternative %  approach, as well as a 
first step toward better 3D % non-Newtonian fluid 
models. 
% in general, for a better mathematical understaing at least
Let us recall precisely the modelling issue, which still holds for \eqref{eq:SVH},\eqref{eq:SVHUV},
or its more usual formulation with source terms modelling friction over rugous bottom in real % istic 
application~\cite{hervouet-2007}:
\begin{align*}
% \label{eq:SVHH}
& \partial_t H + \partial_x( H U^x ) + \partial_y(H U^y) = 0
\\
% \label{eq:SVU}
% & \partial_t U + (U \partial_x + V \partial_y) U + \partial_x (P+\Sigma_{zz}-\Sigma_{xx}) - \partial_y \Sigma_{xy} = -K U
% & \partial_t(HU) + \partial_x( HUU ) + \partial_y( HUV ) + \partial_x (HP+H\Sigma_{zz}-H\Sigma_{xx}) - \partial_y (H\Sigma_{xy}) = -KHU
% \label{eq:SVHU}
& \partial_t(HU^x) + \partial_x( HU^xU^x + HP+H\Sigma_{zz}-H\Sigma_{xx} ) + \partial_y( HU^xU^y - H\Sigma_{xy} ) = -KHU^x
\\
% \label{eq:SVV}
% & \partial_t V + (U \partial_x + V \partial_y) V - \partial_x \Sigma_{yx} + \partial_y (P+\Sigma_{zz}-\Sigma_{yy}) = -K V
% & \partial_t(HV) + \partial_x( HUV ) + \partial_y( HVV ) - \partial_x (H\Sigma_{yx}) + \partial_y (HP+H\Sigma_{zz}-H\Sigma_{yy}) = -KHV
% \label{eq:SVHV}
& \partial_t(HU^y) + \partial_x( HU^xU^y - H\Sigma_{yx} ) + \partial_y (  HU^yU^y + HP+H\Sigma_{zz}-H\Sigma_{yy} ) = -KHU^y
\end{align*}
whose % classical 
smooth physically-meaningful solutions are expected to satisfy an additional conservation law for a % specific 
free-energy $E = \frac{1}2 \left( |\bU|^2 + e \right)$ with % specific dissipation % term 
$D\ge0$ % for given forces (i.e. given stress formulas).
: 
% \eqref{eq:energyconservation} 
\beq%gin{multline}
\label{eq:energyconservation} 
\partial_t(HE) + \div(HE\bU+H(P+\Sigma_{zz})\bU-H\bSigma_h\cdot\bU) = -KH |\bU|^2 - HD
\eeq%nd{multline}
% as a formulation of the thermodynamics second-principle
or equivalently % compatible evolution of 
for a specific internal energy $e$:
\beq
\label{eq:internalenergydissipation} 
\partial_t e + (\bU\cdot\grad)e + (P+\Sigma_{zz})\div\bU - \bSigma_h:\grad\bU = -D \,.
\eeq

% \begin{align}
% \label{eq:SVH}
% & H(\partial_t e + (\bU\cdot\grad)e) + H (P+\Sigma_{zz})\div\bU - H\bSigma_h\grad\bU = -HD
% & H(\partial_t\bU + (\bU\cdot\grad)\bU) + \div( H (P+\Sigma_{zz})\bI - H\bSigma_h ) = -KH\bU
% \end{align}

In the pure viscous case \eqref{eq:viscoussteadystate} (Newtonian fluids),
it is well-known that \eqref{eq:energyconservation} holds as \emph{equality}
with % a dissipation 
$D=2\nu\left(|D(\bU)|^2+2|\div\bu|^2\right)\ge0$ and $e=gH$
for smooth solutions with a Helmholtz free-energy $E$ % isothermal flows ? usually 
termed the \emph{mechanical energy}
\begin{equation}
\label{mechanicalenergy} 
E = \frac{1}2 \left( |\bU|^2 + gH \right) \,,
\end{equation}
while \eqref{eq:energyconservation} only holds as \emph{inequality} ($\le$) for weak solutions 
that implicitly model irreversible flows \cite{Vasseur2016}. % (then interpreted as a formulation of the thermodynamics second-principle).

In the ideal case $\Sigma_{zz}=0,\bSigma_h=\bzero$ for the widely-used inviscid shallow-water model % of Saint-Venant \cite{saint-venant-1871}
% reminiscent of the 2D gas-dynamics equations of Euler % for translation-invariant solutions
% with mass density proportional to $H$,
% with \emph{specific} pressure $P+\Sigma_{zz}$
% and with \emph{specific} % Cauchy
% stress-deviator $\Strs_h$. % since we consider here an isochoric process (the flow is incompressible) 
(or Euler %generic ideal 
isentropic 2D flow model for perfect polytropic gases with $\gamma=2$),
the equality \eqref{eq:energyconservation} also holds for smooth solutions to
% Cauchy problems for...  at least on small-times \cite{benzonigavage-serre-2007}
the symmetric hyperbolic quasilinear system
(note Godunov-Mock theorem applies 
% so the inviscid shallow-water model % like isentropic Euler 2D model
% is a symmetric hyperbolic quasilinear system
since \eqref{mechanicalenergy} is a strictly convex function of $(H^{-1},\bU)\in\R_{>0}\times\R^2$)
see e.g. \cite{godlewski-raviart-1996,benzonigavage-serre-2007}.
% which allow one to numerically % compute
% predict  evolutions % of pertubations around rest ??
% of some compressible gases 
% \cite{chen-wang-2002}. % in L^2
%
% The % quasilinear 
% homogeneous system of % first-order PDEs % (obtained when $\nu=0=k$)
% conservation laws is (symmetric) \emph{hyperbolic}, which motivates the computation of small-time evolutions 
% as $L^2$ solutions to % well-posed 
% Cauchy problems, % in $L^2$, 
% see e.g.~\cite{serre1-1996}.
%
Moreover, on requiring the % Clausius-Duhem
\emph{inequality} % a formulation of the thermodynamics second principle for a Helmholtz free-energy
associated with \eqref{eq:energyconservation} ($D=0$ in the ideal case):
\beq%gin{multline}
\label{eq:energydissipation} 
\partial_t(HE) + \div(HE\bU+H(P+\Sigma_{zz})\bU-H\bSigma_h\cdot\bU) \le -KH |\bU|^2 % - HD 
\eeq%nd{multline}
one can define % bounded measurable 
physically-admissible \emph{entropy solutions} % to Cauchy problems 
where irreversible processes dissipate the mechanical energy \eqref{mechanicalenergy},
that are unique in 1D within a translation-invariant solution class 
(see e.g. \cite{bianchini-bressan-2005} when $H>0$ and the shallow-water system is strictly hyperbolic).
% {eq:unified} 
% The shallow-water model of Saint-Venant is widely used e.g. in environmental hydraulics
% for nonlinear \emph{irrotational} water waves in flat open channels with rugosity $k\ge 0$ \cite{chow-1959}, % to be calibrated \textbf{(Ch\'ezy, Manning-Strickler)}
% % the boundary $z=0$ stands for a rigid and rugous impermeable wall with a friction coefficient $k$
% at least to predict the dynamics of \emph{long surface waves}.

None of the two standard cases above model well the propagation of shear stress/strain at finite-speed\footnote{
  The propagation of information at finite-speed is not only % more 
  physical. It also allows for a precise computation of fronts, that can be compared with experimental observations.
  % see numerical practice in \cite{toro-1999}, % (strain fronts in particular)
  This is the reason why the inviscid model is more often used in practice, e.g. by hydraulic engineers,
  although it cannot sustain shear motions like the viscous model (i.e. a non-trivial steady state in shear flows).
  %  (incl. normal stresses) non-Newtonian rheology ???????
}. So let us look for a hyperbolic quasilinear model of Saint-Venant type (a shallow-water model) 
for 2D viscoelastic flows. Precisely, we look % for systems of conservation laws endowed with 
an additional % conservation
law like \eqref{eq:energyconservation} that properly defines a notion of viscosity % with finite speed of propagation for shear waves !!
and accounts for vortices in stationary flows.

% Non-Newtonian 
\subsection{Saint-Venant models generalized to Maxwell fluids}
\label{sec:maxwell}

%%%%%%%%%%%%%%%%%%%%%%%%%%%%%%%%%%%%%%%%%%%%%%%%%%%%% formal derivation by asymptotics -- mention variational principle too ?

Viscoelastic fluids of Maxwell type %\cite{Maxwell01011867}
% J.C. Maxwell, « On the dynamical theory of gases », Phil. Trans. Royal Soc., no 157,‎ 1867, p. 49-88 (DOI 10.1098/rstl.1867.0004) ; J.C. Maxwell, « On the dynamical theory of gases », Phil. Mag., vol. 35, no 235,‎ 1868, p. 129-145 et 185-217
% Le modèle de Kelvin-Voigt est un modèle unidimensionnel décrivant le comportement mécanique d'un solide visqueux. 
% En effet, lorsque l'on cesse d'appliquer un chargement à ce matériau, il recouvre toujours la même configuration (forme et dimensions). 
% L'existence d'une configuration stable et unique sous chargement nul est caractéristique des solides, et les différencie des fluides.
% Woldemar Voigt, « Über die innere Reibung der festen Körper, insbesondere der Krystalle – Erster Teil », Abhandlungen der Königlichen Gesellschaft von Wissenschaften zu Göttingen, vol. 36,‎ 1890, p. 3-47 http://gdz.sub.uni-goettingen.de/dms/load/img/?PID=GDZPPN002020785&physid=PHYS_0078
% William Thomson, Math. And Phys. Papers, vol. 3, Cambridge, 1890; également Encyclopedia Britannica, vol. 3, Londres, 1875, « Kelvin W. »
are characterized by an elasticity modulus $G$ (in stress units) % per non-dimensional strain !!
and a finite relaxation time scale\footnote{
 Typically characterizing the time needed for the % elastic response of the
 stress in a fluid initially at rest to relax to a viscous state (proportional to strain-rate by the viscosity factor $\nu=G\lambda>0$) 
 after suddenly straining the fluid at a fixed % permanently 
 maintained rate. % typically in shear flows 
}
$\lambda$ (termed Weissenberg number when non-dimensionalized with a time scale of the flow like $|\grad\bU|^{-1}$).
% or a Deborah number when compared with another time scale of the fluid

Closure formulas for the non-Newtonian Cauchy stresses of Maxwell fluids can be obtained 
e.g. following the same depth-averaged %asymptotic
analysis as in \cite{gerbeau-perthame-2001,marche-2007}, 
starting with full 3D models for free-surface flows of Maxwell fluids
see e.g. \cite{bouchut-boyaval-2015}. % for the depth-averaged extra-stress variable $\bSigma=\bSigma_h+\Sigma_{zz}\be_z\otimes\be_z$
% Various equations arise depending on the asymptotics AND RETARDATION TIME.
When the time rate of change for the stress tensor $\bSigma$ is the \emph{upper-convected} % objective 
derivative (i.e. we consider Upper-Convected Maxwell fluids), one obtains
\begin{align}
\label{eq:Sigmah}
& D_t \bSigma_h - \bL_h\bSigma_h - \bSigma_h \bL_h^T = (2\nu\bD_h-\bSigma_h)/\lambda
\\
\label{eq:Sigmazz}
& D_t \Sigma_{zz} + 2\div\bU \Sigma_{zz} = (-2\nu\div\bU-\Sigma_{zz})/ \lambda
\end{align} % for instance
without background viscosity (i.e. with zero retardation time in viscoelastic terminology)
in some asymptotic regime,
denoting $\bL_h$ the horizontal velocity gradient in \eqref{eq:Sigmah}
which corresponds to the horizontal stress tensor \eqref{eq:horizontalstress}.

Viscous stresses with a viscosity $\nu>0$ arise % formally 
from \eqref{eq:Sigmah},\eqref{eq:Sigmazz} in % the momentum balance 
\eqref{eq:SVHUV} when $G=\lambda\nu$, $\lambda\to0$.
% but stationary states can have non-Newtonian components ?? !! 
On the contrary, when $\lambda\to\infty$, the fluid becomes purely elastic and governed by
the homogeneous quasilinear system \eqref{eq:SVUCM} which is similar % formally equivalent 
to a thin-layer approximation %  shell model 
of the % nonlinear elasticity 
elastodynamics system governing the % finite-strain 
deformations of a Hookean hyperelastic continuum (see details below).
% \eqref{eq:langevin1} with $\zeta=0$ is nothing but the (deterministic) evolution equation
% for an infinitesimal (vector) increment of deformation ; % between two material points infinitesimally close within a continuum 

%%%%%%%%%%%%%%%%%%%%%%%%%%%%%%%%%%%%%%%%%%%%%%%%%%%%% mathematical interpretation

With $\bB_h:=\lambda\bSigma_h/\nu+\bI$ and $B_{zz}:=\lambda\Sigma_{zz}/\nu+1$,
\eqref{eq:SVH},\eqref{eq:SVHUV},\eqref{eq:Sigmah},\eqref{eq:Sigmazz} rewrites
% like in \cite{bouchut-boyaval-2013} for the 1D case,
% for $H$, $\bU$ and % the (2-contravariant) tensor field $\bB:=\lambda\bSigma/\nu+\bI$ % (instead of $\bSigma$)
% $\bB_h:=\lambda\bSigma_h/\nu+\bI$, $B_{zz}:=\lambda\Sigma_{zz}/\nu+1$ 
as the \emph{hyperbolic} quasilinear system \eqref{eq:SVUCM} (see App.~\ref{app:hyperbolicity} for details and a proof):
\beq
\boxed{
\label{eq:SVUCM}
\begin{aligned}
& \partial_t H + \div( H \bU ) = 0
\\
& \partial_t(H\bU) + \div(H\bU\otimes\bU + (gH^2/2 + GH B_{zz})\bI - GH\bB_h ) = -KH\bU
\\
& \partial_t \bB_h + \bU\cdot\grad \bB_h % D_t \bB_h 
- \bL_h\bB_h - \bB_h \bL_h^T % - \zeta (\bD_h\bB_h + \bB_h \bD_h^T) 
= (\bI-\bB_h)/\lambda
\\ 
& \partial_t B_{zz} + \bU\cdot\grad B_{zz} % D_t B_{zz} + 
+ 2 % (\zeta-1)
B_{zz} \div\bU = (1-B_{zz}) / \lambda
\end{aligned}
}
\eeq
which is % quite 
similar to another hyperbolic system for viscoelastic flows already known in the literature, see Rem.~\ref{rem:similarity}.

But hyperbolicity is % usually considered as
only a necessary condition for the definition of % a well-posed 
a sensible initial-value problem % of Cauchy-type on small times $t\in[0,T)$, with given initial condition on $\RR^2$ 
with a % multidimensional 
quasilinear system 
(in a very weak sense, see e.g.~\cite{benzonigavage-serre-2007} in the particular case of constant coefficients),
it is % usually 
not sufficient. % to actually show the existence of a solution, which is uniquely defined by IC on small times at least
% cite MR3772193 hal-01444446 F Colombini, Guy Métivier
The initial-value problem % of Cauchy
can be shown well-posed (on small times $t\in[0,T)$ for smooth initial data)
for the \emph{symmetric hyperbolic} quasilinear systems % of first-order 
\cite{Kato1975,majda1984book}.
% one is able to construct a strong solution that is uniquely defined by IC
For instance, the % (homogeneous) 
systems of \emph{conservation laws} % in particular 
are symmetric hyperbolic 
when they are endowed with an additional conservation law for a strictly-convex functional termed ``entropy'' \cite{godunov-1959,godlewski-raviart-1996}.
% then one can define solution using smooth initial data
% one is also able to define very weak solutions on larger times $T$, when a strong solution has blown up typically, with the concept of
% entropy/dissipative measure-valued solution like in \cite{Demoulini2001}, for general ``large'' ( multi-dimensional ) data.

Although the quasilinear SVUCM system is endowed with an additional % companion 
conservation law for a convex free-energy functional (see $E$ below in \eqref{eq:energy}),
\eqref{eq:SVH},\eqref{eq:SVHUV},\eqref{eq:SVUCM} is obviously not in conservation form.
So, the additional conservation law is not useful,
and the meaning of % ``classical'' 
weak solutions remains unclear. % at that stage,

One could try to make sense of SVUCM as such with non-conservative products like in \cite{LeFloch1989,berthon-coquel-lefloch-2011}. % ,ystrom-olsson-kreiss-1993
% % to actually make sense % to establish 
% % of SVUCM as a % mathematical 
% % model for time-evolutions, 
% % one might either try to build a new theory for non-conservative hyperbolic systems like SVUCM % to show that solutions exist 
% % on requiring more information than provided by % the sole convex entropy
% % the companion conservation law to construct physically-meaningful solutions % lefloch-2002
However, note that SVUCM is unlikely to possess ``generalized symmetrizers'', % in general 
see the analysis % study 
in \cite{olsson-ystrom-1993} for a close system in 2D, % 
while ``generalized symmetrizability'' seems a minimum requirement % necessary condition ?
(see e.g. \cite{MR3322782}) to define a meaningful concept of solution
(like dissipative measure-valued solutions satisfying a weak-strong uniqueness principle as in \cite{demoulini-stuart-tzavaras-2012}).
%
%And indeed building a theory for SVUCM has proved quite difficult beyond the 1D case % see e.g. 
%\cite{bouchut-boyaval-2013,boyaval-hal-01661269} so far.

Besides, let us also recall that the question how to correctly formulate (multidimensional) equations for flows of non-Newtonian fluids is not settled in general,
especially for \emph{compressible} viscoelastic flows \cite{Bollada2012}, although it has received a number of answers
(the close 2D system discussed in \cite{olsson-ystrom-1993} % \cite{Guaily2010158}
 see also Rem.~\ref{rem:similarity} % about connections with our model
 is for \emph{slightly compressible} viscoelastic fluids). % and has not been investigated much further, 

That is why we propose here to consider a % slight 
modification of \eqref{eq:SVH},\eqref{eq:SVHUV},\eqref{eq:SVUCM} that bears the same physical meaning
but conforms with the existing theory for symmetric hyperbolic system of conservation laws.
% As a matter of fact, it turns out that \eqref{eq:SVH},\eqref{eq:SVHUV},\eqref{eq:SVUCM} is % formally
% contained in our new system.
Our strategy is to devise an enlarged system of conservation laws % (with a convex extension)
that contains \eqref{eq:SVH},\eqref{eq:SVHUV},\eqref{eq:SVUCM}.

%%%%%%%%%%%%%%%%%%%%%%%%%%%%% \smallskip 

To that aim, observe first that \emph{when $\lambda\to\infty$}, 
$\bB_h$ and $B_{zz}$ in fact identify with the horizontal and vertical components (resp.) 
of the Cauchy-Green (left) deformation tensor $\bB=\bF\bF^T$ % sometimes termed Finger, or its inverse !!
in a hyperelastic incompressible homogeneous continuum,
where $\bF=\partial_{a,b,c}(x,y,z)$ % \approx\bF_h+F^z_c\be_z\otimes\be_c (so only components $\bB_h$ and $B_{zz}$ are non-negligible in $\bB$)
is the deformation gradient % given for the current configuration in the current Cartesian coordinate system
with respect to a reference configuration in the Cartesian coordinate system $(\be_a,\be_b,\be_c)$.

Precisely, % we note that 
the Cauchy stress terms in the momentum equation % equivalently 
rewrite 
$$
H\bSigma_h = |\bF_h|^{-1}\partial_{\bF_h}\left(\frac{G}2\bF_h:\bF_h\right)\bF_h^T
\text{ and } 
H\bSigma_{zz} = |F^z_c|\partial_{F^z_c}\left(\frac{G}2|F^z_c|^2\right)F^z_c
$$ 
when $H\equiv F^z_c=|\bF_h|^{-1}>0$.
So, when $\lambda\to\infty$, eq. \eqref{eq:SVUCM} in the SVUCM system is a consequence
of the elastodynamics of \emph{Hookean} incompressible materials with uniform mass density % (which we arbitrarily fix to 1 for convenience)
\emph{in the shallow-water regime}, % $\bF\approx\bF_h+F^z_c\be_z\otimes\be_c$
using
\begin{equation}
\label{eq:defgradient} 
\partial_t \bF + (\bu\cdot\grad) \bF = \bL \bF \,.
\end{equation}
% by definition for bijective material evolutions with respect to a fixed configuration equipped with a (Cartesian) coordinate system
% in the current configuration also equipped with a (Cartesian) coordinate system: hypothesis for a Eulerian description
% and Jacobi formula
% \begin{equation}
% \label{eq:defgradientdet} 
% \partial_t |\bF_h| + (\bU\cdot\grad) |\bF_h| = (\grad\cdot\bU) |\bF_h|
% \end{equation}
% 
In fact, the SVCUM system \eqref{eq:SVH},\eqref{eq:SVHUV},\eqref{eq:SVUCM} 
\emph{coincides} %formally  
with the elastodynamics of a % purely 
2D hyperelastic continuum, 
with $\bB_h\equiv\bF_h\bF_h^T$ computed from $\bF_h$
\begin{equation}
\label{eq:defgradienth} 
\partial_t \bF_h + (\bu\cdot\grad) \bF_h = \bL_h \bF_h
\end{equation}
when $\lambda\to\infty$,  recall \eqref{eq:defgradient} as well as $B_{zz}\equiv|\bF_h|^{-2}$ by incompressibility.
The energy conservation law \eqref{eq:energyconservation} is % then consistently 
satisfied by the Helmholtz 
\emph{polyconvex} free-energy
\begin{equation}
\label{eq:energy} 
E = (U^2+V^2)/2+E_H+E_\Sigma % \left( |\bU|^2 + gH + G \tr(\bC-\ln\bC-\bI) \right)/2 \,,
\end{equation}
with $D=0$ (recall $H=|\bF_h|^{-1}$; $\lambda\to\infty$ means no source) and % an
internal energy
\begin{equation}
\label{eq:energypolyconvex} 
e \equiv E_H+E_\Sigma := \frac{g}2|\bF_h|^{-1} + \frac{G}2 ( \bF_h:\bF_h + |\bF_h|^{-2} ) \,.
\end{equation}
% This observation also shows that our formulation of SVUCM is \emph{hyperelastic}
% (as opposed to a \emph{hypoelastic} formulation, see e.g.\cite{Romenskii1974}).
% It shall prove a useful starting point to unequivocally define a solution to the nonlinear SVUCM model. % in nonconservative form !
This is consistent with the fact that Cauchy stress term $H\bSigma_h-H\bSigma_{zz}\bI\equiv GH(\bB_h-B_{zz}\bI)$ in the momentum equation % are indeed equivalently given by
actually equals $H(\partial_{\bF_h}e)\bF_h^T$.

% shear elastic modulus $G=\nu/\lambda$,
% Young E = 2G(1+\nu) = 3K mu / (lambda+mu)
% Young's modulus (E) describes tensile elasticity, or the tendency of an object to deform along an axis when opposing forces are applied along that axis; it is defined as the ratio of tensile stress to tensile strain. It is often referred to simply as the elastic modulus.
% The shear modulus or modulus of rigidity (G or μ {\displaystyle \mu \,} \mu \,) describes an object's tendency to shear (the deformation of shape at constant volume) when acted upon by opposing forces; it is defined as shear stress over shear strain. The shear modulus is part of the derivation of viscosity.
% The bulk modulus (K) describes volumetric elasticity, or the tendency of an object to deform in all directions when uniformly loaded in all directions; it is defined as volumetric stress over volumetric strain, and is the inverse of compressibility. The bulk modulus is an extension of Young's modulus to three dimensions.
% Three other elastic moduli are Poisson's ratio, Lamé's first parameter λ , and P-wave modulus.
% Poisson's ratio, denoted ν {\displaystyle \nu } \nu (nu) and named after Siméon Poisson, is the signed ratio of transverse strain to axial strain. ν {\displaystyle \nu } \nu is the amount of transversal expansion divided by the amount of axial compression, for small values of these changes.
% The bulk modulus can be expressed as K = λ + ( 2 / 3 ) μ {\displaystyle K=\lambda +(2/3)\mu } K=\lambda +(2/3)\mu 

So, at least when $\lambda\to\infty$, one can make sense of SVUCM system % is mathematically sound 
to % mathematically rigorously
model time-evolutions.
SVUCM coincides with the 2D elastodynamics of a hyperelastic incompressible materials,
which is equipped with a polyconvex energy like \eqref{eq:energypolyconvex}
and % thus 
a symmetric hyperbolic conservative formulation
% \footnote{
%   Thanks to physically natural \emph{involutions}, % (i.e. which hold when interpreting physically the model) 
%   elastodynamics can be written with conservation laws, % of nonlinear quantities
%   either using $\bG_h:=\bF_h^{-1}$ as a natural variable in Eulerian descriptions of the flow \cite{zbMATH01293055-qin-1998,wagner-2009,Favrie2014300}
%   % $\nabla \wedge \bF^{-1} = 0$ % row-wise (i.e. the rows of $\bF^{-1}$ are gradients of the Eulerian coordinates)
%   or on using another evolution equation than \eqref{eq:defgradienth} for $\bF_h$ \cite{PLOHR1988481,wagner-1994,kluth-despres-2010}.
% }
(see e.g. \cite{dafermos-2000,wagner-2009}):
%
% (The nonlinear SVUCM model % (\ref{eq:SVH}--\ref{eq:SVHUV}--\ref{eq:Ch}--\ref{eq:Cz}) obviously 
% preserves mass, momentum and energy, but this is not enough conservation laws to give a precise meaning to weak solutions.)
% \footnote{
%  It is well-known that the lifespan of classical % smooth 
%  solutions to nonlinear hyperbolic equations is bounded in general, 
%  so one has to consider weak solutions \cite{benzonigavage-serre-2007}.
%  But note for instance that the case when $\bB_h$ and $\bU$ are simultaneously discontinuous is not well-defined,
%  so the existence of some kind of weak solutions is not obvious,
%  let alone uniqueness (weak solutions to multidimensional hyperbolic systems of conservation laws remain equivocal, at present \cite{chiodaroli-delellis-kreml-2015}).
% %
% % We are not aware of a general theory % that would help us in building
% % for multidimensional solutions to (hyperbolic) quasilinear systems.
% % CONWAY1967238
% %
% }).
the Cauchy problems % of the latter quasilinear system 
are well-posed on small times given smooth initial conditions.

\smallskip

For the general case with % any 
$\lambda>0$, we now propose % the following conservative reformulation % a generalization of
to embed SVUCM % very useful for mathematics, hence numerical simulations
into a quasilinear system of \emph{conservation laws}
for the \emph{2D visco-elastodynamics} of a hyperelastic incompressible continuum with memory.
To that aim, we introduce new state variables\footnote{
 One may also want to call them ``internal'' variables. % however this is different from known theories with internal variables (colema gurtin)
 % One possible physical interpretation bears similarities with Lodge's body-tensor approach \cite{lodge-1974} or SEDOV ?
 % But we are not aware of an equivalent system of conservation laws for viscoelastic flows in the literature.
} $\bA_h=\bA_h^T>0$, $A_{cc}>0$ % describing the state of 
that % keep track of
account for % fluid, time-dep 
``viscous'' deformations of the % matter's % state -- precisely, the matter's elastic
microstructure % within each ``material volume element'' --
% in a (hyperelastic incompressible) continuum with memory
in a coordinate system attached to the reference configuration,
%sometimes termed 
which is reminiscent of the distortion metric used in elasto-plasticity \cite{NARDINOCCHI201334,lee-1969}.
We postulate % that they are governed by the 
simple constitutive % !! <<<<<<<<<<<<<<<<<<<<<<<<<< 
laws (see Remark~\ref{rem:kinetic}):
% $\bSigma=\bT\bG_h^{-T}$ is still defined by the second Piola-Kirchoff stress tensor $\bT=\frac{d}{d\bF_h}W$ with elastic energy $W$: a hyperelastic media 
\begin{align}
\label{eq:Ah}
& D_t \bA_h = (\bF_h^{-1}\bF_h^{-T}-\bA_h)/\lambda \,,
\\
\label{eq:Acc}
& D_t A_{cc} = (H^{-2}-A_{cc})/\lambda \,. % (|\bF_h|^2-A_{cc})/\lambda
\end{align}
% which are frame invariant !!
%
% Using the interpretation of $\bB=\bF\bF^T$ as left Cauchy-Green deformation tensor and
% $\bF$ as a deformation gradient (i.e. the columns of the matrix $\bF$ are material vectors like $\bX$ solution to \eqref{eq:langevin1})
% we have not been able to define a deterministic source term 
% either for % a conservation law
% an evolution equation using $\bF_h$ \cite{PLOHR1988481,wagner-1994,kluth-despres-2010}.
% or for one using $\bG_h$ \cite{zbMATH01293055-qin-1998,wagner-2009,Favrie2014300}
% that both preserves the natural involutions % (necessary for the definition of a weak solution through conservation laws)
% and yields exactly SVUCM.
%
% Indeed 
Then, % inspired by SVUCM above, 
using in \eqref{eq:energy} % E = (U^2+V^2)/2+E_H+E_\Sigma 
the internal energy % potential
of SVUCM (recall~\cite{wapperom-hulsen-1998-a,bouchut-boyaval-2013} e.g.)
\begin{equation}
\label{eq:internalenergy} 
E_H+E_\Sigma = \frac{g}2 H + \frac{G}2 \left( \tr(\bB_h)-\ln\left(\det\bB_h\right) + B_{zz}-\ln(B_{zz}) \right) \,,
\end{equation}
% but 
with horizontal (symmetric) and vertical positive strains % measure are now newly 
defined % using $\bA_h$, $A_{cc}$ 
by
\beq
\label{eq:BhandBzz}
\bB_h=\bF_h\bA_h\bF_h^T
\quad
B_{zz}=H^2 A_{cc}>0
\,,
\eeq
% we obtain 
the mass, momentum and energy conservation laws lead to the % quasilinear 
system: % of \emph{conservation} laws:
\beq
\hspace{-.5cm}
\boxed{
\label{eq:SVUCM0}
\begin{aligned}
& \partial_t H + \div( H \bU ) = 0
\\
& \partial_t (H\bF_h) + \div( H \bU\otimes\bF_h-H\bF_h\otimes\bU ) = 0
\\
& \partial_t(H\bU) + \div(H\bU\otimes\bU + (g\tfrac{H^2}2  + GH^3 A_{cc})\bI - GH\bF_h\bA_h\bF_h^T ) = -KH\bU
\\
& \partial_t (H\bA_h) + \div( H \bU\otimes\bA_h ) = H(\bF_h^{-1}\bF_h^{-T}-\bA_h)/\lambda
\\ 
& \partial_t (HA_{cc}) + \div( H \bU A_{cc} ) =  H(H^{-2}-A_{cc})/\lambda % H(|\bF_h|^{2}-A_{cc})/\lambda
\end{aligned}
}
\eeq
which we term Saint-Venant-Maxwell or SVM in short.
\begin{proposition}
\label{prop:symhyp}
The quasilinear system of conservation laws \eqref{eq:SVUCM0} % equivalently 
written for $(H,H\bU,H\bF_h,HA_{cc}^{1/4},H\bA_h^{-2})$
with ($\bA_h^{-1}$ is $\bA_h^{-2}$ square-root matrix):
\beq
\begin{aligned}
\label{eq:SVUCM01}
& \partial_t (H\bA_h^{-2}) + \div( H \bU\otimes\bA_h^{-2} ) 
= H\bA_h^{-1}(\bI-\bA_h^{-1}\bF_h^{-1}\bF_h^{-T}+\bF_h^{-T}\bF_h^{-1}\bA_h^{-1})\bA_h^{-1})/\lambda \,, 
\\ 
% \label{eq:SVUCM02}
% & \partial_t (HA_{cc}^{-1}) + \div( H \bU A_{cc}^{-1} ) =  H(A_{cc}^{-1}-H^2A_{cc}^{-2})/\lambda \,,
& \partial_t (HA_{cc}^{1/4}) + \div( H \bU A_{cc}^{1/4} ) =  H(H^{-2}A_{cc}^{-3/4}-A_{cc}^{1/4})/4\lambda \,,
\end{aligned}
\eeq
is equipped (on neglecting the source) with a mathematical entropy $H\tilde E$ where
\beq
\label{eq:tildeE}
\tilde E = (|\bU|^2+gH)/2 + {G} \left( \tr(\bF_h\bA_h\bF_h^T) + H^2 A_{cc} \right)/2 \,.
\eeq
It is therefore \emph{symmetric hyperbolic}
% % \footnote{
% %  We recall that the symmetric-hyperbolic quasilinear systems -- i.e. conservation laws with a convex extension by Godunov Mock -- are one rare % single ??
% %  class of (hyperbolic) quasilinear systems for which multidimensional (smooth) solutions can be rigorously constructed \cite{benzonigavage-serre-2007},
% %  though not in a univoque way \cite{chiodaroli-delellis-kreml-2015}.
% % } 
on the convex % physical
admissibility domain
$$
%\Acal := \{H>0\,,\ \bU\in\R^2\,,\ \bF_h\in\R^4\,,\ \bA_h^{-1}=\bA_h^{-T}>0\,,\ A_{cc}^{-1}>0
\Acal := \{H>0\,,\ \bA_h^{-1}=\bA_h^{-T}>0\,, A_{cc}^{-1}>0\} \,.
$$
\end{proposition}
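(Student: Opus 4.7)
The strategy is to invoke the Godunov--Mock symmetrization theorem: a system of conservation laws endowed with an additional conservation law for a functional that is strictly jointly convex in the conservative unknowns is automatically symmetric hyperbolic on the domain of convexity. The proof thus splits into two verifications: (a) $H\tilde E$, viewed as a function of the conservative unknowns $(H,H\bU,H\bF_h,HA_{cc}^{1/4},H\bA_h^{-2})$, is strictly jointly convex on $\Acal$; and (b) smooth solutions of \eqref{eq:SVUCM0} supplemented with \eqref{eq:SVUCM01} (source set to zero) satisfy an extra conservation law $\partial_t(H\tilde E)+\div(\cdots)=0$.

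\emph{Strict convexity.} I split $H\tilde E$ into four pieces: the kinetic piece $|H\bU|^2/(2H)$, the gravitational piece $gH^2/2$, the vertical elastic piece $GH^3A_{cc}/2$, and the horizontal elastic piece $GH\tr(\bF_h\bA_h\bF_h^T)/2$. The first two are classical and are strictly convex in $(H,H\bU)$ (resp.\ in $H$) for $H>0$ by the standard perspective trick. The third one becomes, with $\eta:=HA_{cc}^{1/4}$, the scalar expression $G\eta^4/(2H)$, whose $2\times 2$ Hessian in $(H,\eta)$ is easily shown positive definite: this is precisely why the power $1/4$ has been chosen. The fourth rewrites, with $\bM:=H\bF_h$ and $\bD:=H\bA_h^{-2}$, as $(G/2)H^{-1/2}\tr(\bM\bD^{-1/2}\bM^T)$, i.e.\ as the perspective at scale $H$ of the map $(\bM,\bD)\mapsto\tr(\bM\bD^{-1/2}\bM^T)$; the joint convexity of this latter map on $\{\bD>0\}$ follows from the operator convexity of $\bD\mapsto\bD^{-1/2}$ on symmetric positive-definite matrices together with a Lieb-type joint-convexity lemma for quadratic forms $X\mapsto\tr(X^T\bD^{-\alpha}X)$, $0<\alpha\le 1$. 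The choice of the power $-2$ in $\bA_h^{-2}$ is tailored to this convexity. Summing the four pieces yields strict joint convexity of $H\tilde E$ on $\Acal$, which is itself convex as a product of the half-space $\{H>0\}$, the open cone of symmetric positive-definite matrices $\bA_h^{-2}$, and the positive half-line $A_{cc}^{-1}>0$.

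\emph{Additional conservation law.} I differentiate $H\tilde E$ in time using the chain rule and insert each of the conservation laws of \eqref{eq:SVUCM0}--\eqref{eq:SVUCM01}. The mass and momentum laws reproduce the Saint-Venant flux divergence and generate the work of the stress term $GH\bF_h\bA_h\bF_h^T-GH^3A_{cc}\bI$ against $\grad\bU$. The Piola--Kirchhoff law $\partial_t(H\bF_h)+\div(H\bU\otimes\bF_h-H\bF_h\otimes\bU)=0$, treated as an involution, gives the identity $(\partial_t+\bU\cdot\grad)(\bF_h\bA_h\bF_h^T)=\bL_h\bF_h\bA_h\bF_h^T+\bF_h\bA_h\bF_h^T\bL_h^T+\bF_h(\partial_t+\bU\cdot\grad)\bA_h\,\bF_h^T$, and a similar identity involving $H^2A_{cc}$ uses the mass law. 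The transport equations for $H\bA_h^{-2}$ and $HA_{cc}^{1/4}$ (or equivalently for $\bA_h$ and $A_{cc}$, via the explicit change of unknown) furnish the remaining terms so that each elastic-stress work cancels against the corresponding momentum flux contribution, leaving a divergence form. This is the direct analogue of the computation for polyconvex hyperelastodynamics.

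\emph{Conclusion and main obstacle.} Combining the strict joint convexity with the additional conservation law, the Godunov--Mock theorem yields symmetric hyperbolicity of \eqref{eq:SVUCM0}--\eqref{eq:SVUCM01} on $\Acal$. The delicate step is the joint convexity of the horizontal elastic piece $(G/2)H^{-1/2}\tr(\bM\bD^{-1/2}\bM^T)$: the specific powers in $\bA_h^{-2}$ and $A_{cc}^{1/4}$ are calibrated precisely so that each contribution of the internal energy becomes a perspective of an (operator-)convex function of the conservative variables, and verifying this identification for the matrix-valued variable rests on Lieb-type joint convexity together with a careful handling of operator square roots; with different powers, strict joint convexity generally fails. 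Once this step is settled, the remaining verifications are routine manipulations already familiar from isentropic compressible Euler and polyconvex elastodynamics.
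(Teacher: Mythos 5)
Your proposal is correct and shares the paper's overall architecture: both proofs rest on the Godunov--Mock theorem, reduce everything to the strict convexity of $H\tilde E$ in the conservative unknowns, and treat the companion conservation law as a routine verification (the paper dismisses it in one line; you at least note that the $H\bF_h$ equation only yields $D_t\bF_h=\bL_h\bF_h$ modulo the Piola involution, a point the paper also glosses over). The routes diverge in how the convexity is organized and, above all, in how the matrix block is justified. The paper first invokes the equivalence (citing Bouchut) between convexity of $H\tilde E$ in $(H,H\bU,H\bF_h,HA_{cc}^{1/4},H\bA_h^{-2})$ and convexity of $\tilde E$ in $(H^{-1},\bU,\bF_h,A_{cc}^{1/4},\bA_h^{-2})$ --- which is exactly your perspective trick, applied once globally instead of piecewise --- and then splits $\tilde E$ into $|\bU|^2/2$, the scalar part $gH+GH^2A_{cc}$ (a $2\times2$ Hessian computation in $(H^{-1},A_{cc}^{1/4})$, the counterpart of your $G\eta^4/(2H)$ piece), and $\tr(\bF_h\bA_h\bF_h^T)$. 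For that last piece the paper does not cite a Lieb-type theorem as you do: it proves midpoint convexity of $(\bF,\bY)\mapsto\tr(\bF\,\bY^{-1/2}\bF^T)$ by hand, using only the operator concavity of $\bY\mapsto\bY^{1/2}$ and an explicit expansion that exhibits a nonnegative defect term $\theta(1-\theta)\tr(\bD_\theta^T\bD_\theta)$, so the argument is self-contained and the source of strictness is visible. Your appeal to the joint convexity of $(\bM,\bD)\mapsto\tr(\bM\bD^{-1/2}\bM^T)$ (via the integral representation of $\bD^{-1/2}$ and the Schur-complement case $\alpha=1$) is a valid and arguably cleaner substitute, provided you really invoke the \emph{joint} convexity statement and not merely operator convexity of $\bD\mapsto\bD^{-1/2}$ at fixed $\bM$. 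One small imprecision to fix: the kinetic perspective $|H\bU|^2/(2H)$ is convex but \emph{not} strictly convex in $(H,H\bU)$ (it is affine along rays), so strict convexity of the sum must be argued by noting that the degeneracy directions of your four pieces intersect trivially on $\Acal$; the paper avoids this issue by splitting the reduced energy $\tilde E$ into summands that are strictly convex in complementary groups of the variables $(H^{-1},\bU,\bF_h,A_{cc}^{1/4},\bA_h^{-2})$.
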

% With our definition of $B$ and the usual laws in elastodynamics, it is straightforwardly seen that 
\begin{proof}
Since $H\tilde E$ obviously satisfies \eqref{eq:energyconservation} in the case without source,
it suffices to show that \eqref{eq:SVUCM0} is symmetric hyperbolic on % the set 
$\Acal$ (obviously convex), 
i.e. that $H\tilde E$ % which satisfies the conservation law
is \emph{stricly} convex with respect to a full % complete 
set of conserved variables, % in $\Acal$ 
using Godunov-Mock theorem \cite{godlewski-raviart-1996}.
Moreover, $H\tilde E$ is (strictly) convex in $(H,H\bU,H\bF_h,HA_{cc}^{1/4},H\bA_h^{-2})$
if and only if $\tilde E$ is (strictly) convex in $(H^{-1},\bU,\bF_h,A_{cc}^{1/4},\bA_h^{-2})$ \cite{bouchut-2003},
i.e. if $\tilde E_1 = gH + G \left( H^2 A_{cc} \right)$ and 
$\tilde E_2 = \tr(\bF_h\bA_h\bF_h^T)$ are %both 
(strictly) convex in $(H^{-1},A_{cc}^{1/4})$ and $(\bF_h,\bA_h^{-2})$
respectively, like $|\bU|^2/2$ in $\bU$.
Now, % on the one hand, 
the smooth function $\tilde E_1$ is strictly convex insofar as its Hessian matrix is % symmetric, 
strictly positive:
$$
\nabla^2_{H^{-1},A_{cc}^{-1}} \tilde E_1 = 
\begin{pmatrix}
2gH^3 + 6GH^4 A_{cc} & - 2G H^3 A_{cc}^{3/4}                
\\
- 2G H^3 A_{cc}^{3/4} & 2G H^2 A_{cc}^{1/2}
\end{pmatrix}
\,.
$$
% in the sense of symmetric matrices
%
On the other hand,
% by direct computation note that the quadratic form
% \begin{multline}
% d^2 E_2(\bF_h,\bA_h^{-1}) = 2 \tr\left( (d\bF_h-\bF_h\bA_h d\bA_h^{-1})^T(d\bF_h-\bF_h\bA_h d\bA_h^{-1})\bA_h \right)
% \\
%  + 2 \alpha \tr\left( (\bA_h d\bA_h^{-1})^T(\bA_h d\bA_h^{-1}) \right) \ge 0
%  \end{multline}
% writes, for all $(d\bF_h,d\bA_h^{-1})\in\R^4\times\R^4$,
% as the sum of the (positive) traces of the products of positive symmetric matrices but not strictly
consider two couples of matrix values $(\bF_1,\bY_1:=\bA_1^{-2})$ and $(\bF_2,\bY_2:=\bA_2^{-2})$ for $(\bF_h,\bA_h^{-2})$.
for any $\theta\in[0,1]$, using $\bF_\theta=\theta\bF_1+(1-\theta)\bF_2$, $\bY_\theta=\theta\bY_1+(1-\theta)\bY_2$ 
and $\bH_\theta = \theta\bF_1\bY_1^{-\frac14}+(1-\theta)\bF_2\bY_2^{-\frac14}$,
$\bD_\theta \bF_1\bY_1^{-\frac14}\bY_2^{\frac14}-\bF_2\bY_2^{-\frac14}\bY_1^{\frac14}$
it holds
\begin{multline}
% \bY_\theta^{\frac12} \ge (\theta\bY_1^{\frac12}+(1-\theta)\bY_2^{\frac12})
% \tr(\bY_\theta^{\frac12} (\bH_\theta)^T\bH_\theta ) = 
\tr(\bH_\theta \bY_\theta^{\frac12} (\bH_\theta)^T)
> \tr\left( \bH_\theta (\theta\bY_1^{\frac12}+(1-\theta)\bY_2^{\frac12}) (\bH_\theta)^T \right)
\\
= \tr\left( (\theta^2+\theta(1-\theta)\bY_1^{-\frac14}\bY_2^{\frac12}\bY_1^{-\frac14})\bF_1^T\bF_1 
+ ((1-\theta)^2+\theta(1-\theta)\bY_2^{-\frac14}\bY_1^{\frac12}\bY_2^{-\frac14})\bF_2^T\bF_2 \right)
\\
= \tr\left( \bF_\theta^T\bF_\theta + \theta(1-\theta) \bD_\theta^T \bD_\theta \right)
% (\bY_2^{\frac14}\bY_1^{-\frac14}\bF_1^T-\bY_1^{\frac14}\bY_2^{-\frac14}\bF_2^T)
% (\bF_1\bY_1^{-\frac14}\bY_2^{\frac14}-\bF_2\bY_2^{-\frac14}\bY_1^{\frac14})
\ge \tr( \bF_\theta^T\bF_\theta )
\end{multline}
hence $\tr(\bH_\theta (\bH_\theta)^T) > \tr( \bF_\theta \bY_\theta^{-\frac12} \bF_\theta^T )$
since $\bY_\theta^{-\frac12}$ is symmetric positive definite, 
% as well as \bY_\theta^{\frac12} (\bH_\theta)^T\bH_\theta-\bF_\theta^T\bF_\theta
% which it multiplies within the trace and
which is the desired result $\theta E_2(\bF_1,\bY_1) + (1-\theta) E_2(\bF_2,\bY_2)>E_2(\bF_\theta,\bY_\theta)$.
\end{proof}
Proposition~\ref{prop:symhyp} allows one to check that the system \eqref{eq:SVUCM0} makes sense as a % mathematical
flow model with any smooth source term, for small times % evolutions
at least.
% As a matter of
But in fact, the full SVM system \eqref{eq:SVUCM0} can also be shown \emph{thermodynamically compatible} i.e.:
\begin{corollary}
\label{cor:strong}
Given smooth initial conditions, the Cauchy problems for strong solutions to \eqref{eq:SVUCM0} are well-posed.
These strong solutions % naturally 
preserve the relation $H\equiv F^z_c=|\bF_h|^{-1}$.
They also satisfy the companion conservation law \eqref{eq:energyconservation} for the same free-energy % $E$ 
\eqref{eq:energy} as SVUCM where
the usual hydrostatic potential term $E_H=gH/2$ of Saint-Venant % in internal energy 
is complemented by the viscoelastic term 
\beq
\label{eq:viscoelasticpotential}
E_\Sigma \equiv \frac{G}2 \left( \tr(\bF_h\bA_h\bF_h^T) + H^2 A_{cc} - \ln(|\bF_h|^2\det\bA_h H^2 A_{cc}) \right)
\eeq
and by \emph{thermodynamically compatible} source terms, dissipating energy at rate:
\beq
\label{eq:dissipation}
D % \equiv \frac{G}{2\lambda} (\bB-\bI):(\bI-\bB^{-1}) = \frac{G}{2\lambda} \left(  \right)
\equiv G(\tr\bB_h+\tr\bB_h^{-1}-2\tr\bI+B_{zz}+B_{zz}^{-1}-1)/(2\lambda) > 0 \,.
\eeq
\end{corollary}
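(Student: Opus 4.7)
The plan is to derive all three claims from two ingredients: the symmetric-hyperbolic structure of Prop.~\ref{prop:symhyp}, and the fact that once the involution $H=|\bF_h|^{-1}$ is propagated, the quantities $\bB_h := \bF_h\bA_h\bF_h^T$ and $B_{zz} := H^2 A_{cc}$ satisfy exactly the SVUCM constitutive equations of \eqref{eq:SVUCM}. This reduces the energy balance and the dissipation formula to the classical SVUCM computation. \emph{Well-posedness} is immediate: by Prop.~\ref{prop:symhyp}, \eqref{eq:SVUCM0} rewritten in the variables $(H, H\bU, H\bF_h, HA_{cc}^{1/4}, H\bA_h^{-2})$ is a symmetric-hyperbolic system of conservation laws on the open convex domain $\Acal$, and all source terms in \eqref{eq:SVUCM0}--\eqref{eq:SVUCM01} are smooth on $\Acal$, so Kato--Majda theory yields local-in-time existence and uniqueness of a strong solution for smooth data valued in a compact subset of $\Acal$.

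\emph{Propagation of the involution.} Expanding the $H\bF_h$ conservation law and eliminating $\partial_t H$ via the mass equation gives, componentwise, $\partial_t F^i_A + U^j\partial_j F^i_A = F^j_A \partial_j U^i + (U^i/H)\,\partial_j(HF^j_A)$. The compatibility constraint $\partial_j(HF^j_A) \equiv 0$ (one for each reference index $A$) is itself transported by smooth flows: taking the divergence of the $H\bF_h$ equation and using mass conservation shows that $\partial_j(HF^j_A)$ satisfies a homogeneous linear transport equation, hence vanishes on $[0,T)$ whenever it vanishes initially. The $\bF_h$ equation then reduces to the classical \eqref{eq:defgradienth}, so by Jacobi's formula $\det\bF_h$ satisfies $D_t\det\bF_h = \det\bF_h \,\div\bU$, which is also the transport equation for $H^{-1}$ obtained directly from the mass equation. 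Uniqueness of smooth scalar transport then yields $H \equiv |\bF_h|^{-1}$ throughout $[0,T)$ whenever it holds initially.

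\emph{Energy balance and dissipation.} A direct chain-rule computation on $\bB_h = \bF_h\bA_h\bF_h^T$ using \eqref{eq:defgradienth} and \eqref{eq:Ah} gives $D_t \bB_h - \bL_h\bB_h - \bB_h\bL_h^T = (\bI - \bB_h)/\lambda$; similarly $B_{zz} = H^2A_{cc}$ satisfies $D_t B_{zz} + 2 B_{zz}\,\div\bU = (1-B_{zz})/\lambda$ via \eqref{eq:Acc} and mass conservation. Modulo the involution, \eqref{eq:viscoelasticpotential} coincides with the SVUCM internal energy \eqref{eq:internalenergy}. Contracting these evolution equations with $\partial e/\partial\bB_h = \tfrac{G}{2}(\bI - \bB_h^{-1})$ and $\partial e/\partial B_{zz} = \tfrac{G}{2}(1 - B_{zz}^{-1})$ and combining with the mass and momentum balances produces the conservative flux of \eqref{eq:energyconservation}, while the relaxation sources yield exactly $-HD$ with $D$ as in \eqref{eq:dissipation}. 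Positivity of $D$ follows from the elementary inequality $x + 1/x \ge 2$ for $x>0$, applied to the spectrum of $\bB_h$ and to $B_{zz}$.

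The principal technical obstacle is the propagation of the compatibility constraint $\partial_j(HF^j_A) = 0$: this is the classical delicate point in conservative formulations of elastodynamics, but transfers here verbatim from Hookean elasticity because the $H\bF_h$ conservation law in \eqref{eq:SVUCM0} takes the standard elastodynamics form. Once this is secured, the free-energy balance and the explicit form of $D$ are algebraic consequences of the SVUCM calculation.
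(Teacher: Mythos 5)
Your proposal is correct and follows essentially the same route as the paper's (much terser) proof: well-posedness from the symmetric hyperbolicity established in Proposition~\ref{prop:symhyp}, propagation of the involution $H\equiv|\bF_h|^{-1}$, and recovery of the SVUCM evolution of $\bB_h=\bF_h\bA_h\bF_h^T$ and $B_{zz}=H^2A_{cc}$ so that the SVUCM energy identity and the dissipation rate transfer verbatim. Your explicit propagation of the Piola constraints $\partial_j(HF^j_\alpha)=0$ — which the paper compresses into ``one can directly check that $|\bF_h|^{-1}$ follows the same evolution as $H$'' — is a welcome precision, since reducing the $H\bF_h$ conservation law to $D_t\bF_h=\bL_h\bF_h$ genuinely requires them.
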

% for x>0:
% x-\alpha/x+\alpha-1: strictly increasing if $\alpha>0$, with value zero at $x=1$ then has a (positive) % non-strictly
% x-\alpha/x+\alpha-1: strictly convex if $\alpha\le0$, with a single minimum $\alpha-1$ at $x=sqrt|\alpha|$ then
% positive if $\alpha\le0$ and  |2x-(1-\alpha)|\ge|1+\alpha=sqrt(Delta)|:x\in(-\alpha,1) or $\alpha>0$, $x>=1$
\begin{proof}
Corollary~\ref{cor:strong} is a % straightforward 
consequence of Proposition~\ref{prop:symhyp}:
the well-posedness of Cauchy problems for strong solutions to \emph{symmetric hyperbolic} systems is classical, see e.g. \cite{benzonigavage-serre-2007}.
% For a notion of entropy (weak) solution satisfying % the inequality 
% \eqref{eq:SVHE}, one would like to find a convex (sub) set $\Acal^1$ where $D\ge0$. But ...
Then, given smooth solutions, one can directly check that $|\bF_h|^{-1}$ follows the same evolution as $H$.
The SVUCM system can be % (formally) 
retrieved exactly using \eqref{eq:BhandBzz}, \eqref{eq:Ah}, \eqref{eq:Acc}.
This is the reason why the same companion conservation law \eqref{eq:energyconservation} % straightforwardly
holds for \eqref{eq:SVUCM0} as for SVUCM,
where the source term $D$ is thermodynamically compatible -- and also an upper bound for $E_\Sigma$, thus $E$ ! % Gromwall
\end{proof}

Some comments about the structure of SVM are now in order.

\smallskip

First, note that we have not been able to find
% identified 
a full set of % dependent 
conserved variables % for the SVM system 
such that the free-energy $HE$ is convex \emph{strictly} on the whole admissible domain $\Acal$
when % $E$ given by
it is defined as in \eqref{eq:energy} % E = (U^2+V^2)/2+E_H+E_\Sigma % \left( |\bU|^2 + gH + G \tr(\bC-\ln\bC-\bI) \right)/2 \,,
by % the internal energy 
$e \equiv E_H+E_\Sigma$, $E_H=\frac{g}2H$ % |\bF_h|^{-1}
% like in \eqref{eq:energypolyconvex} 
and \eqref{eq:viscoelasticpotential} for $E_\Sigma$.
This is why we use $H\tilde E$ rather than $HE$ to show that the SVM system is symmetric hyperbolic and thus a good model, with well-posed Cauchy problems.
%
% one can only consider regions with values in subsets of $\Acal$ which are actually invariant,
% but defined by a % complex 
% nonlinear relationship which can hardly be used in standard constructions of numerical/discrete solutions
% (unlike the fact that $E_\Sigma$ in \eqref{eq:internalenergy} is a convex function of $\bB_h=\bB_h^T>0,B_{zz}>0$)
%
% Global convexity of $E$ % with repsect to conserved variables
% seems incompatible with thermodynamical compatibility.
% And the various convexity regions do not seem invariant by the dynamics -- because of the ``coupling'' term !!
%
But we insist on the importance of $HE$ with $E_\Sigma$ defined by \eqref{eq:viscoelasticpotential}.
It allows one to show that the source terms in SVM which formally yield % lead to 
Navier-Stokes equilibrium asymptotically when $\lambda\to0$ are thermodynamically compatible. % with the free-energy \eqref{eq:energy}.
%
% We have not been able to show that the source terms that lead SVM to %formally 
% contain Navier-Stokes equilibrium asymptotically are thermodynamically compatible with any other free-energy than \eqref{eq:energy} % where $E_\Sigma$ is defined by \eqref{eq:viscoelasticpotential}
% on the contrary.
%
A result like Corrolary~\ref{cor:strong} is necessary, % indeed to discuss
to justify % the quest for 
% the construction % well-defined existence, let alon (weak-strong) uniqueness ????
% of \emph{entropy} % (entropy) \emph{measure-valued} weak 
weak solutions % in $\Acal^1$ to \eqref{eq:SVUCM0} in the distributional sense, % for general data
% which additionally % also 
satisfying the \emph{inequality}:
% an inequality of Clausius-Duhem type formulating the second thermodynamics principle 
% a second thermodynamics principle formulation 
% on noting that the Right-Hand-Side (RHS) in \eqref{eq:energyconservation} is negative
\begin{multline}
\label{eq:SVHE}
\partial_t(HE) + \partial_x\left( HEU + H(P+\Sigma_{zz}-\Sigma_{xx})U - H\Sigma_{xy}V \right) 
\\
+ \partial_y\left( HEV - H\Sigma_{yx}U + H(P+\Sigma_{zz}-\Sigma_{yy})V \right) \le -KH |\bU|^2 - HD
\end{multline}
as % a formulation of the 
second thermodynamics principle complementing \eqref{eq:SVUCM0} for time-evolution with irreversible processes
(the non-convexity of the free-energy is physical in presence of a source which modifies the energy landscape at a fixed temperature).
% entrpoy of a ``bath'' in canonical ensemble : ex james mathis one has phase transitions
% physically, this admissibility criterion implies dissipation of the Helmholtz free-energy \eqref{eq:energy} into microscopic fluctuations, 
% for flows out of a steady equilibrium $\bU=\bzero$, $\bB_h=\bI$, $B_{zz}=1$ with uniform $H>0$.
% Moreover, the estimate \eqref{eq:SVHE} ensures some stability to numerical solutions 
% (similarly to the standard elastodynamics system for hyperelastic materials with polyconvex energy $HE$ \cite{demoulini-stuart-tzavaras-2012},
% retrieved formally in the (2D) shallow-water regime from SVM when $\lambda\to\infty$)
% at least, it ensures the preservation of the physically-admissible convex set $B_{zz},\bB_h=\bB_h^T\ge0$ see also Rem.~\ref{rem:limit}.
%
% It however remains a difficulty at the numerical level when $HE$ is not convex with respect to a full set of conserved variables (in fact, it is !!)
%
% $E$ is convex in 
%
% H^{-1}, A_{cc}^{-1} if B_zz = H^{2}A_{cc} = 1/(x^2*y)> 1 >1/4
% hessian x^(-2)*y^(-1)-log(x^(-2)*y^(-1)) : 2(x^2*y-1)(x^2*y-4)/x^6y^4
% H^{-1}, A_{cc}^{alpha} with alpha large enough if B_zz = H^{2}A_{cc} < 1 ?
% 
% $\bF_h,\bA_h$ if $B_{zz}<1$ and the eigenvalues of $\bB_h$ are smaller than one.
% $\bF_h,\bA_h^{-1}$ if $B_{zz}>1$ and the eigenvalues of $\bB_h$ are larger than one

\smallskip

Second, recall that the SVUCM system can be % (formally) 
retrieved exactly from (smooth solutions) to SVM using \eqref{eq:BhandBzz}, \eqref{eq:Ah}, \eqref{eq:Acc} (cf. % the proof of 
Corr.~\eqref{cor:strong}).
Now, the SVUCM system with % strong solutions 
$\bB_h\equiv\bF_h\bA_h\bF_h^T$, $B_{zz}=H^2 A_{cc}$ is a closed Galilean-invariant % rotation-invariant
quasilinear system which is \emph{hyperbolic} (strongly, see App.~\ref{app:hyperbolicity}).
The Jacobian matrix (say for 1D waves along $\be_x$) is diagonalizable. The 7 eigenvalues
$$
\lambda_0 = \ux \ (\times 3)\,, % \text{ (with multiplicity 3),}
\
\lambda_{1\pm} = \ux \pm \sqrt{G\sx}
\,,\
\lambda_{2\pm} = \ux \pm \sqrt{g\ro+G(3\sx+\sz)} \,,
$$
with as many eigenvectors are also 7 eigenvalues of the extension termed SVM. 
% in Eulerian description
Moreover, the eigenvalue $\lambda_0 = \ux$ % associated with linearly degenerate fields !! interesting ??
is of multiplicity $5$ at least for SVM, when using $\bA_h$ as independent variable % in addition to $\bB_h$ % \equiv\bF_h\bA_h\bF_h^T =H^2 A_{cc}$
($B_{zz}$ is equivalent to $A_{cc}$ as long as $H=|\bF_h|^{-1}$).
In fact, being symmetric hyperbolic, our SVM system above is also strongly hyperbolic when ``freezing'' the Jacobian:
it is easily seen that $0$ is the remaining (real) eigenvalue of SVM % still unknown at this stage
(above, in Eulerian coordinates), with multiplicity 2.
So, in general, % eigenvalues are not well-ordered and distinct! 
\emph{resonance} can occur \cite{isaacson-temple-1992,bouchut-2004} 
as well as numerical difficulties like for % pertaining to the case of 
systems % of conservation laws
with discontinuous % coefficients
flux, see e.g. \cite{MR1278994}. % MR2056868 MR693874 Lyons !! MR3347397 MR1710013 
This difficulty was not apparent in our previous work on the (closed) 1D subsystem \cite{bouchut-boyaval-2013}
but it is inline with standard results in multidimensional elastodynamics (the case $\bA_h=\bI$, $A_{cc}=1$),
see e.g. \cite{wagner-2009,kluth-despres-2010} and references therein,
% two different Eulerian formulations : Sharp and Plohr [30] where the Lagrangian formulation implies the Eulerian one, only,
% while Godunov and Romenskii [17] also reciprocal [46] proves equivalence of both formulations in the sense of
and possibly with the numerical difficulties observed close to vacuum $H=0$ in our previous 1D works on SV(UC)M \cite{boyaval-2018}.

% Note that formally, when $\lambda\to0$,
% one retrieves a standard shallow-water model as with SVUCM, % of Saint-Venant 
% with the standard formula \eqref{eq:viscoussteadystate} for the Cauchy stresses
% when the limit is actually considered for the term entering the momentum equation.

\smallskip

% More generally, 
Last, for relevant applications to a specific context, % it is important to 
one may still want to precise % more 
the physical interpretation of the new state variable $\bA$.
% This can be done in details only elsewhere, for the sake of clarity.
At present, we think of it as a material parameter that simply follows the flow at ``zero-temperature'',
and that relaxes to balance mechanical deformations otherwise.
For application to polymer suspensions, one may think of it as a mean-field approximation, see Rem.~\ref{rem:kinetic} for % indications about an application to polymer flows, with 
a % kinetic 
``micro-macro'' interpretation.
More generally, $\bA$ is some kind of viscous strain measuring the \emph{distortion of the % representative ? material
volume elements} (or equiv. \emph{microstructure deformations}) in a coordinate system attached to a reference configuration.
This is mathematically different from, but similar in spirit to, the \emph{tensor} state variable % also denoted 
$\bA$ introduced in \cite{Peshkov2016}
(governed by the non-conservative equations \eqref{eq:Sigmah},\eqref{eq:Sigmazz} like the elastic strain measure $\bB=\bF\bA\bF^T$ there).
%
% for the sake of clarity 
% we % need to 
% also keep to future works a % more 
% thorough discussion of the % various
% possible physical interpretations 
% that one can attach to $\bA$.
% For the moment, 
% for a better understanding of
% to better grasp the new system, % physically and mathematically, to prepare for numerical discretization, it is useful to
%
In any case, the fact that $\bA$ describes material % physical 
properties of the flows % fluid (and not macroscopic deformations associated with the flow of $\bU$)
can be entlightened on writing SVM with a Lagrangian description,
when a bijective flow map $\phi:(t,x,y)\to(t,a,b)$ between the current and reference configurations
is well-defined with % inverse $\phi^{-1}$ of velocity 
$\bU\equiv\partial_t\phi^{-1}$ and % deformation gradient 
$\bF\equiv\grad\phi^{-1}$.
Recall 
\beq\label{eq:SVUCM0detail}
\hspace{-.3mm}\begin{aligned}
& \partial_t H + \partial_{j}( H U^j ) = 0
\\
& \partial_t (HF^i_\alpha) + \partial_j( H U^j F^i_\alpha - H F^j_\alpha U^i ) = 0
\\
& \partial_t (H U^i) + \partial_j( H U^jU^i + (g\tfrac{H^2}2 + GH^3 A_{cc})\delta_{i=j} - GH F^i_\alpha A_{\alpha\beta} F^j_\beta) = -KH U^i
\\
& \partial_t (H A_{\alpha\beta}) + \partial_j( H U^j A_{\alpha\beta} ) 
= H ( |\bF_h|^{-2} \sigma_{\alpha\alpha'}\sigma_{\beta\beta'} % \delta_{\alpha\neq\alpha'}\delta_{\beta\neq\beta'}
F^k_{\alpha'} F^k_{\beta'} -A_{\alpha\beta})/\lambda
\\ 
& \partial_t (HA_{cc}) + \partial_{j}( H U^j A_{cc} ) =  H(H^{-2}-A_{cc})/\lambda
\end{aligned}
\eeq
is the % detailed 
Eulerian description \eqref{eq:SVUCM0} % in the current configuration
% of (smooth) flows 
for % components 
$(H,F^i_\alpha,U^i,A_{\alpha\beta},A_{cc})$ ($i,j\in\{x,y\} ; \alpha,\beta\in\{a,b\}$) in a Cartesian basis
where % we have identified
% the two-tensors
$\bF_h = F^i_\alpha \be_i\otimes\be_\alpha$ and $\bA_h = A_{\alpha\beta} \be_\alpha\otimes\be_\beta$ 
% with % $d=2$-dimensional
% matrices, 
so, denoting $\sigma_{xy}=1=-\sigma_{yx}$, it holds $\bF_h^{-1}=|\bF_h|^{-1} (\sigma_{ij}\sigma_{\alpha\beta} F^j_\beta) \be_\alpha\otimes\be_i$
% we used Einstein convention for repeated indices,
with $|\bF_h|= \sigma_{ij}\sigma_{\alpha\beta} % = \delta_{i\neq j}\delta_{\alpha\neq\beta} where $\delta_{a\neq a}=0=\delta_{b\neq b}$, $\delta_{a\neq b}=1=-\delta_{b\neq a}$ 
F^{i}_{\alpha} F^{j}_{\beta}$.
% $[\bF_h^{-1}]^i_\alpha = |\bF_h|^{-1} \delta_{\alpha\neq j}\delta_{i\neq\beta} F^j_\beta$
% [CofF]^i_\alpha = \delta_{i\neq j}\delta_{\alpha\neq\beta} F^j_\beta with i for lines and alpha for column
Then, an equivalent\footnote{In the case of sufficiently smooth solutions % for the flow map to be a bi-Lipschitz diffeomorphism 
\cite{wagner-1994,wagner-2009}.} 
Lagrangian description holds using % (using Einstein convention for repeated indices like in \eqref{eq:SVUCM0detail})
\begin{equation}
 \label{eq:piola}
\partial_{\alpha}( \sigma_{\alpha\beta} F^k_\beta ) = 0
\quad
\forall \gamma
\quad 
\text{ or }
\quad
\partial_{j}( H F^j_\gamma ) = 0 
\quad 
\forall k 
\,,
\end{equation}
if $H|\bF_h|=1$ i.e. the so-called \emph{Piola's identities}, % (see e.g.~\cite{wagner-2009})
which reads % with a \emph{conservative} form
: % (as long as the flow map satsfiesPiola's identity -- weakly !) equivalent to the Eulerian one
\beq
\label{eq:SVUCM0detaillag}
\begin{aligned}
& \partial_t H^{-1} - \partial_{\alpha}( U^j \sigma_{jk}\sigma_{\alpha\beta} % \delta_{j\neq k}\delta_{\alpha\neq\beta} 
F^k_\beta ) = 0 
\\
& \partial_t F^i_\alpha - \partial_\alpha U^i = 0
\\
& \partial_t U^i + \partial_\alpha \left( (gH^2/2 + GH^3 A_{cc}) \sigma_{ij}\sigma_{\alpha\beta} %\delta_{j\neq k}\delta_{\alpha\neq\beta} 
F^j_\beta - G F^i_\beta A_{\beta\alpha} \right) = -K U^i
\\
& \partial_t A_{\alpha\beta} = ( |\bF_h|^{-2}\sigma_{\alpha\alpha'}\sigma_{\beta\beta'} %\delta_{\alpha\neq\alpha'}\delta_{\beta\neq\beta'}
F^k_{\alpha'} F^k_{\beta'} -A_{\alpha\beta})/\lambda
\\ 
& \partial_t A_{cc} = (H^{-2}-A_{cc})/\lambda
\end{aligned}
\eeq
while % the inequality 
\eqref{eq:SVHE} 
for $E =\frac12\left( \sum_i|U^i|^2 + gH + G F^i_\alpha A_{\alpha\beta} F^i_\beta + G H^2 A_{cc} - \log(A_{cc}|\bA_h|) \right)$
\begin{multline}
\label{eq:SVHEeul}
% \partial_t( H E ) + \partial_k\left( 
% %  U^i (HE + \frac{g}2H^2 + GH^3 A_{cc}) \sigma_{ij}\sigma_{\alpha\beta} F^j_\beta F^k_\alpha - G U^i F^i_\beta A_{\alpha\beta} F^k_\alpha = 
% % using \sigma_{jk}\sigma_{kj}=\delta_{j\neqk}
%  U^i \left( HE + \left( \frac{g}2H + GH^2 A_{cc} \right) \delta_{i=k} % \sigma_{ij}\sigma_{jk}\sigma_{\alpha\beta} F^j_\beta F^k_\alpha => single term k=i ;  
%  - G F^i_\beta A_{\alpha\beta} F^k_\alpha \right)
% \right) \le -K|\bU|^2 -D
\partial_t( H E ) + \partial_j\left( 
 U^i \left( HE + \left( \frac{g}2H + GH^2 A_{cc} \right) \delta_{i=j} % \sigma_{ij}\sigma_{jk}\sigma_{\alpha\beta} F^j_\beta F^k_\alpha => single term k=i ;  
 - G F^i_\alpha A_{\alpha\beta} F^j_\beta \right)
\right) 
\\ 
\le -KH|\bU|^2 -HD 
\end{multline}
(simplified with $H|\bF_h|=1$) also has a conservative Lagrangian equivalent:
\beq
\label{eq:SVHElag}
\partial_t E + \partial_\alpha\left(  U^i \left( (\frac{g}2H^2 + GH^3 A_{cc}) \sigma_{ij}\sigma_{\alpha\beta} F^j_\beta - G F^i_\alpha A_{\alpha\beta} \right) \right)
\le -K|\bU|^2 -D
\eeq
with the same dissipation $D>0$ given in \eqref{eq:dissipation}.
Introducing $\Gcal^i_\alpha= G F^i_\beta A_{\alpha\beta}$, % \Gcal^y_a\equiv G F^y_\alpha A_{\alpha a}
$$
%{\small 
\Vcal_\alpha = U^i \sigma_{\alpha\beta}\sigma_{ij} F^j_\beta % = - U^xF^y_b +  U^yF^x_b if alpha = a 
\,,\quad
\Pcal^i_\alpha = \Pcal \sigma_{\alpha\beta}\sigma_{ij} F^j_\beta - \Gcal^i_\alpha
\,,\quad
\Pcal= \frac{gH^2}2 + GH^3 A_{cc}
\,,
%}
$$
we obtain a simple reformulation of (the 3 first lines of) \eqref{eq:SVUCM0detaillag} 
and % a shorthand shortcut notation useful for more clarity when discretizing
\eqref{eq:SVHElag}
as: % with $E$
\beq
\label{eq:SVUCM0detaillagbis}
\begin{aligned}
& \partial_t H^{-1} - \partial_{\alpha} \Vcal_\alpha = 0
\\
& \partial_t F^i_\alpha - \partial_\alpha U^i = 0
\\
& \partial_t U^i + \partial_\alpha \Pcal^i_\alpha = -K U^i
% \\
% & \partial_t A_{\alpha\beta} = ( |\bF_h|^{-2}\sigma_{\alpha\alpha'}\sigma_{\beta\beta'} %\delta_{\alpha\neq\alpha'}\delta_{\beta\neq\beta'}
% F^k_{\alpha'} F^k_{\beta'} -A_{\alpha\beta})/\lambda
% \\ 
% & \partial_t A_{cc} = (H^2-A_{cc})/\lambda
\end{aligned}
\eeq
\beq
\label{eq:SVHElagbis}
\partial_t E + \partial_\alpha\left(  U^i \Pcal^i_\alpha \right) \le -K|\bU|^2 -D \,.
\eeq
which can now be easily compared to the usual Lagrangian formulation of elastodynamics \cite{dafermos-2000,wagner-2009} (see Rem.~\ref{rem:lagrangian1}):
$G\bA_h$, $GA_{cc}$ can be understood as variable % time-dependent 
anisotropic elastic properties, which induce a viscous behaviour through friction on a time-scale $\lambda\to0$
inline with Maxwell ideas \cite{Maxwell01011867,maxwell-1874,poisson-1831}.
Furthermore, like in standard elastodynamics, 
% Bathe1996 mentions different static variational principles possible Hu-Washizu or Hellinger-Reissner
the Lagrangian equations above should be useful for variational calculus with SVM (see e.g.~\cite[Lecture 2]{marsden-1981}) % simo-tarnow algorithm
% \cite{marsden-hughes-2012} kato2011 and kato-hughes-marsden-1978
% to (formally, at least) establish the model from a variational principle 
% MR820572 schochet and klainerman-kohn  MR0768069-dafermos-hrusa-1985.pdf
% hrusa-renardy-1988
% C. TRUESDELL & W. NOLL, The nonlinear field theories of mechanics, in: S. FLOGGE (ed.), Handbuch der Physik III/3, Springer 1965.
as well as for numerical approximation (see e.g. \cite{MR2337740,DespresMazeran2005} % Hui
and our last section). % red YAVARI ?? also simo-tarnow algorithm and Simo1988, {The {H}amiltonian structure of nonlinear elasticity: the material and convective representations of solids, rods, and plates},
Note however that, also like in standard elastodynamics, the % equivalence
link between the Eulerian and Lagrangian formulations requires $H|\bF_h|=1$ as well as \eqref{eq:piola}.
While this is known to hold for all times % with \eqref{eq:SVUCM0detaillagbis} or \eqref{eq:SVUCM0detaillag}, in the weak sense
if it holds initially at a continuous level (this is called an involution),
it is a well-known difficulty % to satisfy numerically 
at a discrete level \cite{kluth-despres-2010}. % whatever the choice of variable
% can we show it is actually _necessary_ to satisfy entropy ?

% although they are only weakly hyperbolic : one eigenvalue misses an eigenvector here -- because of incompressibilty ? !!!

\mycomment{
Finally, although the new state variable $\bA$ can have a precise physical interpretation only in context
(flows with dilute particles % in heterogeneous configurations 
for instance, recall Rem.~\ref{rem:kinetic}), % or vortices !
it will appear below that it can have a generic % impact
mathematical influence on solutions whatever the context, for the definition % through
of % numerical approximations 
discretizations in particular on noting \eqref{eq:rotationinvariantflux} (see Sec.~\ref{sec:oned}).
}

\mycomment{We use $\bF$ as variable for discretization, and not $\bG$ (a bijective nonlinear transformation of $\bF$), 
because energy conservation is a priori more naturally expressed in Lagrangian setting for $\bU$ and $\bF$,
but $\bG$ might still be useful -- in the Eulerian setting, to build an ALE mapping ?? --}

\begin{remark} % Note that 
\label{rem:kinetic}
The system \eqref{eq:SVUCM0} cannot be retrieved from
% does not comply % exactly 
% with 
the standard kinetic interpretation in statistical physics,
when non-Newtonian stresses $\bSigma_h=G(\bB_h-\bI)$ and $\Sigma_{zz}=G(B_{zz}-1)$ are % (components of) the mean % averaged % contributions to the total stress 
due to % a homogeneous / uniform concentration of
Brownian % (i.e. random) 
elastic ``dumbbells'' diluted in a fluid suspension 
with conformation matrix % identified with the standard rank-one conformation matrix 
$\bB=\EE(\bR\otimes\bR)$, $\bR(t,\bx)$ being a random end-to-end \emph{vector} solution to an overdamped Langevin equation % (a simple model for suspended polymers)
%  $\bR\otimes\bR = \bF\bA\bF^T$ if and only if $\det\bA\equiv0$ as long as $\bF$ is the invertible deformation gradient of a smooth motion,
%  thus if and only if $\det\bA_h\equiv0$ with $\bF_h$ invertible, $F_{zz}\neq0$
%  in the 2D case of the shallow-water regime (see also the physical discussion in \cite{bouchut-boyaval-2013} for that kinetic interpretation in that regime)
\cite{ottinger-1996-Book,doi-edwards-1998}.
% for instance -- i.e. $\tr\bB_h^{-1}\equiv\tr\bI$ on noting $D_t |\bA_h| = |\bA_h|(\tr\bB_h^{-1}-\tr\bI)/\lambda$ ? no but 
% \bA_h \ to \bF_h^{-1}\bF_h^{-T} and \det(\bF_h^{-1}\bF_h^{-T})=1/|\bF_h|^2=H^2
However, in a similar spirit, one could interpret $\bA_h$ and $A_{cc}$ as the mean-field approximations
% \emph{covariances} and the variance of the uncorrelated vertical component
$\mathbb{E}(\bG_h\bG_h^T)$ and $\mathbb{E}(|G^c_{z}|^2)$ of % (matrix and scalar) 
stochastic % (It\^o) 
processes % with multiplicative noise
:
\begin{equation}
\label{eq:langevin1} 
d \bG_h = \left( - (\bU\cdot\grad) \bG_h - \frac2{\lambda} \bG_h \right) dt + \frac1{2\sqrt\lambda} \bF_h^{-1} d\bW_h(t) % \,,
\end{equation}
\begin{equation}
\label{eq:langevin2} 
d G^c_{z} = \left( - (\bU\cdot\grad) |G^c_{z}| - \frac2{\lambda} |G^c_{z}| \right) dt + \frac1{2\sqrt\lambda} H^{-1} d W^c_z(t) % \,.
\end{equation}
% where $(\bW_t)$ is a Brownian force field % (independent white noise in each direction)
% pointwise in the fluid 
% yielding \emph{exactly}
% \begin{align}
% \label{eq:Ch}
% D_t \bB_h - \bL_h\bB_h - \bB_h \bL_h^T % + \zeta (\bD_h\bB_h + \bB_h \bD_h^T) 
% = (\bI-\bB_h)/\lambda \,,
% \\
% \label{eq:Cz}
% D_t B_{zz} + 2(1-\zeta)B_{zz} (\partial_xU + \partial_y V) % + 2 (1-\zeta) B_{zz} \div\bU 
% = (1-B_{zz}) / \lambda \,,
% \end{align}
% i.e. SVUCM~\eqref{eq:SVUCM} % when $\zeta=0$,
modelling (relaxation of) the % time-dependent anisotropic 
elastic microstructure % as a metric like in .... !!!! <<<<<<<<<<<<<<<<<<< TBC TBC 
with thermal fluctuations. % weighting the forcing.
% a body tensor field characteristics of the fluid materials'molecular structure \cite{lodge-1974} ??
\end{remark}

\mycomment{
Can one justify the dissipative mean-field theory from a % (rigorous -- approximate ? --) 
variational principle, stochastic then a priori ?
Why should noise act on the deformation gradient then ? (recall it acts on velocity, not position in Langevin theory)

In any case, the interpretation is the door open to many variations: 
different Jaumann derivative if we consider a formulation of the stress in $\bB^{-1}$ ?
% link with \cite{Lattanzio2006} where the stress equation formula relax ??
% (see also \cite{Favrie2864} for a physical motivation to let stress relax, in a more heuristical approach) ??
FENE-P ?
}

% nonlinearhyperbolic-1987 % <<<<<<<<<<<<<<< very nice proceedings <<<<<<<<<<<<<<<<<

\begin{remark}[Direct derivation of the 
model in  
Lagrangian description]
\label{rem:lagrangian1}
The Lagrangian formulation \eqref{eq:SVUCM0detaillag} of our model could also be straightforwardly derived as
the 2D visco-elastodynamics of a hyperelastic incompressible continuum with internal energy \eqref{eq:internalenergy} 
in a coordinate system attached to a reference configuration (a Lagrangian description), 
see e.g. \cite{wagner-2009,dafermos-2000,zbMATH01293055-qin-1998}.
Like the Eulerian formulation, it can be seen as a formal thin-layer approximation of the % full 
3D (visco)elastodynamics of an incompressible continuum % similarly hyperelastic 
with memory 
% internal energy \eqref{eq:internalenergy}
% $$
% E_\Sigma = \frac{G}2 \left( \tr(\bF\bA\bF^T) - \ln\det\bA \right)
% $$
and a free-surface % no-traction boundary condition
when the terms containing $F^z_\alpha$ are negligible: the shallow-water regime.
\end{remark}

\mycomment{JUSTIF BY VAR PRINCIPLE~?}
 
\begin{remark}[Hyperbolic models of viscoelastic flows] % for Maxwell fluids and other
\label{rem:similarity}
In introduction, we have already mentionned that we were aware of a few other interesting hyperbolic models of viscoelastic flows for Maxwell fluids.
The 2D model for slightly compressible flows in \cite{PHELAN1989197,EDWARDS1990411}, see also \cite{olsson-ystrom-1993,Guaily2010158,Guaily2011258} for simulations,
is similar to \eqref{eq:SVUCM} (though it does not conserve mass).
It is hyperbolic under the same physically-natural conditions $H,B_{zz},\tr\bB_h,\det\bB_h\ge0$.
Another % interesting 
hyperbolic model for 3D flows of Maxwell fluids is in \cite{Peshkov2016}.
Although similar in spirit, noet that it does not seem to explicitly compare with our new proposition.
Last, note that one can derive another hyperbolic closed subsystem of SVM, similar but different from the SVUCM equations \eqref{eq:SVUCM},
on choosing $-\bF_h$ as a conservative variable in \eqref{eq:SVUCM0}. The energy functional remains the same however.
Without source terms, that model was discovered independently by \cite{Teshukov2007}.
It can be interpreted as a building block for Reynolds-avergaged turbulence models in Saint-Venant framework
(see SVTM in \cite{boyaval-hal-01661269}, and references therein).
\end{remark}

% In our incompressible free-surface flow context, the improvement is obtained on decomposing 
% % the introduction of a free-surface and assuming a stratified flow
% % (i.e. vertical acceleration is neglected and a vertical velocity can be reconstructed from mass conservation),
% % with 
% the % (specific) 
% pressure term $P+\Sigma_{zz}$ 
% into a hydrostatic component $P$ plus an elastic component $\Sigma_{zz}$, so that it holds
% $$
% D_t (P+\Sigma_{zz}) + \left( P + \frac{2}G \Sigma_{zz} + 2 \right) \div\bU = 0
% $$
% to be compared with 
% $
% D_t (P+\Sigma_{zz}) + \gamma \left( P + \Sigma_{zz} \right) \div\bU = 0
% $
% in \cite{PHELAN1989197,EDWARDS1990411,Guaily2010158,Guaily2011258}.

\section{Discretization by a Finite-Volume % schemes for % solutions to Cauchy problems of any of the two new viscoelastic Saint-Venant % systems
method}
\label{sec:FV}

To investigate quantitatively the % full multidimensional meaning % properties 
features of % the system
\eqref{eq:SVUCM0} % equiv. \eqref{eq:SVUCM0detail} % Eulerian description 
as a model 
% with zero retardation time !!
% defined in the previous section % by the conservation laws 
for viscoelastic flows % with a free-surface and
(under gravity in the shallow-water regime), we need to define precisely how to compute solutions.
To that aim, we consider \emph{discrete} solutions to Cauchy % initial-value 
problems % for SVM 
% in Eulerian coordinates
using a \emph{Finite-Volume} (FV) method % which is usual for many standard systems of conservation laws % also MV ?
\cite{godlewski-raviart-1996,leveque-2002}. % Gavrilyuk20082941,MILLER2004198,LEE201313 for hyperelasticity

The standard FV strategy requires one to handle 1D Riemann problems (to define numerical fluxes), but this is not precise enough.
Indeed, the discrete FV fields $H,\bF_h,\bU,\bA_h,A_{cc}$ % of the dependent variables
(piecewise-constants on the polygonal cells $V_i$, $i\in \NN$ of 2D meshes)
% (constrained if we want to respect the requirements imposed by the scalar case where convergence is proved \cite{eymard-gallouet-herbin-1997})
should be ``stable enough'' and satisfy % approximately 
not only the conservation laws \eqref{eq:SVUCM0}, % consistent  
but also the preservation of the % physically natural
domain $H,\bA_h=\bA_h^T,A_{cc}>0$ and the physically-meaningful % free-energy 
\emph{inequality} \eqref{eq:SVHE}. % for concistency
% hence some stability conditions in addition to (weak/FV) consistency, even if this is not enough to infer existence (compactness)
%
% SHOULD WE DISCUSS WHEN WE ARE ABLE TO ACTUALLY SHOW CONVERGENCE TO AN ADMISSIBLE CONTINUOUS LIMIT ?

In % the next 
section\ref{sec:general} we recall how to standardly build % numerical 
stable Riemann-based FV approximations
% i.e. to construct such \emph{fully admissible} % FV numerical 
% approximations
% % to the solutions of the conservation laws, which satisfy the free-energy \emph{inequality} \eqref{eq:SVHE} and preserve the domain of the unknown fields in particular) 
% -- consistent with \eqref{eq:SVUCM0}, \eqref{eq:SVHE} and with values in $H,\bA_h=\bA_h^T,A_{cc}>0$ --,
for a quasilinear model
% that reads
% and is rotation-invariant (as required by the physical principle of Galilean invariance)
\begin{equation}
\label{eq:quasilinear}
% \partial_t q + A_x(q)\partial_x q + A_y(q)\partial_y q = B(q)
\partial_t q + \grad_qF_i(q)\partial_i q = B(q)
\end{equation}
like \eqref{eq:SVUCM0} when a companion % conservation 
law % to \eqref{eq:quasilinear}
with a dissipation\footnote{
 Typically induced by a thermodynamically-compatible source term 
 % of the % relaxation 
 % form
 % \mycomment{\footnote{It could be interesting to study asymptotics of our model following!}}
 like $B_i(q)=(q_i^\infty(q)-q_i)/\lambda_i$ % like BGK
 where $q^\infty(q)$ lies in the convex domain for $q$, and $\lambda_i(q)>0$ \cite{chen-levermore-liu-1994}.
} $D(q)\equiv -\nabla_q S(q)\cdot B(q) \ge 0$
\begin{equation}
\label{eq:secondprinc}
\partial_t S(q) + \div % \partial_x 
\bG(q) = -D(q)
\end{equation}
holds for a true \emph{mathematical entropy} $S(q)$ % that is % a scalar function that depends only on the \emph{rotation-invariants} and is
convex in the Galilean-invariants of the state $q$ on the whole % convex 
domain $H,\bA_h=\bA_h^T,A_{cc}>0$ \cite{godlewski-raviart-1996,cances-mathis-seguin-2016}.
Typically:
\begin{itemize}
 \item the Cauchy problems are numerically solved by 
%  \emph{marching in time} % the line method ?
%  with a fractional step approach
 \emph{time-splitting}:
 at each time step, the nonlinear flux terms % in \eqref{eq:SVUCM0} 
 are computed first by a \emph{forward} method,
 while source terms are computed last \emph{backward} in time, % by a time-implicit cell-centered formula
 \item in the first (forward) fractional step % where nonlinear flux terms are computed 
 one uses \emph{1D Riemann solutions} % following Godunov seminal approach
 e.g. with % a (linearly-degenerate) 2D system of 
 \emph{relaxed} conservation laws % that preserves rotation invariance
 such that the inequality \eqref{eq:secondprinc} % {eq:SVHE} % (with zero RHS) 
 is approximated at the same time as \eqref{eq:quasilinear}. % {eq:SVUCM0} % (without source for the present \emph{homogeneous} case)
 % by the 1D Riemann solver
\end{itemize}
% Although 
% we do not claim that % are not able to show that 
% such FV approximations are stable enough to converge to any % global ?
% entropy weak solution given general data -- or entropy/dissipatoive admissible measure-avlued solution ?
Such ``standard'' % entropy-consistent 
FV approximations are fully computable,
% consistent
% (convex admissible domains like $H,\bA_h=\bA_h^T,A_{cc}>0$ can be preserved
% whatever the initial conditions under a CFL condition % for small times, see the details hereafter
% ; and \eqref{eq:SVUCM0},\eqref{eq:SVHE} are actually satisfied by % all admissible 
% limits % with values in $H,\bA_h=\bA_h^T,A_{cc}>0$ 
% of converging FV approximations % to the 2D Riemann problems for the relaxed system 
% as the mesh is refined) % \mycomment{whatever the initial data in some functional} : solution in the large
and they converge % with a rate
to the smooth % enough 
solutions on small times % for well-prepared data ?
at least % (when Piola's identities are preserved : see below)
\cite{cances-mathis-seguin-2016,jovanovic-rohde-2006}.
Despite the lack of well-defined global solutions, they usually allow one to numerically explore a hyperbolic system of conservation laws % as a model for time evolutions 
in some useful regimes % at least 
thanks to a % minimal 
physically-based guarantee of stability\footnote{
 Even though they do not converge in all cases, and some meaningul solutions are not captured.
 In particular, we are aware that % in view of our hypotheses
 such FV approximations are likely to remain consistent only for small times, 
 which is a problem to capture e.g. % viscous
 steady states when $\lambda\ll1$. 
 Asymptotics preserving schemes are however more involved and will be studied later in future works, in a second numerical exploration of our model.
}.

But in our case, \eqref{eq:SVHE} plays the role of \eqref{eq:secondprinc} with $S$ replaced by $HE$ which is not convex, % in all conservative variables.
so it seems we cannot use % straightforwardly 
the standard procedure.

Therefore, after recalling the standard case for the sake of clarity,
we present a modification of the standard FV strategy in Section~\ref{sec:oned},
which relies on a previous analysis of the Lagrangian reformulation as it is usual for Eulerian systems \eqref{eq:quasilinear} like SVM that possess contact-discontinuity waves
(see e.g. \cite{bouchut-2004}).

%\subsection{Finite-Volume discretization using time-splitting and 1D Riemann solver : consistency and stability}
\subsection{Finite-Volume % standard 
approach % method 
% discretization schemes 
% using time-splitting
% with % entropy-consistent % 1D 
% Riemann solvers
to standard % for
conservation laws} % with entropy
\label{sec:general}

Given a tesselation of $\R^2$ using polygonal cells $V_i$ ($i\in\NN$), 
% let us denote by
consider first $q_h(t)=\sum_i q_i(t) 1_{V_i}$ a semi-discrete FV % discretization
approximation of $q$ solution to a Cauchy problem for \eqref{eq:quasilinear} 
% (a $11$-dimensional state vector for SVTM and SVUCM models)
with $q_h(0)\equiv q_h^0\approx q(0)$ at $t=0\equiv t^0$.
To define $q_h(t)$ at $t>0$,
% with a semi-explicit scheme % (explicit here because linear source term) 
% consistently with a _mild_ solution $C^0(weak)$ to \eqref{eq:quasilinear}
integration from time $t^n$ to $t^{n+1}=\sum_{k=0}^{n-1} \tau^k$ % $n=0\ldots N-1$ 
($\tau^k>0$) 
for standard % symmetric-hyperbolic 
systems of conservation laws % with sources 
like \eqref{eq:quasilinear}
is usually splitted into two sub-steps as follows.

\smallskip

First, for each % time step
$n\in\NN$, one integrates the % nonlinear 
flux terms forward % as usual:
so % that 
$q_h^{n+1,-}$ % projected
approximates the % weak 
solution at $t^{n+1,-}$ to the Cauchy problem with % given 
$q_h(t^n)$ % \approx q(t^n) admissible
at $t^n$ for
\begin{equation}
\label{eq:quasilinearhomogeneous}
\partial_t q + \partial_i F_i(q) = 0
\end{equation}
on $[t^n,t^{n+1})$. 
% Typically, 
Denoting $\bn_{i\to j}$ the unit normal from % the control volume 
$V_i$ to $V_j$ at % oriented face 
$\Gamma_{ij}\equiv\overline{V_i}\cap\overline{V_j}$, % two-point 
\emph{domain-preserving} 1D Riemann solvers  % (typically approximate, entropy-consistent)
allow one to define \emph{admissible} % first-order 
FV approximations
% of the solution at $t^{n+1,-}$ to the Cauchy problem for the \emph{homogeneous} system \eqref{eq:quasilinear} (with $B=0$ i.e. without source term) 
$q^{n+1,-}_h=\sum_i q^{n+1,-}_i 1_{V_i}$, i.e. % that lie
in the domain of $q$, by % the following ``projection'' % WHICH CAN ONLY BE CONSISTENT WITH _SOME_ SOLUTIONS ? IT IS FIRST-ORDER !!
the % first-order 
formula 
\begin{equation}
\label{eq:conservation}
% |V_i^h| \frac{d}{dt} c_i^h % \int_{\RR^d} c_h 1_{V_i^h} 
% + \sum_{ \overline{ V_i^h }\cap \overline{ V_j^h } %\bar V_j^h\cap\bar V_i^h 
% \neq \emptyset} 
% |\overline{ V_i^h }\cap \overline{ V_j^h }| \tilde F_{i\to j}(c_h) \bn_{i\to j}^h = 0 \,, \ \forall V_i^h
q_i^{n+1,-} = q_i^n - \tau^n \sum_{ \Gamma_{ij}\equiv\overline{V_i}\cap\overline{V_j} \neq \emptyset} 
\frac{|\Gamma_{ij}|}{|V_i|} \bF_{i\to j}(q_i^n,q_j^n;\bn_{i\to j}) % this is a ``left'' flux !
\end{equation} 
through numerical fluxes $\bF_{i\to j}(q_i^n,q_j^n;\bn_{i\to j})$ precised below,
under a % so-called 
CFL condition on $\tau^n$ (see Prop.~\ref{prop:convex}). Moreover, one gets a \emph{fully admissible} $q_h(t^{n+1})=\sum_i q^{n+1}_i 1_{V_i}$, which is also 
consistent with the dissipation \emph{inequality} associated with % the companion conservation law of % the homogeneous system \eqref{eq:quasilinearhomogeneous}
\eqref{eq:secondprinc} % without source 
in addition to \eqref{eq:quasilinearhomogeneous},
when $\bF_{i\to j}$ use \emph{entropy-consistent} Riemann solvers % that are not only domain-preserving 
(see Prop.~\ref{prop:decay}).
Second, % the 
source terms are % finally 
integrated backward (see Prop.~\ref{prop:discretedissipation}): 
\beq
\label{eq:convexcombination}
q^{n+1}_i = \left( q^{n+1,-}_i  + \frac{\tau^n}{\lambda_i} q_i^\infty(q^{n+1}_i) \right)/\left(1+\frac{\tau^n}{\lambda_i}\right) \,.
% q^{n+1}_i = \left(1-\frac{\tau^n}{\lambda_i}\right) q^{n+1,-}_i  + \frac{\tau^n}{\lambda_i} q_i^\infty(q^{n+1,-}_h) % forward
\eeq

\smallskip

Recall that by Galilean invariance, % (indifference to rotations of the global frame)
% like in our section 4.1 see Sec.~\ref{sec:oned} for more details regarding the other components in particular ($\Oij$ denotes the inverse mapping)
in \eqref {eq:conservation}, % for practice,
standard numerical fluxes read % are of the form
\begin{equation}
\label{eq:rotationinvariantflux} 
\bF_{i\to j}(q_i,q_j;\bn_{i\to j})=\Oij\tilde\bF(\Oij^{-1}q_i,\Oij^{-1}q_j)
\end{equation}
% consistent for the 1D propagation along $\bn_{i\to j}$ % neglecting variations in tangential direction !
% on the canonical components $\Oij^{-1}q$ of the system in the rotated frame
% \begin{equation}
% \label{eq:1Dsystem}  
% \partial_t \bO^{-1}q + \partial_{\bx\cdot\bn_{i\to j}} \bO^{-1}\bF(q)\cdot\bn_{i\to j} = 0
% \partial_t \bO^{-1}q + \partial_{\bx\cdot\bn_{i\to j}} \bF(\bO^{-1} q)\cdot\bn_{i\to j} = 0
% \end{equation}
with % components 
$\Oij^{-1} q$ % attached to the current configuration, expressed 
in a local % Cartesian 
basis % local frame 
$(\bn_{i\to j},\bn_{i\to j}^\perp)$ % after rotation
rather than in % the Cartesian basis 
$(\be_x,\be_y)$,
% for the global % reference 
% frame of $\R^2$ used for $q$ 
and with
\begin{multline}
\label{eq:numflux}
% \bF_{i\to j}(q_i,q_j;\bn_{i\to j}) = \Oij\tilde\bF_{i\to j}(\Oij^{-1}q_i,\Oij^{-1}q_j)
% = \bF(q_i)\bn_{i\to j}-\int_{-\infty}^0 \left(\Oij R(\xi,\Oij^{-1}q_i,\Oij^{-1}q_j)-q_i\right)d\xi
% % = -\bn^{ij}\cdot\bF(q_j)+\int_0^{+\infty} \left(R_{\bn^{ij}}^n(\xi,q_i,q_j)-q_j\right)d\xi
\tilde\bF_{i\to j}(\Oij^{-1}q_i,\Oij^{-1}q_j)
\\
= \Oij^{-1}\bF(q_i)\bn_{i\to j}-\int_{-\infty}^0 \left( R(\xi,\Oij^{-1}q_i,\Oij^{-1}q_j)-\Oij^{-1}q_i\right)d\xi
\end{multline}
defined simply with a 1D % (approximate)
\emph{Riemann solver} $R(\xi,\tilde q_i,\tilde q_j)$, % \Oij^{-1}q_i,\Oij^{-1}q_j
i.e. a well-defined % entropy-weak 
solution to the 1D Riemann problem % along $\be_a$
for 
% the 
% % (rotation-invariant) 
% system 
\eqref{eq:quasilinearhomogeneous}
with initial condition $\tilde q_i 1_{a<0} + \tilde q_j 1_{a>0}$ on $\R\ni a$,
which is a function of $\xi=a/t$, 
or some conservative approximation % solution to an approximate 1D Riemann problem
% also function of the same self-similarity variable
(termed \emph{simple} in the latter case when $R(\cdot,\tilde q_i,\tilde q_j)$ is piecewise constant). %  with finitely-many values

Recall also that % in \eqref{eq:conservation}
information propagates at finite speed in explicit FV %numerical 
approximation $q_h^{n+1,-}$, like in % exact solutions to the Cauchy problem for 
% the hyperbolic systems of conservation laws 
\eqref{eq:quasilinearhomogeneous}. 
% $(q_j,q_i)\to \bF_{j\to i}(q_j,q_i;\bn_{j\to i})$ is a Lipschitz-continuous function such that
% $$
% \bF_{i\to j}(q_i,q_j;\bn_{i\to j})=\bF(q)\cdot\bn_{i\to j}
% $$ 
% in direction $\bn_{i\to j}$ 
Moreover, that speed is consistent % in a loose way 
when the maximal speed $s(q_l,q_r)>0$ of the waves in $R(\cdot,q_l,q_r)$ is bounded continuously as a function of $q_l,q_r$.
And $q_h^{n+1,-}$ is \emph{admissible}
% (i.e. in the % convex 
% domain for $q$),
if the % (approximate) 
Riemann solver $R(\cdot,q_l,q_r)$ in \eqref{eq:numflux} % moreover 
preserves the % (convex) admissibility 
domain of $q$
(i.e. the % finitely-many 
values assumed by % the piecewise-constant function 
$\xi\to R(\xi,q_l,q_r)$ % all 
belong to the admissibility 
domain for $q$)
under CFL condition \cite{bouchut-2004}:
\begin{proposition}
\label{prop:convex}
If a numerical flux is given by \eqref{eq:rotationinvariantflux}, \eqref{eq:numflux} with a 1D Riemann solver $R(\cdot,q_l,q_r)$
that preserves a convex % admissibility 
domain for $q$ and has bounded maximal % propagation 
wavespeed $s(q_l,q_r)>0$, % continuously as a function of $q_l$ and $q_r$,
then the FV approximate solution \eqref{eq:conservation} to \eqref{eq:quasilinearhomogeneous}
% defined by the flux \eqref{eq:rotationinvariantflux}\eqref{eq:numflux} 
also preserves the % same % admissibility 
domain for $q$ % \eqref{eq:cvxcombin1}
under the CFL condition \eqref{eq:CFLsoft}
\beq
\label{eq:CFLsoft}
\forall i \quad \tau^n \sum_{j} % \frac
{|\Gamma_{ij}|s(\Oij^{-1}q_i^n,\Oij^{-1}q_j^n) }/{|V_i|} \le 1 \,.
\eeq
\end{proposition}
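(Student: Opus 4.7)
The plan is to rewrite $q_i^{n+1,-}$ as a convex combination of values taken in the convex admissibility domain and then invoke convexity directly. This is the classical Harten--Lax--van Leer / Bouchut-type argument, adapted to an unstructured 2D polygonal mesh via the Galilean-invariance formula \eqref{eq:rotationinvariantflux}.

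First I would combine the geometric identity $\sum_{j} |\Gamma_{ij}|\bn_{i\to j}=\bzero$ (closure of the polygonal boundary of $V_i$) with the consistency property $\tilde\bF(\tilde q,\tilde q)=\bF(\tilde q)\bn$ to inject the ``trivial flux'' $\bF(q_i^n)\bn_{i\to j}$ into each term of \eqref{eq:conservation}, obtaining
\[
q_i^{n+1,-} = q_i^n - \sum_j \alpha_{ij}\,\Oij\bigl[\tilde\bF(\Oij^{-1}q_i^n,\Oij^{-1}q_j^n) - \tilde\bF(\Oij^{-1}q_i^n,\Oij^{-1}q_i^n)\bigr],
\]
with $\alpha_{ij}=\tau^n|\Gamma_{ij}|/|V_i|$. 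Substituting \eqref{eq:numflux} and truncating the improper integral at $-s_{ij}:=-s(\Oij^{-1}q_i^n,\Oij^{-1}q_j^n)$ (since $R(\xi,q_l,q_r)\equiv q_l$ for $\xi\le-s(q_l,q_r)$), the bracket reduces to $-\int_{-s_{ij}}^{0}\bigl[R(\xi,\Oij^{-1}q_i^n,\Oij^{-1}q_j^n)-\Oij^{-1}q_i^n\bigr]d\xi$.

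Rearranging leads to the key identity
\[
q_i^{n+1,-} = \Bigl(1-\sum_j \alpha_{ij}\,s_{ij}\Bigr)\,q_i^n \;+\; \sum_j \alpha_{ij}\int_{-s_{ij}}^{0} \Oij\,R(\xi,\Oij^{-1}q_i^n,\Oij^{-1}q_j^n)\,d\xi.
\]
Under CFL~\eqref{eq:CFLsoft} the coefficient of $q_i^n$ is non-negative, the remaining weights $\alpha_{ij}\ge 0$ by construction, and the total mass of the combination equals $1$. Thus $q_i^{n+1,-}$ is expressed as a convex combination of $q_i^n$ and of the Riemann-fan values $\Oij R(\xi,\Oij^{-1}q_i^n,\Oij^{-1}q_j^n)$ for $\xi\in[-s_{ij},0]$. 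Since $R$ preserves the convex domain by hypothesis, and that domain is rotation-invariant under $\Oij$ (the natural conditions like $H>0$, $\bA_h=\bA_h^T>0$, $A_{cc}>0$ are unchanged under a change of horizontal Cartesian basis), each constituent lies in the domain, and convexity yields $q_i^{n+1,-}$ in the domain.

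The main delicate point is the tacit rotation-invariance of the admissibility domain, which has to be checked once for the particular state vector under consideration; for SVM this is immediate from the definition of $\Acal$. A secondary point, needed to make \eqref{eq:CFLsoft} a concrete restriction on $\tau^n$ rather than a vacuous bound, is the boundedness of $s_{ij}$ as a continuous function of $(q_i^n,q_j^n)$, which is included in the hypotheses. Everything else is essentially bookkeeping around the conservation identity \eqref{eq:conservation}.
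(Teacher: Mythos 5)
Your proof is correct and follows essentially the same route as the paper's: both rewrite the update \eqref{eq:conservation} with \eqref{eq:numflux} as the convex combination \eqref{eq:cvxcombin1} of $q_i^n$ and the Riemann-fan values over $[-s_{ij},0]$, with nonnegative weights summing to one under \eqref{eq:CFLsoft}. Your explicit mention of the closure identity $\sum_j|\Gamma_{ij}|\bn_{i\to j}=\bzero$ and of the rotation-invariance of the admissibility domain merely spells out steps the paper leaves implicit.
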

\begin{proof}
It suffices to rewrite \eqref{eq:conservation} with \eqref{eq:numflux} as
\begin{multline}
\label{eq:cvxcombin1}
q_i^{n+1,-} = q_i^n \left( 1 - \tau^n \sum_j %\frac
{|\Gamma_{ij}|s(\Oij^{-1}q_i^n,\Oij^{-1}q_j^n) }/{|V_i|} \right) 
\\
+ \sum_j (\tau^n%\frac
{|\Gamma_{ij}|}/{|V_i|}) \int_{-s(\Oij^{-1}q_i^n,\Oij^{-1}q_j^n) }^0 \Oij R(\xi,\Oij^{-1}q_i^n,\Oij^{-1}q_j^n) d\xi
\end{multline}
i.e. as a convex combination under the CFL condition \eqref{eq:CFLsoft}.
\end{proof}

A direct consequence of the admissibility of $q_h^{n+1,-}$ is that $q_h(t^{n+1})$ computed by % the convex combination 
\eqref{eq:convexcombination}
is also admissible, in the % admissibility 
domain of $q$. % too 
% as soon as the CFL condition \eqref{eq:CFLsoft} is strengthened by 
% \beq
% \label{eq:CFLstrengthened}
% \tau^n\le\lambda
% \eeq
% in the ``forward'' convex combination case : not in the backward
Moreover it holds
\beq
\label{eq:entropydecrease}
S(q_h^{n+1})\le S(q_h^{n+1,-}) \le S(q_h^{n})
\eeq
i.e. the (convex) mathematical entropy necessarily decreases.
But this is % of course 
not enough yet for $q_h(t^{n+1})$ to be \emph{fully admissible}, 
i.e. to approximate % (and hopefully converge, as mesh is refined, to)
% satisfy a discrete version of the inequality of Clausius-Duhem type formulating the second thermodynamics principle
the \emph{inequality} % associated with \eqref{eq:secondprinc} \emph{when $D=0$}
\begin{equation}
\label{eq:secondprincineq}
\partial_t S(q) + \div % \partial_x 
\bG(q) \le -D(q)
\end{equation}
as an admissibility criterion % for the necessarily non-unique weak solutions
formulating the thermodynamics second principle. % for irreversible processes
%
% The consistency of numerical approximations with second principle inequalities is essential
% to possibly converge to physically admissible \emph{entropy solutions} % to the homogeneous hyperbolic system \eqref{eq:quasilinear0}
% \cite{dafermos-2000}. % (even if not enough for uniqueness).

\smallskip

Next, if % we now assume that % the Riemann solver 
$R(\cdot,q_l,q_r)$ in % the numerical flux 
\eqref{eq:rotationinvariantflux}, \eqref{eq:numflux} is also \emph{entropy-consistent} 
(with \eqref{eq:secondprincineq} in 1D, % and not only domain-preserving, 
see \eqref{eq:discreteentropy} below) 
% for admissible q_l,q_r
then, under a CFL condition \emph{more stringent} than \eqref{eq:CFLsoft},
a discrete % equivalent to
version of \eqref{eq:secondprincineq} holds %in the homogeneous case
when $k=0=D$ (Prop.~\ref{prop:decay}), and % in general, also
when $k\ge0,D\ge0$ after % with e.g. 
backward integration of the sources (Prop.~\ref{prop:discretedissipation}):
\begin{proposition}
\label{prop:decay}
If the flux \eqref{eq:rotationinvariantflux}, \eqref{eq:numflux} uses a 1D Riemann solver $R(\cdot,q_l,q_r)$
that preserves the % convex admissibility 
domain of $q$ and is entropy-consistent with \eqref{eq:secondprincineq} when $k=0=D$ in the sense that, 
given % any 
admissible 
state-vectors $q_l,q_r$ and a direction $\bn$, a % necessarily consistent Lipschitz-continuous 
discrete entropy-flux vector % exists that
$ \tilde G_{\bn}(q_l,q_r)=-\tilde G_{\bn}(q_r,q_l)$ % such that
satisfies 
\begin{multline}
\label{solversufficientcondition}
% \Oij\bG(\Oij^{-1}q_l,\Oij^{-1}q_r)\cdot\bn_{i\to j}
% \le \bG(\Oij^{-1}q_l)\cdot\bn_{i\to j} \\
% - \int\displaylimits_{-\infty}^0 \left( S\left(R(\xi,\Oij^{-1}q_l,\Oij^{-1}q_r)\right) -S(\Oij^{-1}q_l) \right)d\xi \,.
%
%\bG(q_r)\cdot\be_x - \int\displaylimits_{-\infty}^0 \left( S\left(R(\xi,q_r,q_l)\right) -S(q_r) \right)d\xi
\bG(q_r)\cdot\bn + \int\displaylimits_0^{+\infty} \left( S\left(R(\xi,q_l,q_r)\right) -S(q_r) \right)d\xi
\\
\le
\tilde G_{\bn}(q_l,q_r)
\le 
\bG(q_l)\cdot\bn - \int\displaylimits_{-\infty}^0 \left( S\left(R(\xi,q_l,q_r)\right) -S(q_l) \right)d\xi \,,
\end{multline}
then, under the CFL condition \eqref{eq:CFL}
\beq
\label{eq:CFL}
% \tau^n s_i^n \le \left( \sum_j \frac{|\Gamma_{ij}|}{|V_i|} \right)^{-1} \,.
% \tau^n s_i^n \left( \sum_j \frac{|\Gamma_{ij}|}{|V_i|} \right) \le 1
\tau^n s_i^n \sum_j % \frac
{|\Gamma_{ij}|}/{|V_i|} \le 1
\eeq
where $s_i^n := \max_j s(\Oij^{-1}q_i^n,\Oij^{-1}q_j^n)$, the FV approximation \eqref{eq:conservation} 
preserves the % convex admissibility 
domain of $q$ % in particular
and satisfies the following % consistent 
discrete version of \eqref{eq:secondprincineq} (with $D=0$): % as a second principle formulation \eqref{eq:SVTMHE}
\beq%gin{multline}
\label{eq:discreteentropy}
S(q_i^{n+1,-}) - S(q_i^n) + \tau^n \sum_j \frac{|\Gamma_{ij}|}{|V_i|} \tilde G_{\bn_{i\to j}}(\Oij^{-1}q_i^n,\Oij^{-1}q_j^n)
% \\ = S(q_i^{n+1}) - S(q_i^n) + \tau^n \sum_j \frac{|\Gamma_{ij}|}{|V_i|} \bG(q_i^n,q_j^n)\cdot\bn_{i\to j}  \\
\le % \tau^n \sum_j \frac{|\Gamma_{ij}|}{|V_i|} \bG(q_i^n)\cdot\bn_{i\to j} \equiv 
0 \,.
\eeq%nd{multline}
\end{proposition}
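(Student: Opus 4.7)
The first step is to get domain preservation for free from Proposition~\ref{prop:convex}: since $s_i^n \ge s(\Oij^{-1}q_i^n, \Oij^{-1}q_j^n)$ on every face $\Gamma_{ij}$, the CFL \eqref{eq:CFL} is stronger than \eqref{eq:CFLsoft}, so $q_h^{n+1,-}$ remains in the convex admissibility domain of $q$.

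The key idea for the discrete entropy inequality is to rewrite the update as a cell-wise convex combination indexed by the \emph{uniform} speed $s_i^n$ rather than the face speeds. I would use that $R(\xi,\tilde q_l,\tilde q_r)=\tilde q_l$ for $\xi \le -s(\tilde q_l,\tilde q_r)$ to pad the lower integration bound in \eqref{eq:numflux} from $-s(\tilde q_l,\tilde q_r)$ down to $-s_i^n$ at no cost, which yields the equivalent form
\[
\Oij\tilde\bF(\Oij^{-1}q_i^n,\Oij^{-1}q_j^n) = \bF(q_i^n)\cdot\bn_{i\to j} + s_i^n q_i^n - \int_{-s_i^n}^{0}\Oij R(\xi,\Oij^{-1}q_i^n,\Oij^{-1}q_j^n)\,d\xi.
\]
Substituting this into \eqref{eq:conservation} and invoking the polygonal closure identity $\sum_j |\Gamma_{ij}|\bn_{i\to j}=\bzero$ to cancel the $\bF(q_i^n)$ contribution, \eqref{eq:conservation} rewrites as
\[
q_i^{n+1,-} = \Bigl(1 - \tau^n s_i^n \textstyle\sum_j \tfrac{|\Gamma_{ij}|}{|V_i|}\Bigr)\, q_i^n + \sum_j \tfrac{\tau^n|\Gamma_{ij}|}{|V_i|}\int_{-s_i^n}^{0}\Oij R(\xi,\cdots)\,d\xi,
\]
which, under the CFL \eqref{eq:CFL}, is a genuine convex combination of admissible states.

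Then I would apply Jensen's inequality twice to the convex entropy $S$ (first on the inner face average, then on the outer convex combination), using the rotation invariance of $S$ on the Galilean invariants of $q$ (so $S(\Oij R)=S(R)$), to get
\[
S(q_i^{n+1,-}) \le \Bigl(1-\tau^n s_i^n \textstyle\sum_j \tfrac{|\Gamma_{ij}|}{|V_i|}\Bigr) S(q_i^n) + \tau^n\sum_j \tfrac{|\Gamma_{ij}|}{|V_i|}\int_{-s_i^n}^{0} S(R(\xi,\cdots))\,d\xi.
\]
The last step is to use the right-hand inequality in \eqref{solversufficientcondition}, padded from $-s(\tilde q_l,\tilde q_r)$ to $-s_i^n$ in the same way (adding matching constants $(s_i^n - s)S(q_l)$ on both sides), to bound each inner integral by $\bG(q_i^n)\cdot\bn_{i\to j} + s_i^n S(q_i^n) - \tilde G_{\bn_{i\to j}}(\Oij^{-1}q_i^n,\Oij^{-1}q_j^n)$. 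Plugging in and cancelling the $\bG(q_i^n)$ terms a second time via the closure identity $\sum_j|\Gamma_{ij}|\bn_{i\to j}=\bzero$ leaves precisely \eqref{eq:discreteentropy}.

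The main obstacle is the wavespeed padding: it is consistent only because both $R$ and $S(R)$ are constant on $(-s_i^n,-s(\tilde q_l,\tilde q_r))$, so the resulting extra contributions $(s_i^n-s)q_l$ on the flux side and $(s_i^n-s)S(q_l)$ on the entropy-flux side appear symmetrically and conspire to cancel. This is what allows the argument to be formulated with the cell-uniform bound $s_i^n$ rather than face-by-face, and drives the whole proof; the remaining algebra is bookkeeping built on the polygonal divergence identity $\sum_j |\Gamma_{ij}|\bn_{i\to j}=\bzero$.
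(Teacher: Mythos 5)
Your proof is correct and follows essentially the same route as the paper's: rewrite the update \eqref{eq:conservation} as a convex combination of Riemann-solver values under the CFL condition \eqref{eq:CFL}, apply Jensen's inequality to the rotation-invariant convex entropy $S$, invoke the right-hand inequality of \eqref{solversufficientcondition}, and close with $\sum_j |\Gamma_{ij}|\,\bn_{i\to j}=\bzero$. The only (immaterial) difference is the truncation point of the padded integrals: you stop at $-s_i^n$ and keep an explicit residual weight on $q_i^n$, whereas the paper pads to $-\brk{\tau^n\sum_j|\Gamma_{ij}|/|V_i|}^{-1}$ so that the convex combination involves the Riemann solver alone.
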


\begin{proof}
We follow the 1D proof in \cite{bouchut-2004} and first rewrite \eqref{eq:conservation} with \eqref{eq:numflux} as
\beq
\label{eq:cvxcombin2}
q_i^{n+1,-} 
= \sum_j \tau^n \frac{|\Gamma_{ij}|}{|V_i|} 
 \int_{-\left(\sum_j \tau^n \frac{|\Gamma_{ij}|}{|V_i|}\right)^{-1}}^0 \Oij R(\xi,\Oij^{-1}q_i^n,\Oij^{-1}q_j^n) d\xi
\eeq
i.e. as a convex combination % (recall R(x,q_i^n,q_j^n)=q_i^n for x<...)
that depends only on the Riemann solver under the stringent CFL condition \eqref{eq:CFL}.
We can now use Jensen inequality with \eqref{eq:cvxcombin2}, and next \eqref{solversufficientcondition} with
$ 
\int\displaylimits_0^{+\infty} S\left(R(\xi,q_l,q_r)\right) = - \int\displaylimits_{-\infty}^0  S\left(R(\xi,q_r,q_l)\right)
% \quad \int\displaylimits_0^{+\infty} S(q_r) = - \int\displaylimits_{-\infty}^0  S(q_r) \,. 
$, to get
\begin{multline}
\label{eq:numfluxHLLintegralentropy1}
\int\displaylimits_{-s_i^n }^0 S\left(\Oij R(\xi,\Oij^{-1}q_i^n,\Oij^{-1}q_j^n)\right)d\xi
=
\int\displaylimits_{-s_i^n}^0 S\left(R(\xi,\Oij^{-1}q_i^n,\Oij^{-1}q_j^n)\right)d\xi
\\
\le \left(\tau^n s_i^n\right) \: S(q_i^n)
- \tau^n \left( \tilde G_{\bn_{i\to j}}(\Oij^{-1}q_i^n,\Oij^{-1}q_j^n) - \bG(q_i^n)\cdot\bn_{i\to j} \right)
\end{multline}
(recall % the mathematical entropy 
$S$ % depends only on
is a \emph{convex} function of the \emph{rotation-invariants} of the state-vector $q$),
and we finally obtain \eqref{eq:discreteentropy} with $\sum_j |\Gamma_{ij}| % \frac{|\Gamma_{ij}|}{|V_i|} \bG(q_i^n)\cdot
\bn_{i\to j} = \bzero$.
\end{proof}

\mycomment{ 
In the next Section~\ref{sec:oned}, we build an entropy-consistent % simple 
Riemann solver $R$ for our SVUCM model 
% that satisfies \eqref{solversufficientcondition} for some $\tilde G$ associated with $S\equiv HE$,
following \cite{bouchut-2003,bouchut-2004}.
}

The bound % a so-called weak BV bound !
on $q_h(t^{n+1})$ provides one with more 
stability\footnote{ % though not enough for convergence
  The convex % admissible 
  domain for $q$ is indeed % well 
  preserved as a consequence of Prop.~\ref{prop:convex},
  insofar as \eqref{eq:CFL} is more stringent than \eqref{eq:CFLsoft}.
} than Prop.~\ref{prop:convex}.
In particular, using \eqref{eq:discreteentropy}, Prop.~\ref{prop:decay} provides one with an a priori error estimate for FV approximations of % classical
smooth solutions % which actually exist on small times
to the % (local) rotation-invariant
conservation laws \eqref{eq:quasilinearhomogeneous}, see e.g. \cite{cances-mathis-seguin-2016}.
% although it is not enough to construct a converging (sub)sequence to (weak) solutions beyond 1D, see e.g.\cite{aregbadriollet-natalini-2000}.
% although this is often not enough yet to deduce convergence towrd weak solutions
%
% The consistency requirement \eqref{eq:discreteentropy} in Prop.~\ref{prop:decay} is stronger than the \emph{decay} of $S$, % which is 
% already ensured under \eqref{eq:CFLsoft} by Prop.~\ref{prop:convex}
% (one can already apply Jensen inequality to the convex combination \eqref{eq:cvxcombin1}).
% % Additionally
% It moreover ensures consistency of the decay of $S$
% at a rate not slower than the entropy-flux, since \eqref{solversufficientcondition} implies % \emph{consistent} in FV % integral sense with \eqref{eq:secondprinc} % thus satisfying
% $$ q_l=q_r=q \Rightarrow \tilde G(\Oij^{-1}q,\Oij^{-1}q)=\bG(\Oij^{-1}q)\cdot\be_x=\bG(q)\cdot\bn_{i\to j} \,. $$
% %
% It yields more precise FV approximations of physically-admissible solutions to quasilinear systems like SVUCM and SVTM
% % which is a priori better to construct numerical solutions that are consistent with the physical conservation laws and that are
% (possibly less mesh-dependent, recall Rem.~\ref{rem:limit}).
%
% $$ \bF_{i\to j}(q_i,q_j;\bn_{i\to j}) + \bF_{j\to i}(q_j,q_i;\bn_{j\to i}) = 0 $$ % \cite{godlewski-raviart-1996} % section 4.1 not enough for -true- consistency
% Moreover
And backward integration \eqref{eq:convexcombination} % with Euler scheme
of the source term next provides one with a \emph{fully admissible} approximation $q_h(t^{n+1})$: % (and a bound) !?
\mycomment{When is the entropy bound sufficient for convergence ? scalar and 1D case}
\begin{proposition}
\label{prop:discretedissipation}
For any $\tau^n>0$, using \eqref{eq:convexcombination} with $q_h^{n+1,-}$ from Prop.~\ref{prop:decay} satisfying \eqref{eq:discreteentropy} 
yields $q_h^{n+1}$ satisfying the following discrete version of \eqref{eq:secondprincineq}:
\begin{multline}
\label{eq:discreteentropy2}
S(q_i^{n+1}) - S(q_i^n) + \tau^n \sum_j \frac{|\Gamma_{ij}|}{|V_i|} \tilde G_{\bn_{i\to j}}(\Oij^{-1}q_i^n,\Oij^{-1}q_j^n)
%\\ = S(q_i^{n+1}) - S(q_i^n) + \tau^n \sum_j \frac{|\Gamma_{ij}|}{|V_i|} \bG(q_i^n,q_j^n)\cdot\bn_{i\to j} 
\\
\le % 0 = \tau^n \sum_j \frac{|\Gamma_{ij}|}{|V_i|} \bG(q_i^n)\cdot\bn_{i\to j} 
-\tau^n k |\bU_i^{n+1}|^2 -\tau^n D(q_i^{n+1})
\end{multline}
\end{proposition}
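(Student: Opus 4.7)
My plan is to read \eqref{eq:convexcombination} as a backward Euler step on the full source, then combine a tangent inequality for the convex entropy $S$ with the flux-level bound \eqref{eq:discreteentropy} already supplied by Prop.~\ref{prop:decay}. Writing the combined relaxation and friction source as $B(q)$ (with $B_{\text{relax}}(q) = (q^\infty(q)-q)/\lambda$ on the $(\bA_h,A_{cc})$ block and $B_{\text{fric}}(q) = -kH\bU$ on the momentum block, both extended by zero on the other components), \eqref{eq:convexcombination} rearranges to $q_i^{n+1} - q_i^{n+1,-} = \tau^n B(q_i^{n+1})$. Solvability of this nonlinear fixed-point equation for $q_i^{n+1}$ follows by a standard Banach argument using the Lipschitz character of $q^\infty$ on the admissibility domain; and since $q_i^{n+1}$ is a convex combination of $q_i^{n+1,-}$ (admissible by Prop.~\ref{prop:decay}) and of $q_i^\infty(q_i^{n+1})$ (admissible by the standing assumption recalled in the footnote preceding \eqref{eq:secondprinc}), it remains in the convex admissibility domain for any $\tau^n>0$.

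Next, by convexity of $S$ at the point $q_i^{n+1}$, the tangent inequality yields
\[
S(q_i^{n+1,-}) \ge S(q_i^{n+1}) + \nabla_q S(q_i^{n+1}) \cdot (q_i^{n+1,-} - q_i^{n+1}) = S(q_i^{n+1}) - \tau^n\, \nabla_q S(q_i^{n+1}) \cdot B(q_i^{n+1}).
\]
Thermodynamic compatibility of the relaxation source, already encoded in the definition $D(q)=-\nabla_q S(q)\cdot B_{\text{relax}}(q)\ge 0$, handles the conformation block. For the friction block it suffices to observe that $\nabla_{H\bU}S = \bU$ (because $S=HE$ with $E$ quadratic in $\bU$ as in \eqref{eq:energy}), so $-\nabla_q S(q_i^{n+1})\cdot B_{\text{fric}}(q_i^{n+1}) = k|\bU_i^{n+1}|^2$. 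Combining these two contributions gives
\[
S(q_i^{n+1}) \le S(q_i^{n+1,-}) - \tau^n\bigl(k|\bU_i^{n+1}|^2 + D(q_i^{n+1})\bigr).
\]

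Adding this bound to \eqref{eq:discreteentropy} from Prop.~\ref{prop:decay} immediately produces \eqref{eq:discreteentropy2}, with no additional CFL restriction beyond \eqref{eq:CFL}. The main technical obstacle I anticipate is verifying the identity $-\nabla_q S\cdot B = D + k|\bU|^2$ cleanly on the full admissibility domain, because $S=HE$ is written in the conservative variables $(H,H\bU,H\bF_h,HA_{cc}^{1/4},H\bA_h^{-2})$ of Prop.~\ref{prop:symhyp} while the relaxation laws \eqref{eq:Ah}--\eqref{eq:Acc} are naturally posed in the primitive variables $(\bA_h,A_{cc})$; this is handled by the change-of-variables computation that turns the per-component targets $\bF_h^{-1}\bF_h^{-T}$ and $H^{-2}$ into the Clausius--Duhem dissipation \eqref{eq:dissipation}, and the resulting identity is precisely what is needed to close the estimate.
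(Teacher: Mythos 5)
Your proof is correct and takes essentially the same route as the paper's: the paper likewise tests the backward update \eqref{eq:convexcombination} against $\grad_q S(q_i^{n+1})$ (legitimate because $q_i^{n+1}$ is an admissible convex combination in the convex domain), invokes the convexity of $S$ and the definition of $D$, and adds the result to \eqref{eq:discreteentropy}. Your additional remarks on solvability of the implicit step and on the friction term are consistent elaborations of the same argument.
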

\begin{proof} To \eqref{eq:discreteentropy}, add \eqref{eq:convexcombination} tested against $\grad_q S(q_i^{n+1})$ for all cells $V_i$ 
(which is possible since $q_i^{n+1}$ is admissible as a convex combination in a convex admissibility domain):
\eqref{eq:discreteentropy2} results from the convexity of $S$ and the definition of $D$. 
\end{proof}
% In the proof of Prop.~\ref{prop:discretedissipation}, it is enough that $S$ is convex with respect to those % conservative 
% variables $q$ whose time-evolutions actually involve source terms.

\smallskip

For SVM % complemented by with \eqref{eq:SVHE}, % unfortunately, 
neither Prop.~\ref{prop:decay} nor the simpler consequence \eqref{eq:entropydecrease} of Prop.~\ref{prop:convex} can be straightforwardly used 
because we are not aware of % a full set of 
conservative variables $q$ such that $HE$ is convex % functional 
on the whole admissible domain, i.e. is a % true
mathematical entropy.
% We would % nevertheless 
% like to apply the % standard 
% FV approach above % in practice
% to SVM complemented by the inequality \eqref{eq:SVHE}
% % possibly after slight modifications
% although $HE$ is not obviously convex with a full set of conservative variables.
%
But note that $HE$ equals $\tilde S = H\tilde E-H\log(|\bA_h|A_{cc})$ % where {eq:tildeE}
when $H|\bF|=1$, and it is convex with respect to % each partial set of conservative variables
$\tilde q = (H,H\bF,H\bU,H A_{cc}^{1/4})$, and $\bA_h$ which is simply transported. % (see Lem.~\ref{lem:convexity}), 
So the standard FV approximation procedure above can be used on slightly modifying the time-splitting: 
segregating the time-evolution of $\tilde q$ and of $\bA_h$ in the first split step.
% And this is most advantageously done using a reformulation in Lagrangian coordinates. % since the transport becomes steady
%
We propose such a discretization of SVM in the sequel
that also takes advantage of the possibility to rewrite the SVM system % (with a conservative formulation) 
in Lagrangian coordinates to strike a balance between entropy stability,
and numerical accuracy (especially for the contact discontinuities satisfied by $\bA_h,A_{cc}$).
% as often for systems like SVM\footnote{ 
%  Many discretization methods, like the so-called Lagrange-projection or Lagrange-remap methods \cite{despres-2010-book},
%  try to use the Lagrangian formulation for a more accurate treatment of the contact-discontinuity waves. % also like in chalons-girardin-kokh-2016,despres-2017
% } % though in a Eulerian setting
% without actually moving any mesh, i.e. on a fixed tesselation of the current configuration of the fluid within $\RR^2$, see e.g. \cite{,chalons-girardin-kokh-2016}.
Following \cite{bouchut-2003,bouchut-2004}, we construct entropy-consistent Riemann solvers for 
(the $\tilde q$ sub-system of) a relaxed SVM system in Eulerian coordinates % which is entropy-consistent for $\tilde S$
obtained with the help of a BGK approximation % of SVM, which is the kinetic interpretation of an approximate Riemann solver coinciding with FVS
in Lagrangian coordinates.
% It is quite fortunate that $\tilde S$ is convex with respect to each of the variables $\tilde q = (H,H\bU,H\bF,H A_{cc}^{1/4})$ and $(H,H\bA_h)$ to that aim (it is obvious on recalling Prop.~\ref{prop:symhyp}).
% % but note that $\tilde S$ is not a true mathematical entropy for SVM for it is not convex with respect to all variables
% % not even for the subsystem since it does not satisfy an additional conservation law !!
The SVM system in Lagrangian coordinates is studied in Appendix~\ref{app:riemann}.
% Prop.~\ref{prop:decay} can be used to get a discrete version of \eqref{eq:SVHE} with $\tilde S$, for FV approximations of (the $\tilde q$ sub-system of) SVM in Lagrangian coordinates.
% Of course, to use Prop.~\ref{prop:decay}, % we recall that 
% one needs % has to build % 1D 
% Riemann solvers that satisfy \eqref{solversufficientcondition} for some % entropy 
% $S$, and the latter are specific to each system of the form \eqref{eq:quasilinear}. % \eqref{eq:secondprinc}
% So, in absence of a general % theorem 
% result to construct such a tool,
% this remains the major difficulty in applying the (otherwise generic) procedure above to any case, incl. to SVM (after some modification of the time-splitting).
In the next Sec.~\ref{sec:oned}, we use the results of Appendix~\ref{app:riemann} to discretize SVM \emph{in Eulerian coordinates}.

% \begin{lemma}[Convexity of $\tilde S$]
% \label{lem:convexity} 
% % using {eq:tildeE} \tilde E = (|\bU|^2+gH)/2 + {G} \left( H^2 A_{cc} + \tr(\bF_h\bA_h\bF_h^T) \right)/2 \,.
% A mathematical entropy for the subsystem 
% % using {eq:tildeE} \tilde E = (|\bU|^2+gH)/2 + {G} \left( H^2 A_{cc} + \tr(\bF_h\bA_h\bF_h^T) \right)/2 \,.
% % & \partial_t (HA_{cc}^{1/4}) + \div( H \bU A_{cc}^{1/4} ) =  H(H^2A_{cc}^{-3/4}-A_{cc}^{1/4})/\lambda \,,
% \end{lemma}

\mycomment{ 
In particular, we have not yet precised how the components of $\bO^{-1}_{i,j}q$ that are attached to the reference configuration are treated
at each interface: the % right 
choice of an adequate Lagrangian basis at each interface remains a degree of freedom % for our method
at this stage.
To map the Lagrangian description to a Eulerian one, we use Piola's identities.
As a consequence, the FV approximations constructed with our 1D Riemann solvers are a priori consistent only with those solutions to SVM that preserve Piola's identities.
% and which can also be captured / approximated by
% solutions to a 2D system of relaxed conservation laws (whose translation-invariant solutions yield our 1D Riemann solver) that also preserve Piola's identities
%
So, although it is not a restriction of consistency for smooth solutions,
it is a (maybe too) strong restriction to effectively capture % all entropy weak 
solutions with the FV approach above % using entropy-consistent 1D Riemann solvers
insofar as the FV approximations, which are already only minimally-stable see rem.~\ref{rem:convergence},
do % a priori 
not preserve the Piola's identities (while their % refined-mesh 
limit % possibly also in zero-relaxation limit
should preserve it).
We will look at 2D FV approximations which actually aim at preserving Piola's identities
(and which are thereby expected more ``stable'', i.e. more likely to converge to any entropy weak solution) % on a given range of times
like e.g. \cite{BETANCOURT2016420} in future works.

\begin{remark}[About the entropy-consistency of 2D systems and convergence]
\label{rem:convergence}
In the standard 1D approach above, FV approximations can % often 
be interpreted as approximate solutions to 
genuinely 2D Riemann problems for the initial system
(as well as for an approximated 2D relaxation system too as intermediate in our case)
which admit the 1D Riemann solver as % \emph{exact} 
translation-invariant solution.
% But
Note however that % in the 1D approach above 
one does \emph{not} require that 2D % relaxation 
system be entropy consistent !

If the 2D FV approximations admit a limit (that satisfies Piola's identities, here), 
then the limit should still be consistent with the full system of conservation laws incl. entropy dissipation by % some kind of
Lax-Wendroff theorem \cite{CPA3160130205}.
Of course, the usual difficulty with multidimensional hyperbolic systems remains % unsolved 
with the 1D approach above,
namely how to effectively % show that solutions exist as a limit on large times
construct 2D solutions \cite{MR2856990}.
% We do not know whether the directional stability requirement might ever be sufficient : for measure-valued solutions only ?
But % in any case
the directional entropy-consistency required by 1D Riemann solvers % (translation-invariant) solutions
is weaker than requiring stability for the full 2D % relaxation 
system (i.e. for all possible solutions) \cite{bouchut-2004a}.
% \mycomment{
% In particular, additional ``stability'' difficulties arise in our case % because we do not consider the 2D structure !!
% if we want to preserve Piola's identities,
% which seem necessary to ensure that the limits of $H$ and $|\bF|^{-1}$ coincide.
% % see for instance the error $\Hcal H^{-1}\neq 1$ in our scheme below
% }
\end{remark}
}

\subsection{Finite-Volume approach 
% with entropy-consistent Riemann solver of relaxation type 
% based on a relaxation of Suliciu type 
% for the system in Lagrange coordinates
to Saint-Venant-Maxwell} % system
\label{sec:oned}

We adapt the framework of Section~\ref{sec:general} % with a view to approximating % at first-order !!
% solutions to Cauchy problems for the SVM system in \emph{smooth} cases at least
to compute % numerically approximate % \emph{some} -- smooth ??
discrete FV fields
% \footnote{
%  It is yet difficult to think of a scheme able to capture any solution
% }
% for FV approximations % (i.e. cellwise constants)
% with (cellwise) values 
% in the convex $\Acal$ of % fields defining dependent variables that are 
$$
q = (H ,H F^x_a,H F^y_a,H F^x_b,H F^y_b,H U^x,H U^y,H A_{cc},H A_{aa},A_{ab}/\sqrt{A_{aa}A_{bb}},H A_{bb})
%q = (\Hcal , \Hcal H^{-1}, \Hcal F^x_a,\Hcal F^y_a,\Hcal F^x_b,\Hcal F^y_b,\Hcal U^x,\Hcal U^y,\Hcal A_{cc}^{1/4},\Hcal A_{aa},\Hcal A_{ab},\Hcal A_{bb})
$$
that % are % (smooth) solutions to
solve SVM % system of conservation laws, % for solutions such that
% after modifying slightly the operator-splitting % at each time step.
on a % Vorono\"i 
space-tesselation (i.e. in Eulerian description, % a Eulerian method 
with cells paving the same % current configuration
space at all time steps).
% as opposed to 
% material % volume
% elements attached to a reference configuration like in so-called 
% ``Lagrangian methods''
% which allows a simpler treatment of bounded domain -- with boundaries given where to impose conditions -- and avoid projection/remapping for the sake of accuracy
It splits % each time step
time-integration into 2 steps.

\smallskip

\underline{First step}: Given a FV approximation % ie cell values 
$q_i^n$ % of the components 
of $q$ at % some  discrete 
time $t^n$, $n\in\N$, 
we consider % the time integration on $[t^n,t^{n+1}]$ of
the homogeneous SVM system without source term.
We use a transport-projection method \cite{bouchut-2004a} % applied to the FV discretization on a fixed mesh 
based on an approximation of (Eulerian) SVM:
\beq
\label{eq:SVUCM0detailsub1}
\begin{aligned}
& \partial_t \tilde H + \partial_j (\tilde H \tilde U^j ) = 0
\\
& \partial_t (\tilde H F^i_\alpha) + \partial_j (\tilde H \tilde U^j F^i_\alpha - \tilde H \tilde F^j_\alpha U^i ) = 0
\\
& \partial_t (\tilde H U^i) + \partial_j (\tilde H \tilde U^j U^i + \tilde H \tilde F^j_\alpha \Pcal^i_\alpha % (gH/2 + GH^2 A_{cc}) \sigma_{ij}\sigma_{\alpha\beta}F^i_\beta - G A_{\alpha\beta} F^i_\beta
) = 0
\\ 
& \partial_t (\tilde H A_{cc}) + \partial_{j} (\tilde H \tilde U^j A_{cc} ) = 0 % H(H^2-A_{cc})/\lambda
\\ 
& \partial_t (\tilde H A_{\alpha\beta}) + \partial_j ( \tilde H \tilde U^j A_{\alpha\beta} ) = 0
\end{aligned}
\eeq
where $\tilde H\approx |\bF_h|^{-1}$, $\tilde\bU\approx\bU$, $\tilde\bF_h\approx\bF_h$ still have to be defined such that 
not only the involution $\tilde H|\bF_h|=1$ of SVM is (approximately) preserved, but also % Piola's identities i.e.
\begin{equation}
\label{eq:piolatilde} 
\partial_j( \tilde H \tilde F^j_\alpha ) \approx 0 \quad \forall \alpha \,.
\end{equation}
Indeed, using $d x_i = \tilde U^i dt + \tilde F^i_\alpha d a_\alpha$ and equality \eqref{eq:piolatilde} in the smooth case, one can retrieve from \eqref{eq:SVUCM0detailsub1}
the SVM equations in Lagrangian coordinates $(a_\alpha)$. % at least
Then, the entropy-stability of a flux-splitting FV scheme for SVM in Lagrangian coordinates 
(see Lemma~\ref{lem:entropylag3} in Appendix~\ref{app:lagrange})
can be transferred to a simple % approximate
Riemann solver in Eulerian coordinates, which also captures well contact discontinuities \cite{bouchut-2003}.
% (recall not only $A_{cc}$ is transported, but also $A_{\alpha\beta}$)
% cite bouchut-2004 ?
Precisely, for the Lagrangian-to-Eulerian mapping, we complement \eqref{eq:SVUCM0detailsub1} by % an equation consistent with $H=|\bF_h|^{-1}$
\begin{equation}
\label{stronglyconsistent} 
\partial_t (\tilde H H^{-1} ) + \partial_j (\tilde H \tilde U^j H^{-1} ) - \partial_{j}( \tilde H \tilde F^j_\alpha U^j \sigma_{ij}\sigma_{\alpha\beta}F^i_\beta ) = 0
\end{equation}
and we now propose to use % require consistently with $H=|\bF_h|^{-1}$,
$$
\tilde H = H 
\quad 
\tilde U^j = U^j 
\quad
\tilde H \tilde F^j_\alpha = E^j_\alpha % \tilde H \tilde F^j_\alpha
$$
where $E^j_\alpha \approx H F^j_\alpha$ is the $(\alpha,j)$ entry of the cofactor matrix % associated with
of $\bF_h^{-1}$.
Then, \eqref{eq:SVUCM0detailsub1}, \eqref{stronglyconsistent} can be closed using % an evolution equation 
for $E^j_\alpha$ (see e.g. \cite{wagner-2009})
\begin{equation}
\label{eq:cofGevolution}
\partial_t E^j_\alpha % = 
+ U^k \partial_k E^j_\alpha + E^k_\alpha ( \sigma_{ij}\partial_i ) U^k = 0
\end{equation}
which preserves % Piola's identity
\eqref{eq:piolatilde}. %  it reads -- with the help of the involution ? ... see also gavrilyuk ? ...
It is noteworthy that preserving % the involution
\eqref{eq:piolatilde} "discretely" with  % FV schemes and 
1D Riemann solvers using \eqref{eq:cofGevolution} % 
reduces to capturing a contact discontinuity. % wave, solution to \eqref{eq:cofGevolution}
%(which is the reason why we do not % actually need to 
%write \eqref{eq:cofGevolution} in a conservation form). % see gavrilyuk

Now, in this first time-integration step of SVM by a splitting approach,
we can % first 
update the FV approximation of $\tilde q$ % per cell 
by \eqref{eq:conservation} on the one hand,
using % a numerical flux
$F(q_i^n,q_j^n;\bn_{i\to j})$ computed from % the approximate solution of 1D Riemann problems 
a \emph{fully-admissible} 1D Riemann solver for the system \eqref{eq:SVUCM0detailsub1}, \eqref{stronglyconsistent}, 
\eqref{eq:cofGevolution} % a closure of the "relaxed" SVM system above
with a flux that is \emph{discontinuous} through contact waves for $\bA_h$.
% Then,
On the other hand, $\bA_h$ can be transported % by a standard upwind scheme 
and projected in a segregated sub-step preserving full-admissibility of the solution
($\bA_h$ remains SDP with the % standard 
upwind scheme, and % the discrete ``entropy'' decays insofar as $E$ is convex in $\bA_h$ when all other variables $\tilde q$ in $q$ are freezed
$E$ decays as a single convex function in $\bA_h$). 
% in each cell !!
% the situation is similar to james and mathies
%\paragraph{3-waves} 

Precisely, % for the first sub-step, 
we compute $F(q_i^n,q_j^n;\bn_{i\to j})$ % as the flux of
% from the 1D solution to 
% the 1D Riemann problem for
with the following simple 1D Riemann solver 
in direction $\be_m = \bn_{i\to j}$ \footnote{In \eqref{eq:SVUCM0detaileul1Drelaxedter}, $m$ is fixed: Einstein convention is not used.}
for % the system 
\eqref{eq:SVUCM0detailsub1}, \eqref{stronglyconsistent}, \eqref{eq:cofGevolution}
(motivated by a flux-splitting for SVM in Lagrangian coordinates, see App.~\ref{app:lagrange}): % given an interface of the Eulerian grid with normal 
\beq
\label{eq:SVUCM0detaileul1Drelaxedter}
\begin{aligned}
& \partial_t H  + \partial_m ( H U^m ) = 0
\\
& \partial_t ( H F^\delta_e ) + \partial_m ( H U^m F^\delta_e - E^m_e U^\delta ) = 0 
\\
& \partial_t ( H F^\delta_f ) + \partial_m ( H U^m F^\delta_f ) = 0 
\\
& \partial_t ( H U^\delta ) + \partial_m ( H U^m U^\delta + E^m_e \Pi^\delta_e ) = 0 
\\ 
& \partial_t ( H \Pi^\delta_e / c^2 ) + \partial_m ( H U^m \Pi^\delta_e / c^2 + E^m_e U^\delta ) = 0 
\\
& \partial_t ( H \Vcal_e ) + \partial_m ( H U^m \Vcal_e + E^m_e \Zcal_{ee} ) = 0 
\\ % {U^\delta (n^i_\delta \sigma_{ij} F^j_f = \Vcal_e^\delta
& \partial_t ( H \Zcal_{ee}/ c^2  ) + \partial_m ( H U^m \Zcal_{ee} / c^2  + E^m_e \Vcal_{e}^\delta ) = 0 
\\ % \Pcal^\delta_e (n^\delta_i\sigma_{ij} F^j_f) = \Zcal_{ee}^\delta %% <= no einstein summation on delta !!
& \partial_t ( H c^2 )  + \partial_m ( H U^m c^2 ) = 0
\\ 
& \partial_t ( H E^m_e )  + \partial_m ( H U^m E^m_e ) = 0
\end{aligned}
\eeq % \cite[formula (4.3)]{bouchut-2003} % projection method based on the BGK model
where $\delta\in\{\parallel,\perp\}$ denotes the two components in a Cartesian basis $(\bn^\parallel,\bn^\perp)$ for the geometric coordinates (Eulerian description),
and $(\be_e,\be_f)$ is a basis for the material coordinates (Lagrangian description) % a material direction $\be_e$ 
yet to be precised.
% at each interface such that the left/right values $(E^m_e)_{l/r}$ across the interface are positive !? % have same sign but one can always choose it positively

Our motivation for % the \emph{conservative} system % of relaxed conservation laws 
\eqref{eq:SVUCM0detaileul1Drelaxedter} is the possibility to reformulate it as a mapping to Eulerian coordinates 
of the \emph{fully-admissible} % 3-wave relaxation system % \eqref{eq:SVMrelaxed} 
1D Riemann solver constructed in App.~\ref{app:lagrange} % as a fully-admissible flux-vector splitting 
for SVM in Lagrangian coordinates:
\begin{proposition} %{}
\label{prop:reformulation}
If $\be_m=\bn^\parallel$, $F^\parallel_f\equiv 0$, $E^m_eF^\perp_f\equiv 1$,
then % the system 
\eqref{eq:SVUCM0detaileul1Drelaxedter} % to be understtod in the weak sense
also writes
\beq
\label{eq:SVUCM0detaileul1Drelaxedterdiagonal}
\begin{aligned}
& 0 = \partial_t F^\delta_f 
+ U_m \partial_m F^\delta_f
\\
& 0 = \partial_t \left( F^\delta_e + {\Pi^\delta_e}/{c^2} \right) 
+ U^m \partial_m\left( F^\delta_e + {\Pi^\delta_e}/{c^2} \right)
\\
& 0 = \partial_t \left( U^\delta \pm {\Pi^\delta_e}/c \right) 
+ \left( U^m \pm c H^{-1} E^m_e\right) % == \left( U^m \mp \Pi^m_e/c  \pm c ( H^{-1} + \Zcal_{ee} / c^2 ) E^m_e\right) ??
\partial_m \left( U^\delta \pm {\Pi^\delta_e}/c \right) % \left( (gH^2/2 + GH^3 A_{cc}) \sigma_{ij} F^j_b - G F^i_\alpha A_{\alpha a} \right)n_\delta^i
\\
& \quad + c H^{-1} \left( U^\delta \pm {\Pi^\delta_e}/c \right) \partial_m ( c E^m_e )
\\
& 0 = \partial_t \left( \Vcal_e \pm {\Zcal_{ee}}/{c} \right)
+ \left( U^m \pm c H^{-1} E^m_e\right) % == \left( U^m \mp \Pi^m_e/c  \pm c ( H^{-1} + \Zcal_{ee} / c^2 ) E^m_e\right) ??
\partial_m \left( \Vcal_e \pm {\Zcal_{ee}}/{c} \right) 
\\
& \quad + c H^{-1} \left( \Vcal_{e} \pm {\Zcal_{ee}}/c \right) \partial_m ( c E^m_e )
\\
& \partial_t \left( H^{-1} + \Zcal_{ee} / c^2 \right)
+ U^m \partial_m \left( H^{-1} + \Zcal_{ee} / c^2 \right) = 0
\end{aligned}
\eeq % \cite[formula (4.3)]{bouchut-2003} % projection method based on the BGK model
for any initial condition such that
$
\Zcal_{ee} := \Pi^\parallel_e F^\perp_f - \Pi^\perp_e F^\parallel_f =  \Pi^\parallel_e (E^m_e)^{-1}
$.
\end{proposition}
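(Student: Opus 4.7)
My plan is to verify the equivalence by manipulating the conservative system into characteristic form, using mass conservation to convert each conservative equation into a material-derivative form, and then recognizing Riemann invariants through suitable linear combinations.

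First I would rewrite every equation in \eqref{eq:SVUCM0detaileul1Drelaxedter} using the mass-conservation identity. Writing $D_t = \partial_t + U^m\partial_m$, each equation of the form $\partial_t(H\phi) + \partial_m(HU^m\phi + R) = 0$ becomes $H\,D_t\phi + \partial_m R = 0$. Applied to the $c^2$ and $E^m_e$ equations (where $R = 0$) this gives $D_tc^2 = 0 = D_t E^m_e$, so $c$ (taken positive) and $E^m_e$ are both materially conserved, and likewise $D_tF^\delta_f = 0$, which is the first diagonal equation and also shows that the conditions $F^\parallel_f\equiv 0$ and $F^\perp_f = 1/E^m_e$ (equivalently $E^m_eF^\perp_f\equiv 1$) are preserved by the flow.

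Next, observe that the $F^\delta_e$ and $\Pi^\delta_e/c^2$ equations have opposite off-diagonal fluxes $\mp E^m_e U^\delta$, so summing them yields $H\,D_t(F^\delta_e + \Pi^\delta_e/c^2) = 0$; this produces the second line of \eqref{eq:SVUCM0detaileul1Drelaxedterdiagonal}. For the acoustic block I would exploit $D_tc = 0$ to obtain, from the momentum and $\Pi^\delta_e/c^2$ equations,
\begin{equation*}
D_tU^\delta = -H^{-1}\partial_m(E^m_e\Pi^\delta_e)\,,\qquad
D_t(\Pi^\delta_e/c) = -cH^{-1}\partial_m(E^m_eU^\delta)\,,
\end{equation*}
then take the $\pm$ combinations and regroup using $\Pi^\delta_e \pm cU^\delta = \pm c(U^\delta \pm \Pi^\delta_e/c)$ to factor the flux as $\partial_m\bigl(cE^m_e(U^\delta \pm \Pi^\delta_e/c)\bigr)$. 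Expanding the product then gives the advection operator $\partial_t + (U^m\pm cH^{-1}E^m_e)\partial_m$ acting on $U^\delta\pm\Pi^\delta_e/c$, plus a zero-order term in $\partial_m(cE^m_e)$, which is the target third line. The $\Vcal_e,\Zcal_{ee}$ pair is treated identically to give the fourth line.

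Finally, for the last equation I would use $D_tH^{-1} = H^{-1}\partial_mU^m$ (derived from mass conservation) and the $\Zcal_{ee}/c^2$ equation $D_t(\Zcal_{ee}/c^2) = -H^{-1}\partial_m(E^m_e\Vcal_e)$, obtaining $D_t(H^{-1} + \Zcal_{ee}/c^2) = H^{-1}\partial_m(U^m - E^m_e\Vcal_e)$; the compatibility constraint $\Zcal_{ee} = \Pi^\parallel_e(E^m_e)^{-1}$, read through the definitions $\Zcal_{ee} = \Pi^\parallel_eF^\perp_f - \Pi^\perp_eF^\parallel_f$ and $\Vcal_e$ together with $F^\parallel_f\equiv 0$, $E^m_eF^\perp_f\equiv 1$, forces $E^m_e\Vcal_e = U^m$, so the right-hand side vanishes. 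The main obstacle I anticipate is the bookkeeping in this last step: showing that the algebraic constraint stated in the proposition is indeed equivalent to $E^m_e\Vcal_e = U^m$ and is propagated in time by the relaxation dynamics (which reduces to checking that the evolution equations for $\Pi^\parallel_e/E^m_e$ and $\Zcal_{ee}$, and for $U^m$ and $E^m_e\Vcal_e$, coincide once one uses $D_tE^m_e = D_tF^\perp_f = 0$).
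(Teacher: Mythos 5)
Your proposal is correct and follows essentially the same route as the paper, whose proof merely states that \eqref{eq:SVUCM0detaileul1Drelaxedterdiagonal} follows from \eqref{eq:SVUCM0detaileul1Drelaxedter} by direct computation and records the single identity $U^m \pm c H^{-1} E^m_e = U^m \mp \Pi^m_e/c \pm c ( H^{-1} + \Zcal_{ee}/c^2 ) E^m_e$ --- which is precisely the constraint $\Zcal_{ee}=\Pi^\parallel_e(E^m_e)^{-1}$ that you invoke (together with its $\Vcal_e$ analogue) to close the last transport equation. Your material-derivative bookkeeping is exactly the ``direct computation'' the paper leaves implicit, so the two arguments coincide in substance.
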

\begin{proof}
One obtains \eqref{eq:SVUCM0detaileul1Drelaxedterdiagonal} from \eqref{eq:SVUCM0detaileul1Drelaxedter} by direct computation
and for the equivalence it suffices to see that, under assumptions of  Prop.~\ref{prop:reformulation}, it holds:
$$
 U^m \pm c H^{-1} E^m_e = U^m \mp \Pi^m_e/c  \pm c ( H^{-1} + \Zcal_{ee} / c^2 ) E^m_e \,.
$$
\end{proof}

In particular, the Lagrangian eigenstructure is preserved by the mapping:
% reformulation\footnote{again without repeating indices, unlike in Einstein convention}
% quasi-diagonal : weakly-hyperbolic ! Jordan block
\eqref{eq:SVUCM0detaileul1Drelaxedterdiagonal} shows that the system % \eqref{eq:SVUCM0detaileul1Drelaxedter} + compatible initial conditions
has 3 linearly degenerate waves with speed
\begin{multline}
% (U^m - c H^{-1} E^m_e)_l
\lambda_- = \left( U^m - c H^{-1} E^m_e % = U^m + \frac{\Pi^m_e}c  - c ( H^{-1} + \frac{\Zcal_{ee}}{c^2} ) E^m_e
\right)_l
= \left( U^m + \frac{\Pi^m_e}c - c ( H^{-1} + \frac{\Zcal_{ee}}{c^2} ) E^m_e\right)_l^*
\\
\lambda_+ = \left( U^m + c H^{-1} E^m_e % = U^m - \frac{\Pi^m_e}c  + c ( H^{-1} - \frac{\Zcal_{ee}}{c^2} ) E^m_e
\right)_r
= \left( U^m - \frac{\Pi^m_e}c + c ( H^{-1} - \frac{\Zcal_{ee}}{c^2} ) E^m_e\right)_r^*
% = (U^m + c H^{-1} E^m_e)_r
\\
\lambda_0 = (U^m)_l^{*} = (U^m)_r^{*}
\end{multline}
that are % well
ordered $\lambda_- \le \lambda_0  \le \lambda_+$ % e.g. on choosing $c$ large enough such that
if % provided 
$E^m_e\ge 0$. % $ (E^m_e)_r,(E^m_e)_l > 0$
Moreover, % using Prop.~\ref{prop:reformulation}
the 3-wave solutions to \eqref{eq:SVUCM0detaileul1Drelaxedter} % or equiv. \eqref{eq:SVUCM0detaileul1Drelaxedterdiagonal}
that are initialized at $t^n$ with left/right values in $V_i$/$V_j$ % resp. 
such that
\begin{equation}
\label{eq:initialization}
\Pi^\delta_e = \Pcal^\delta_e 
\qquad
\Vcal_e = U^\parallel F^\perp_f % = U^i \sigma_{ij} F^j_f
\qquad
\Zcal_{ee} = \Pcal^\parallel_e F^\perp_f % \Pcal^i_e \sigma_{ij} F^j_f
\qquad
E^m_eF^\perp_f\equiv 1
\end{equation}
are (formally) consistent with 1D SVM solutions % in Eulerian coordinates
provided 
\begin{itemize}
 \item[i)] choosing $\be_f$ such that $F^\parallel_f\equiv 0$ is consistent (i.e. $F^\parallel_f\approx 0$ in reality), and
 \item[ii)] the following evolution equations hold (in the smooth case for some $c^2>0$)
$$
\partial_t ( H \Pcal^\delta_e ) + \partial_m ( H U^m \Pcal^\delta_e  + HF^m_e c^2 U^\delta % cross terms are missing too
) = 0 \quad \delta \in\{\parallel,\perp\}
$$
\end{itemize}
recalling the analysis in App.~\ref{app:lagrange} % as a fully-admissible flux-vector splitting 
for SVM in Lagrangian coordinates (compare % in particular 
with the % full
equation \eqref{eq:pressureeq} satisfied by the true % smooth 
$\Pcal^\delta_e$ in Lagrangian description).
% Here, waves are assumed % to propagate unidirectionally % mainly 1D in the material coordinates.

\mycomment{There is a consistency error here to be improved by a 5wave solver}

Of course, in practice, % for the numerical computations, 
given any two neighbour cells $V_i$/$V_j$ and $\be_m=\bn_{i\to j}=\bn^\parallel$,
there is no reason why there should exists one % single 
direction $\be_f$ such that $F^\parallel_f\equiv 0$ in $V_i$/$V_j$.
However, recall that the meaning of the \emph{tensor} conservative variable $H\bF_h$ in SVM % has been chosen partly on a mathematical ground
is not purely physical. % it is a geometrical object like in \cite{DespresMazeran2005}.
For the % initialization of 
Riemann solver, one can therefore use a reconstruction $F^\delta_f \neq H^{-1} (HF^i_\alpha) n^\delta_i f_\alpha $
where $n^\delta_i$, $f_\alpha$ are respectively the coordinates % coefficients 
of $\bn^\delta$ and $\be_f$ in $(\be_x,\be_y)$ and $(\be_a,\be_b)$
provided $F^\delta_f$ retains the physics: $F^\delta_f$ are the current coordinates in $(\be_x,\be_y)$ (Eulerian description)
of a \emph{material vector} $\be_f$ attached to the reference configuration (useful in Lagrangian description).
And for similar reasons, one can also ``project'' $H\bF_h$ % after projection step !! 
at the end of the transport-projection method used in the present first step of our splitting scheme to make cell-values compatible with mass and energy conservation.

Thus, at the beginning of the transport-projection method, we reconstruct $F^\delta_{e,f}$ at each interface such that 
the mass is conserved on each side of the interface,
such that the elastic energy lost by enforcing $F^\parallel_f = 0$ is minimal, % (zero if possible) : it is not clear whether such $\be_f$ exists 
$F^\parallel_e \ge 0$ %  at the same time
and the % consistency 
error due to enforcing $F^\parallel_f = 0$ is small.
% 
% More precisely, we choose % propose a basis 
% $(\be_e,\be_f)$ with $\be_f$ such that $F^\parallel_f$ is small,
% then we ensure $F^\parallel_e \ge 0$, 
% and we finally % next actually also
% preserves mass as well as (elastic) energy ;
% this also preserves the direction of one-dimensional flows (through $F^\perp_f\equiv 1$ so $\Vcal_e = U^\parallel$ 
% in 1D flows, and $|\bF|^{-1}=H$ at interface in our 1D Riemann solver). % i.e. in the interface fluxes computed through intermediate states (used for flux computations)
%
For consistency and stability reasons, the initial values $c_{l/r}$ should also be well-chosen in the Riemann solver \eqref{eq:SVUCM0detaileul1Drelaxedter} 
see our analysis for SVM in Lagrangian coordinates in App.~\ref{app:lagrange}.
\begin{proposition}
\label{prop:stab}
Given $\be_m=\bn_{i\to j}=\bn^\parallel$ at some interface in between two % neighbouring 
cells $V_i/V_j$, define $(\be_e,\be_f)$
% such that $F^\parallel_f$ is small --  $F^\parallel_f = 0$ in $V_i/V_j$
such that $\be_f$ is the arithmetic mean of two right eigenvectors of $\bF|_{V_i},\bF|_{V_j}$ 
with singular value closest to 1 and $F^\parallel_e \ge 0$.
% $$
%  \be_e = \frac{\be_m(H\bF|_{V_i} + H\bF|_{V_j})}{\|\be_m(H\bF|_{V_i} + H\bF|_{V_j})\|}  NO (only latest condition)
% $$
% + flux compatible with $|\bF|^{-1}=H$ hence $F^\perp_f\equiv 1$ so $\Vcal_e = U^\parallel$.
\\
Next, reconstruct $F^\parallel_e = (\lambda H)^{-1}$, $(F^\parallel_f,F^\perp_f) = (0,\lambda )$, $F^\perp_e $ in $V_i/V_j$ such that
\beq
\label{eq:Fperpe}
%     F^\parallel_e A_{ee} F^\parallel_e  % ( + 2 F^\parallel_e A_{ef} F^\parallel_f + F^\parallel_f A_{ff} F^\parallel_f = 0 )
% +   F^\perp_f A_{ff} F^\perp_f + 2 F^\perp_e A_{ef} F^\perp_f + F^\perp_e A_{ee} F^\perp_e 
%
A_{ee} (\lambda H)^{-2} + A_{ff} \lambda^2 + F^\perp_e (2 A_{ef}) \lambda  + A_{ee} (F^\perp_e)^2 = F^i_\alpha A_{\alpha\beta} F^i_\beta =: \Ecal
\eeq
where $A_{ee},A_{ef},A_{ff}$ are the coefficients of the tensor $\bA_h$ in the basis $(\be_e,\be_f)$, % after rotation
and $\lambda\ge0$ is chosen close to 1 % <<<<<<<<<<<<<<<<<<<<<<<<<<
in $ \left|\lambda^2 - \frac{A_{ee}\Ecal}{2|\bA|}\right| \le \frac{ A_{ee} \sqrt{\Ecal^2-4|\bA|/H^2}}{2|\bA|} $, hence
$$
F^\perp_e = \left( - A_{ef}\lambda \pm \sqrt{ A_{ef}^2 \lambda^2 + A_{ee} ( \Ecal % F^i_\alpha A_{\alpha\beta} F^i_\beta
- A_{ee} (\lambda H)^{-2}- A_{ff}\lambda^2) } \right)/A_{ee}
$$
is a real solution % $F^\perp_e$  close to the value obtained after rotation ???????????????????
to \eqref{eq:Fperpe}.
% the discrimant of the quadratic equation \eqref{eq:Fperpe} for $F^\perp_e$ is positive
% $$
%  ( A_{ef}^2-A_{ff}A_{ee} ) \lambda^4 - A_{ee}^2 H^{-2}  + A_{ee}  \Ecal \lambda^2 \ge 0
% $$
(We choose $x-\log x-1$ as distance to 1 for $x>0$.)

Then, the 1D Riemann solver \eqref{eq:SVUCM0detaileul1Drelaxedter} % for $V_i/V_j$ 
initialized % as suggested above in 
with \eqref{eq:initialization} and Lemma \ref{lem:c} for $c$ is fully-admissible in the sense of Prop.~\ref{prop:decay},
for $\tilde S$ % = $H\tilde E-Hlog()$ we recall, for the \tilde q subsystem
and any direction $\bn=\bn_{i\to j}$ with flux $\tilde G_{\bn}=(H \tilde S \Vcal_e + \Pi^\delta_e U^\delta)|_{\Gamma_{ij}}$
\emph{provided $H=|\bF_h|^{-1}$} in $V_i/V_j$.
\end{proposition}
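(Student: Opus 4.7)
The plan is to reduce Proposition~\ref{prop:stab} to the corresponding admissibility result established for SVM in Lagrangian coordinates in Appendix~\ref{app:lagrange}, exploiting Proposition~\ref{prop:reformulation} as the bridge between the two descriptions. The hypothesis $H=|\bF_h|^{-1}$ in the two cells is exactly the involution that makes the Lagrangian-to-Eulerian mapping exact rather than only formal, so that the characteristic structure and the entropy balance transfer faithfully.

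First, I would verify that the reconstruction step is well-defined and preserves the essential invariants. Taking $\be_f$ as a normalized average of right singular vectors of $\bF_h|_{V_i}$ and $\bF_h|_{V_j}$ with singular value closest to $1$ makes $F^\parallel_f\approx 0$ small, so enforcing $F^\parallel_f\equiv 0$ only perturbs the elastic energy by a controlled amount; the constraint $F^\parallel_e\ge0$ together with $E^m_eF^\perp_f\equiv 1$ with $F^\perp_f=\lambda>0$ makes $E^m_e=1/\lambda>0$ so that the wave speeds are correctly ordered as noted after Proposition~\ref{prop:reformulation}. The choice $F^\parallel_e=(\lambda H)^{-1}$ together with \eqref{eq:Fperpe} is designed so that the reconstructed frame $(F^\parallel_e,F^\perp_e,F^\parallel_f,F^\perp_f)$ exactly preserves the horizontal Jacobian $|\bF_h|=H^{-1}$ and the elastic trace $\tr(\bF_h\bA_h\bF_h^T)=\Ecal$, i.e.\ it preserves mass and the relevant part of $\tilde E$.

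Second, under those reconstructions and the initialization \eqref{eq:initialization}, Proposition~\ref{prop:reformulation} shows that the relaxed system \eqref{eq:SVUCM0detaileul1Drelaxedter} decouples into six Riemann invariants transported along the three characteristic speeds $\lambda_-\le\lambda_0\le\lambda_+$ given just after it. This is the Eulerian image, via the Piola identity $\partial_m(HF^m_e)=0$ enforced through \eqref{stronglyconsistent}--\eqref{eq:cofGevolution}, of the Suliciu-type flux-splitting constructed in Appendix~\ref{app:lagrange} (Lemma~\ref{lem:entropylag3}). Consequently the admissibility domain $H>0$, $\bA_h=\bA_h^T>0$, $A_{cc}>0$ is preserved cellwise as a convex combination (in the sense of Proposition~\ref{prop:convex}), and the entropy $\tilde S=H\tilde E-H\log(|\bA_h|A_{cc})$ satisfies \eqref{solversufficientcondition} with the flux $\tilde G_{\bn}=(H\tilde S\Vcal_e+\Pi^\delta_e U^\delta)|_{\Gamma_{ij}}$, which is the Eulerian image of the Lagrangian entropy flux $U^i\Pcal^i_\alpha$ restricted to the normal direction.

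Third, Lemma~\ref{lem:c} supplies the subcharacteristic speed $c$ needed to guarantee that the Lagrangian Suliciu relaxation dissipates the Lagrangian entropy across each of the three waves; this is what is being invoked to certify \eqref{solversufficientcondition} for $\tilde S$ in the Eulerian description. The main obstacle—and the place where the hypotheses of Proposition~\ref{prop:stab} are really used—is the existence of a real $F^\perp_e$ in \eqref{eq:Fperpe}, equivalently the positivity of the discriminant written in the statement as $\Ecal^2-4|\bA|/H^2\ge 0$ together with the bracketing of $\lambda^2$ around $A_{ee}\Ecal/(2|\bA|)$. I would close this point by observing that $\Ecal\ge 2\sqrt{\det(\bF_h\bA_h\bF_h^T)}=2\sqrt{|\bA|}|\bF_h|=2\sqrt{|\bA|}/H$ is the AM--GM inequality for the eigenvalues of the symmetric positive definite matrix $\bF_h\bA_h\bF_h^T$, so the discriminant is non-negative on the whole admissibility domain, and the $\lambda$ minimizing $x-\log x-1$ within the allowed interval then produces an admissible reconstruction, completing the transfer from Appendix~\ref{app:lagrange}.
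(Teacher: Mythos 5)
Your proposal is correct and follows essentially the same route as the paper's own proof: establish existence of the reconstruction via the trace--determinant (AM--GM) inequality $\Ecal^2\ge 4|\bA|/H^2$ for the symmetric positive definite matrix built from $\bF_h$ and $\bA_h$ under the involution $H=|\bF_h|^{-1}$, then transfer admissibility and the discrete entropy inequality from the fully-admissible Lagrangian Riemann solver of Appendix~\ref{app:lagrange} (Lemma~\ref{lem:entropylag3}, with Lemma~\ref{lem:c} for $c$) using the structural equivalence of Proposition~\ref{prop:reformulation}. The extra detail you supply on preservation of $|\bF_h|=H^{-1}$ and of the elastic trace by the reconstruction is consistent with, and slightly more explicit than, the paper's argument.
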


\begin{proof}
First, note that the reconstruction using $(\be_e,\be_f)$ always exists. % since orientaition does not matter in eigenvcetor
In particular, $\Ecal^2\ge 4|\bA|/H^2$ holds when $H = |\bF_h|^{-1}$. Indeed, the inequality rewrites
$$
|\tr(\bF_h^T\bA\bF_h)|^2 \ge 4 |\bF_h^T\bA\bF_h|
$$
which obviously holds for any symmetric positive matrix $\bF_h^T\bA\bF_h$. % use expression with eigenvalues !!

Second, note the solution has exactly the same structure with same intermediate states as in Lagrangian coordinates.
Then, admissibilty $H>0$ as well as the discrete entropy inequality % similarly to \eqref{eq:SVUCM0detaillag1Drelaxedter} 
(thus, full-admissibility) % of the Riemann solver 
follow from the full-admissibility of the Riemann solver in Lagrangian coordinates, 
provided it is \emph{well-initialized} for the equivalence to hold.
% Whitham's condition \eqref{whitham0} is satisfied here considering our choice of parameters.
% \beq%gin{multline}
% \label{eq:discreteentropy}
% S(q_i^{n+1,-}) - S(q_i^n) + \tau^n \sum_j \frac{|\Gamma_{ij}|}{|V_i|} \tilde G_{\bn_{i\to j}}(\Oij^{-1}q_i^n,\Oij^{-1}q_j^n)
% % \\ = S(q_i^{n+1}) - S(q_i^n) + \tau^n \sum_j \frac{|\Gamma_{ij}|}{|V_i|} \bG(q_i^n,q_j^n)\cdot\bn_{i\to j}  \\
% \le % \tau^n \sum_j \frac{|\Gamma_{ij}|}{|V_i|} \bG(q_i^n)\cdot\bn_{i\to j} \equiv 
% 0 \,.
% \eeq%nd{multline}
% Using the Lagrangian analysis in App.~\ref{app:lagrange}, the proof is straightforward following e.g. the lines of \cite[Lemma 2.20]{bouchut-2004}.
%
\end{proof}

To ensure $H=|\bF_h|^{-1}$ in each cell $V_i$, we propose the following ``projection'' at the end of the transport-projection method.
We modify the singular values $\lambda_{a'},\lambda_{b'}$ in the SVD decomposition of $\bF_h = U^T \mathop{\rm diag}(\lambda) V$ such that in each cell,
% at the end of the transport-projection method, as opposed to at each interface at the beginning
the mass $(\lambda_{a'}\lambda_{b'})^{-1} =|\bF_h|^{-1}=H$ is conserved 
and the loss $|\lambda_{a'} V_{a'\alpha}A_{\alpha\beta} V_{a'\beta} + \lambda_{b'} V_{b'\alpha}A_{\alpha\beta} V_{b'\beta} - \tr(\bF_h\bA\bF^T_h)|$
of elastic energy % $\tr(\bF_h\bA\bF^T_h)$ 
is minimized. 
When there exist more than one solution, we choose the one closest to $\bF_h$ before projection in ``energy norm'' $\tr(\bF_h\bA\bF_h^T)$.
Of course, if the transport-projection method for the 2D FV discretization preserved $H=|\bF_h|^{-1}$ we would not need that additional step.
But this preservation is a well-known difficulty in the discretization of hyperbolic systems with involutions. % and we will concentrate on that later.

% Note that the solver preserves directionality of the flow ?? <<<<<<<<<<<<<<<<<<<<<<<<

\smallskip

% $H\bA_h$ solution to
% \beq
% \label{eq:SVUCM0detailsub2}
% \begin{aligned}
% & \partial_t H = 0
% \\ 
% & \partial_t U^j = 0
% \\ 
% & \partial_t (H A_{\alpha\beta}) + \partial_j ( H A_{\alpha\beta} U^j ) = 0
% \end{aligned}
% \eeq

In the other sub-step $\bA_h$ can be updated as a solution to transport equations
% \beq
% \label{eq:SVUCM0detailsub2}
% \begin{aligned}
% % % \partial_t A_{\alpha\beta} + ( U^j\partial_j ) A_{\alpha\beta} = 0
% % \partial_t A_{aa} + ( U^j\partial_j ) A_{aa} = 0
% % \\
% % \partial_t A_{bb} + ( U^j\partial_j ) A_{bb} = 0
% % \\
% % \partial_t A_{ab}/\sqrt{A_{aa}A_{bb}} + ( U^j\partial_j ) A_{ab}/\sqrt{A_{aa}A_{bb}} = 0
% % \partial_t A_{\alpha\beta} + ( U^j\partial_j ) A_{\alpha\beta} = 0
% \partial_t A_{aa} + \partial_j ( U^j A_{aa} ) -  ( \partial_j U^j  )A_{aa}= 0
% \\
% \partial_t A_{bb} + \partial_j ( U^j A_{bb} ) -  ( \partial_j U^j  )A_{bb}= 0
% \\
% \partial_t ( A_{ab}/\sqrt{A_{aa}A_{bb}} ) + \partial_j ( U^j A_{ab}/\sqrt{A_{aa}A_{bb}} ) -  ( \partial_j U^j  ) A_{ab}/\sqrt{A_{aa}A_{bb}} = 0
% \end{aligned}
% \eeq
with a scheme preserving the convex domain of $A_{aa}>0,A_{bb}>0,A_{ab}/\sqrt{A_{aa}A_{bb}}\in(-1,1)$.
% like e.g. an % first-order 
% upwind scheme once $\bU$ is given, 
% say similar to the Riemann problem solution at interfaces for instance.
As a matter of fact, if we use an upwind scheme with $\bU$ given at interfaces by the Riemann problems of the first sub-step,
which is consistent with the decrease of free-energy in each cell thanks to the convexity of $\tilde S$ with respect to $A_{aa},A_{bb},A_{ab}/\sqrt{A_{aa}A_{bb}}$,
% $$
% A_{aa} - \log A_{aa} + A_{bb} - \log A_{bb} - \log\left(1-A_{ab}/\sqrt{A_{aa}A_{bb}}\right)
% $$
% we have % \eqref{eq:discreteentropy2}
% \beq%gin{multline}
% \label{eq:discreteentropy3}
% \tilde S(q_i^{n+1,-}) - \tilde S(q_i^n) + \tau^n \sum_j \frac{|\Gamma_{ij}|}{|V_i|} \tilde G_{\bn_{i\to j}}(\Oij^{-1}q_i^n,\Oij^{-1}q_j^n)
% % \\ = S(q_i^{n+1}) - S(q_i^n) + \tau^n \sum_j \frac{|\Gamma_{ij}|}{|V_i|} \bG(q_i^n,q_j^n)\cdot\bn_{i\to j}  \\
% \le % \tau^n \sum_j \frac{|\Gamma_{ij}|}{|V_i|} \bG(q_i^n)\cdot\bn_{i\to j} \equiv 
% 0 \,.
% \eeq%nd{multline}
then the second sub-step can be done at the same time as the first one.

% Note that we choose A_cc and not Acc_1/4 since fo rthe first-_substep_ this is the same !!

\smallskip

\underline{Second step}: Source terms can be integrated as usual with a backward formula in a second time-splitting step.

\mycomment{ 
Also : there exist various transport schemes with U or $\Vcal$ for $A,A_c$ !! 
\begin{itemize}
\item a 3-wave solver can always be constructed that preserves entropy dissipation, positivity, but it neglects many ``multi-d pressure terms''
and it should capture only some 2D solutions
\item a 5-wave solver might be constructed with $c_\parallel^2 > c_\perp^2 > 0$ % that is also
but transfer from Lagrange to Euler coordinates is more difficult to compute (and it adds conditions to generate $c_\parallel^2 > c_\perp^2 > 0$ ?)
\end{itemize}

\begin{remark}[Riemann through Lagrangian reformulation]
To close \eqref{eq:SVUCM0detailsub1}, another possibility is to % actually !
compute a \emph{geometric} mapping $\tilde\phi$ (somewhat ``arbitrarily'') from Eulerian to Lagrangian coordinates: 
an Arbitrary Lagrangian-Eulerian (ALE) choice that establishes equivalence with a Lagrangian formulation using
\beq
\label{eq:SVUCM0detailsub1lag}
\begin{aligned}
& \tilde H \partial_t H^{-1} + (\tilde H \tilde U^j\partial_j) H^{-1} - (\tilde H \tilde F^j_\alpha \partial_j )( U^j \sigma_{ij}\sigma_{\alpha\beta}F^i_\beta ) \approx 0
\\
& \tilde H \partial_t F^i_\alpha + (\tilde H \tilde U^j\partial_j) F^i_\alpha - (\tilde H \tilde F^j_\alpha \partial_j ) U^i \approx 0
\\
& \tilde H \partial_t U^i + (\tilde H \tilde U^j\partial_j) U^i + (\tilde H \tilde F^j_\alpha \partial_j ) \Pcal^i_\alpha % (gH/2 + GH^2 A_{cc}) \sigma_{ij}\sigma_{\alpha\beta}F^i_\beta - G A_{\alpha\beta} F^i_\beta
\approx 0
\\ 
& \tilde H \partial_t A_{cc} + (\tilde H \tilde U^j\partial_j) A_{cc} \approx 0 % H(H^2-A_{cc})/\lambda
\\ 
& \tilde H \partial_t A_{\alpha\beta} + (\tilde H \tilde U^j\partial_j) A_{\alpha\beta} \approx 0
\end{aligned}
\eeq
with 
$
\tilde U^j=\partial_t \tilde\phi^{-1}_j \quad \tilde F^j_\alpha =\partial_\alpha \tilde\phi^{-1}_j % \quad \tilde H|\tilde \bF|=1
$
% (BV) 
smooth enough to define a displacement % bi-Lipschitz diffeomorphism ! solution in the distribution sense to 
$\tilde\phi^{-1}$, see e.g. \cite{wagner-1987,dafermos-1993},
or % equivalently 
$\bG_h = \grad_x \tilde\phi \quad \bG_h\tilde\bU=-\partial_t \tilde\phi$
given % that 
$\tilde H\tilde\bF_h = \mathop{\rm Cof}\bG_h^T$ % is the transposed co-factor matrix of a regular (smooth ?) inverse deformation gradient
% (BV) 
smooth enough (where $|\bG_h|\approx\tilde H >0$).
Note indeed that \eqref{eq:SVUCM0detailsub1lag} is nothing but the evolution equations \eqref{eq:SVUCM0detailsub1} in the coordinate system given by $\tilde\phi^{-1}$
(using Piola's identities). % in a strong sense !!
So FV approximations to % the approximate SVM system 
\eqref{eq:SVUCM0detailsub1}
can be built using 1D solutions % ``plane wave'' functions of $t$ and $\bx\cdot\bm$
to Riemann problems for \eqref{eq:SVUCM0detailsub1lag}.

However, the mapping solution should be a \emph{simple enough} approximation of the SVM system.
For instance, one could solve the \emph{linearized} SVM system in ``displacement formulation'' numerically (assuming small displacement and small deformation) % Bathe1996
typically such that, in Lagrangian coordinates, the mapping preserves the shape of the volume elements of the Eulerian tesselation.
Moreover, computing a mapping $\tilde\phi$ requires initial and boundary conditions that are not easily kept consistent with the Eulerian FV solution $q$ as time evolves:
on loosing the Eulerian advantage of a fixed mesh during one ALE step, one would next need ``rezoning/remapping'' phases % to keep compatibility between steps 
which is highly involved see e.g. \cite{Barlow2016603}. 
This is why we % therefore 
consider another possibility, that takes full advantage of a Eulerian fixed mesh to apply a FV method. 
Here, to construct a Eulerian FV method,
the single information that one actually needs from the new unknown $\tilde H$, $\tilde\bU$, $\tilde\bF_h$ is how to map (fluxes at)
Eulerian cell interfaces to Lagrangian ones, with a view to using a (fully-admissible) Riemann solver that captures accurately contact discontinuities.
\end{remark}
}

\section{Conclusion} % 
% Numerical illustration % results
% and discussion and conclusion
\label{sec:num}

To conclude, we show some numerical simulations illustrating our new model after % through 
FV discretization, we discuss the results and we list some perspectives.

% Denoting $R=\sqrt{x^2+y^2}=2r-r_0$, $r=\sqrt{a^2+b^2}=(R+r_0)/2$,
% $$
% (x,y) = \begin{cases}
% (a,b) & r<r_0
% \\
% % (2(r-r_0)+r_0)/r = 
% (2-r_0/r) % r^{-1}
% (a,b) & r>r_0
% \end{cases}
% $$
% hence
% $$
% \bF_h = \begin{pmatrix}
% 2-r_0 b^2/r^3 & r_0 ab/r^3
% \\
% r_0 ab/r^3 & 2-r_0 a^2/r^3
% \end{pmatrix}
%  = \begin{pmatrix}
% 2-2r_0y^2/R^2(r_0+R) & 2r_0xy/R^2(r_0+R)
% \\
% 2r_0xy/R^2(r_0+R) & 2-2r_0x^2/R^2(r_0+R)
% \end{pmatrix}
% $$
% 
% $$
% |\bF_h| % = \frac{4}{R^4(1+R)^2} \left( (R^2(1+R)-r_0y^2)(R^2(1+R)-r_0x^2)-r_0^2x^4y^4 \right)
% = 4 - \frac{4r_0}{r_0+R}
% $$
% note $H = |\bF_h|^{-1} = .5$ at $R=r_0

\begin{figure}[htb]\hspace{-2cm}
\begin{tabular}{ll}
\includegraphics[scale=.4]{./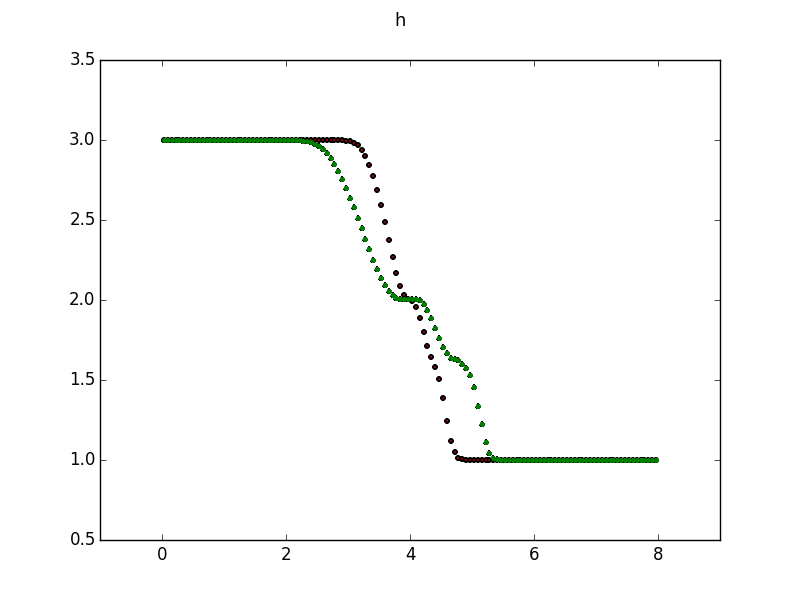} & \includegraphics[scale=.4]{./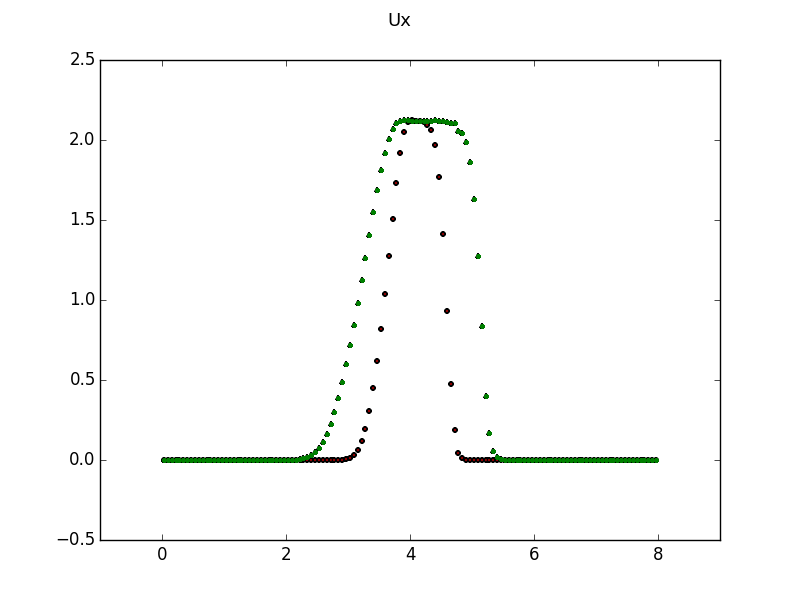}
\\
\includegraphics[scale=.4]{./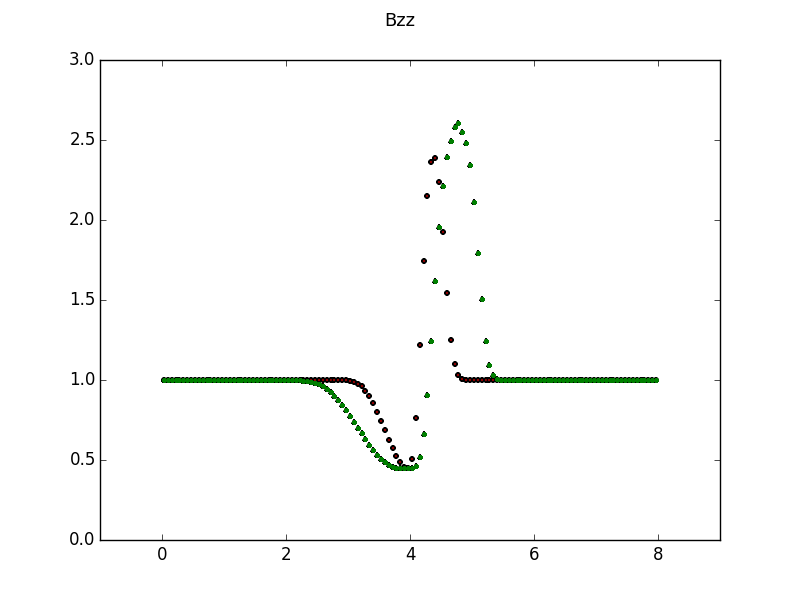} & \includegraphics[scale=.4]{./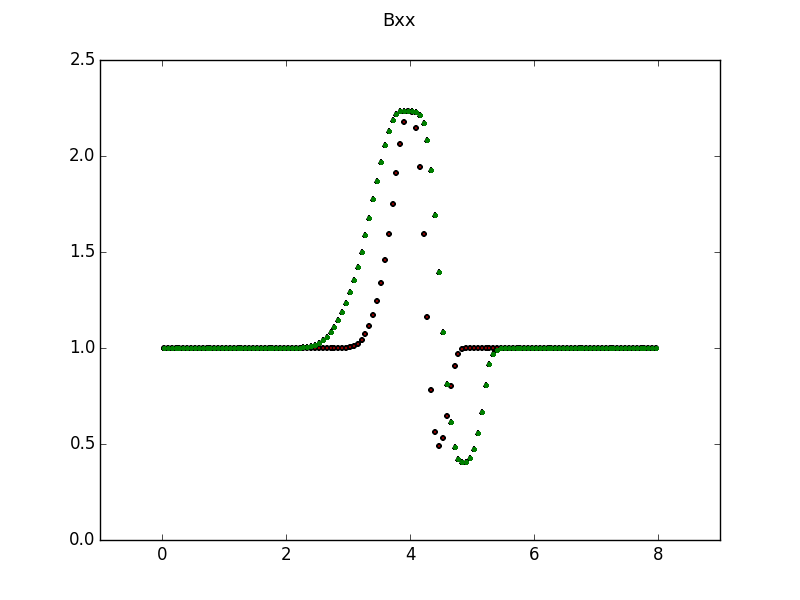}
\end{tabular}
\caption{\label{fig:SVM3_case1_G1}
Test case 1: 1D slice of the variables $H,U^x,B_{zz},B_{xx}$ (from top to bottom, left to right) at times $t=.1$ and $t=.2$ along $x\in[0,8]$ 
}
\end{figure}

\paragraph{Test case 1}
First, recall that our model contains the 1D model SVUCM that was derived in \cite{bouchut-boyaval-2013}:
SVUCM is a closed subsystem of our new model SVM, that admits solutions preserving the translation-invariance of initial conditions.
Moreover, our 1D Riemann solver is also similar to the one constructed in \cite{bouchut-boyaval-2013}.
And our face and cell reconstructions of the deformation gradient variable $\bF_h$ a priori
preserve the translation-invariance of an initial condition on a 1D-conforming grid:
$HF^x_a = 1 = F^y_b$, $F^x_b = 0 = F^y_a$ is preserved by time-evolution on a Cartesian grid $\be_x,\be_y$.

But our FV discretization of SVM does not use exactly the same variables as the one for SVUCM in \cite{bouchut-boyaval-2013}:
$HA_{aa}$, $HA_{cc}$ is used here, 
while $HB^{xx} = HF^x_aA_{aa}F^x_a \equiv A_{aa}H^{-1}$, $HB^{zz} = HF^z_cA_{cc}F^z_c \equiv A_{cc}H^3$ was used in \cite{bouchut-boyaval-2013}.

Now, the 2D numerical results obtained here in the translation-invariant case $HF^x_a = 1 = F^y_b$, $F^x_b = 0 = F^y_a$, $A_{ab} = 0$, $A_{bb} = 1$
with a $2^7\times2^7 = 128\times 128$ Cartesian grid for $(x,y)\in[0,8]^2$, $t\in[0,.2)$ and an initial condition 
$$
H = \begin{cases}
3 & ;\ x<4
\\
1 & ;\ x>4
\end{cases}
$$
at rest $U^x = 0 = U^y$, $A_{aa} = H^2 = A_{cc}^{-1}$ with $G=1$, $\lambda=.1$, $g=10$
compare well with the 1D results in \cite[section 5.5]{bouchut-boyaval-2013} (Test Case 1). % $\lambda\to\infty$
% Precisely, 
Without source term, the 1D solution consists exactly in a left-going rarefaction wave, a right-going shock wave,
and a contact-discontinuity wave \cite{boyaval-2018}. 
% With a non-zero source term $\lambda=.1$, the solution also compares well -- what about sensitivity to SVD ??
% And in the non-elastic case $G=0$, our Riemann solver reduces to the HLLC solver of \cite{bouchut-2003} equivalent to (a Eulerian mapping of) Suliciu relaxation.
Here, $\lambda$ is quite large in comparison with $T=.2$ and this is as well-captured in Fig.~\ref{fig:SVM3_case1_G1} as in \cite{bouchut-boyaval-2013}.
Note however that the latter translation-invariant 2D solution is by no way the unique,
and we have indeed observed that other solutions could be captured depending on % our implementation of 
the cell reconstruction at the end of the transport-projection method.

% \begin{figure}
% %\includegraphics[scale=.2]{./}
% \includegraphics[scale=.35]{./}~\includegraphics[scale=.35]{./}
% \caption{\label{fig:SVM3_case1_G1}
% Test case 1: 1D-meaningful (translation-invariant) variables at two times (left,right) along $x\in[0,8]$ 
% }
% \end{figure}

\begin{figure}
\hspace{-2cm}
\begin{tabular}{ll}
\includegraphics[scale=.37]{./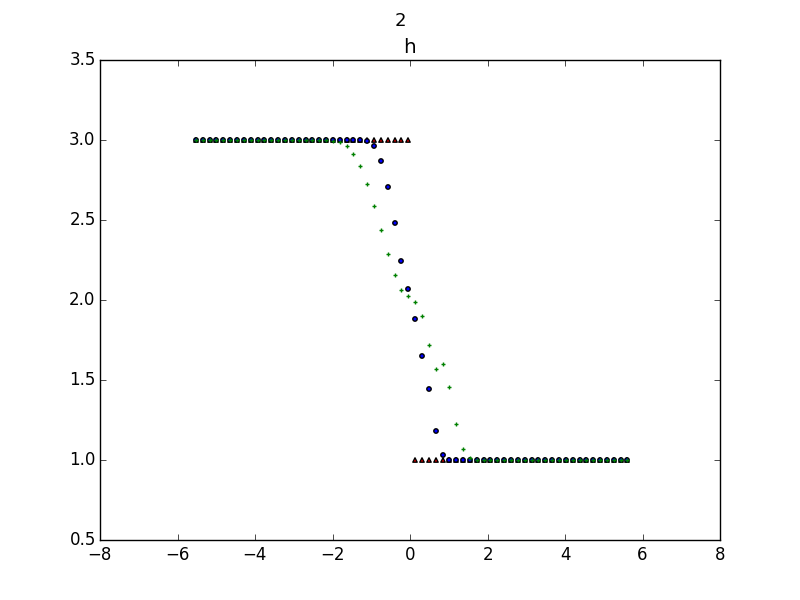} & \includegraphics[scale=.37]{./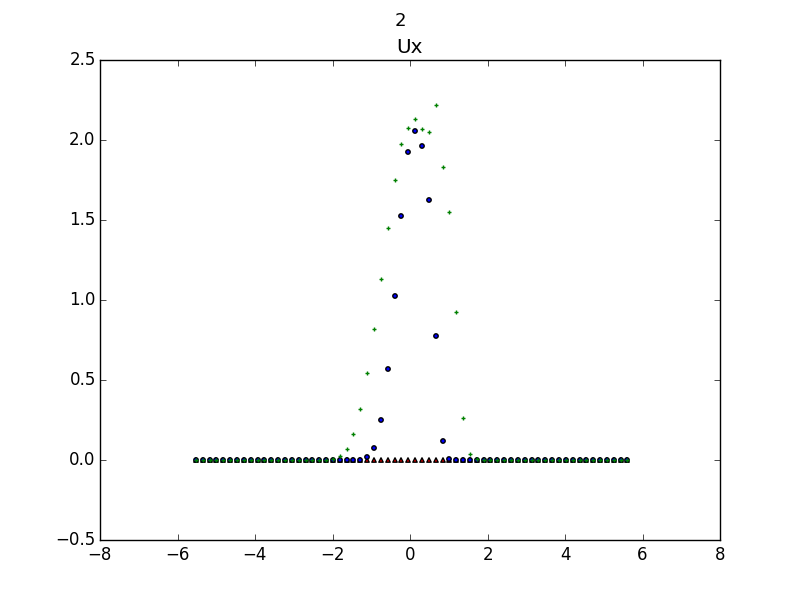}
\\
\includegraphics[scale=.37]{./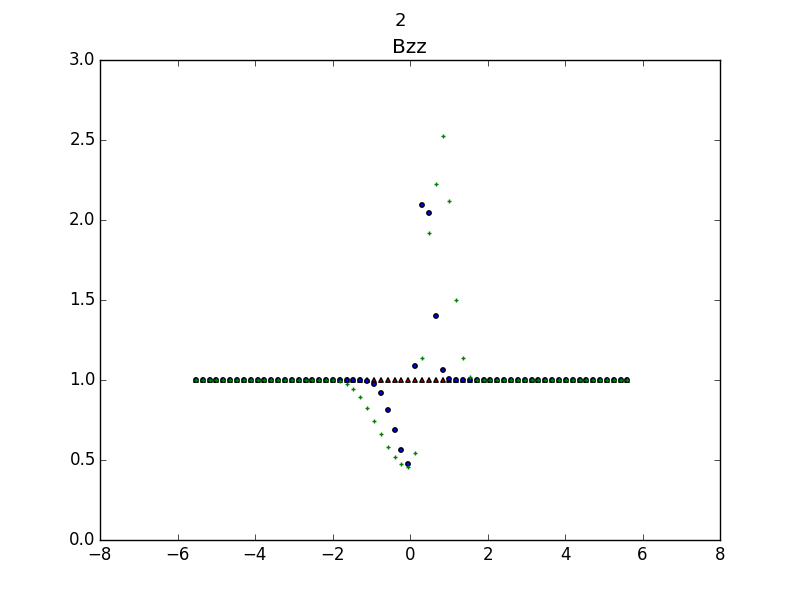} & \includegraphics[scale=.37]{./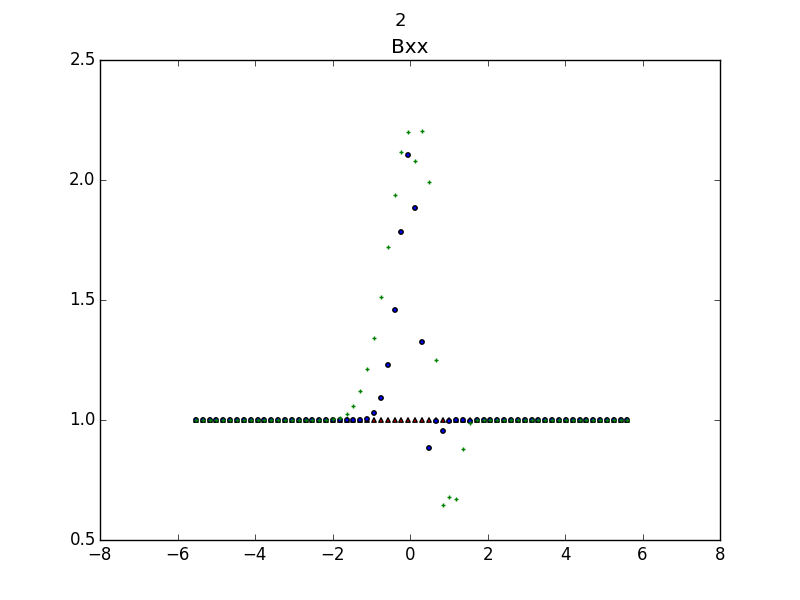}
\end{tabular}
\caption{\label{fig:SVM3_case2_G1}
Test case 2: 1D slice of the variables $H,U^x,B_{zz},B_{xx}$ (from top to bottom, left to right) at times $t=.1$ and $t=.2$ along $x\in[0,8]$ 
}
\end{figure}

\begin{figure}
\hspace{-2cm}
\includegraphics[scale=.4]{./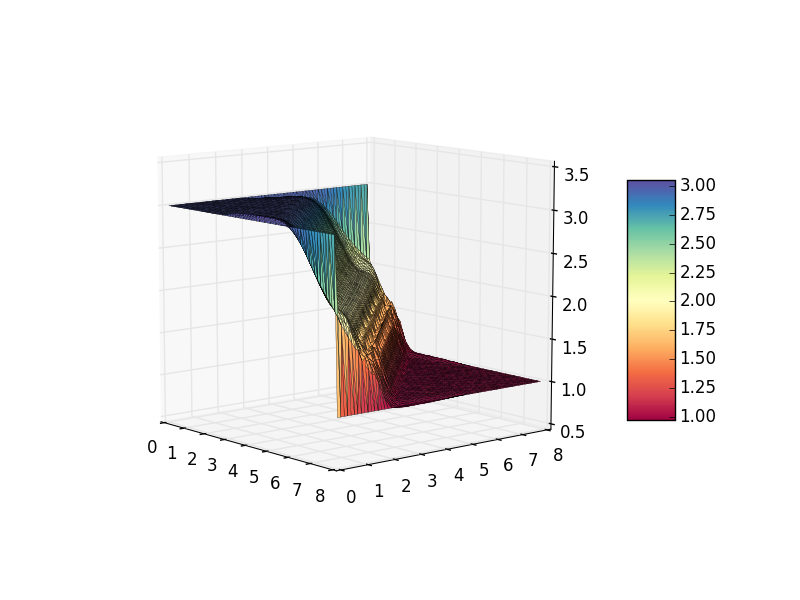}~\includegraphics[scale=.37]{./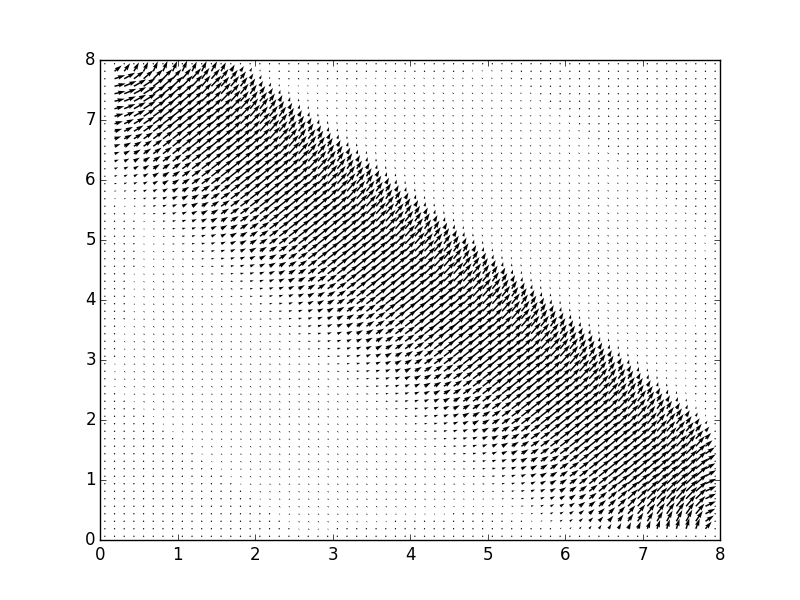}
\caption{\label{fig:SVM3_case2_G1_3D}
Test case 2: 3D view of $H$ (left) and 2D vector field $(U^x,U^y)$ (right) at time $t=.2$  
}
\end{figure}

\begin{figure}
\hspace{-2cm}
\begin{tabular}{ll}
\includegraphics[scale=.37]{./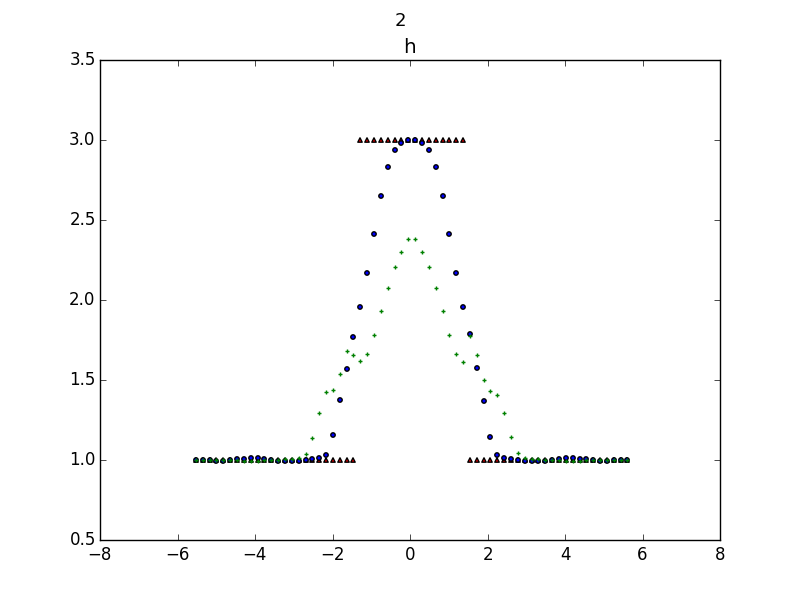} & \includegraphics[scale=.37]{./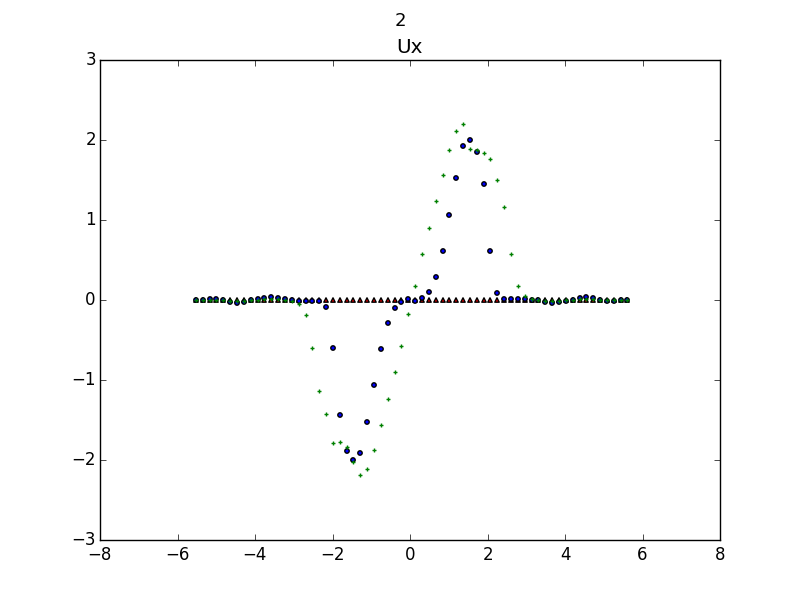}
\\
\includegraphics[scale=.37]{./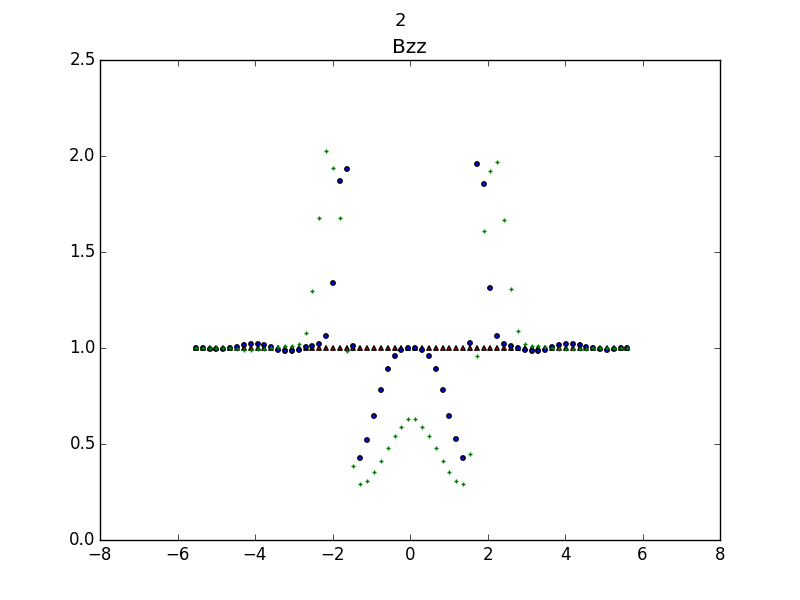} & \includegraphics[scale=.37]{./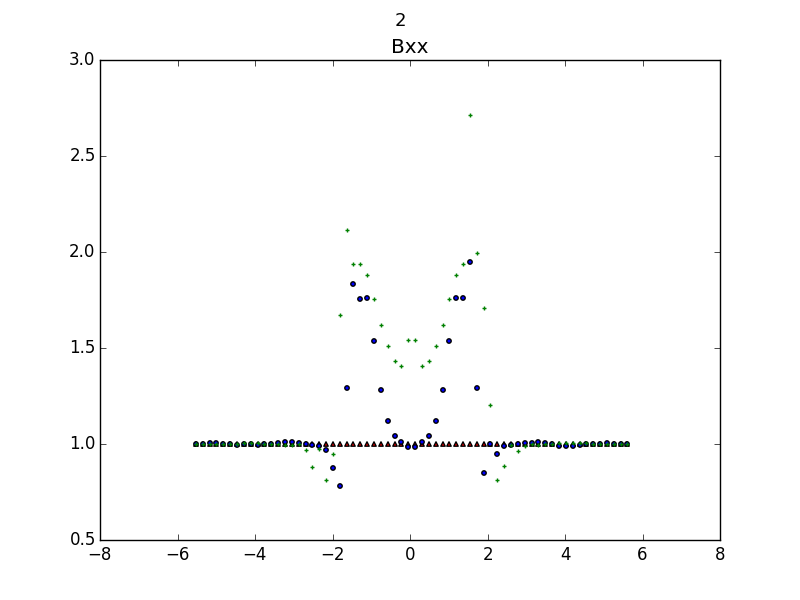}
\end{tabular}
\caption{\label{fig:SVM3_case3_G1}
Test case 3: 1D slice of the variables $H,U^x,B_{zz},B_{xx}$ (from top to bottom, left to right) at times $t=.1$ and $t=.2$ along a radial direction ($\theta$ fixed).
}
\end{figure}

\begin{figure}
\hspace{-2cm}
\includegraphics[scale=.4]{./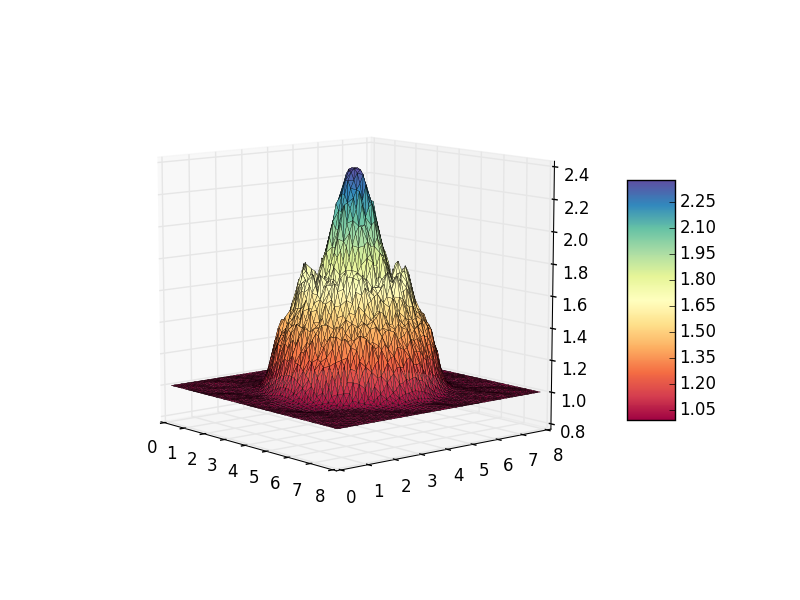}~\includegraphics[scale=.37]{./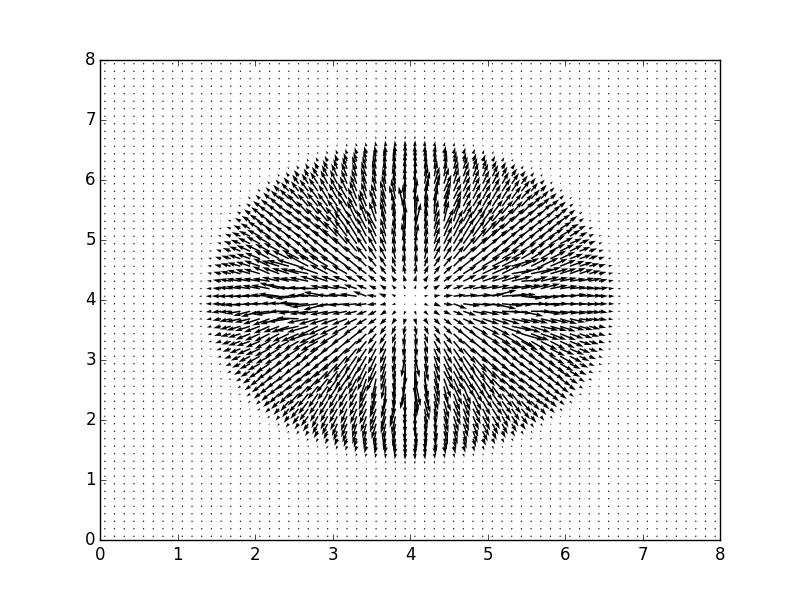}
\caption{\label{fig:SVM3_case3_G1_3D}
Test case 3: 3D view of $H$ (left) and 2D vector field $(U^x,U^y)$ (right) at time $t=.2$  
}
\end{figure}

\paragraph{Test case 2}
Second, we now run the previous test case with the initial % translation-invariant
condition rotated by $\pi/4$ on the same 2D grid (see view in Fig.~\ref{fig:SVM3_case2_G1_3D}).

Although the results in Fig.~\ref{fig:SVM3_case2_G1} compare with the previous simulations, one now sees that translation invariance is broken, see Fig.~\ref{fig:SVM3_case2_G1_3D}.
We believe it is mainly because of our mass-conforming projection of $\bF_h$ in each cell at the end of the transport-projection method,
whose implementation through re-balancing singular values in SVD decomposition does not preserve well the symmetry.
% also, when we inspect horizontal/vertical cells, notice that left/right or top/down balance does not exactly hold because A varies -- should we chnage face reconstruction ?
One consequence is that the contact discontinuity is smeared.

\paragraph{Test case 3}
We next simulate % a standard
an axisymetric test case (with rotation invariance): the collapse of a 2D column initially at rest.
Initially, we choose:
$$
H = \begin{cases}
3 & r < 1
\\
1  & r> 1
\end{cases}
$$
in polar coordinates $(\be_r,\be_\theta)$, with $\bF = H^{-1} \be_r\otimes\be_r  + \be_\theta\otimes\be_\theta$.

The results of Fig.~\ref{fig:SVM3_case3_G1} are consistent with an % presumably
axisymetric (rotation-invariant) solution and compares with the unidirectional (translation-invariant) solutions of cases 1 and 2.
Note however that the discrete solution % is of course not exactly rotation-invariant (the 2D grid is not, anyway), but still
loses some symmetry ($\pi/2$-rotation) and the contact discontinuity wiggles, see Fig.~\ref{fig:SVM3_case3_G1_3D}.

\begin{figure}
\hspace{-2cm}
% \begin{tabular}{ll}
% \includegraphics[scale=.3]{./SVM3_case4_G1_relax01/SVM3_case4_G1_relax01_U_1.png}~\includegraphics[scale=.3]{./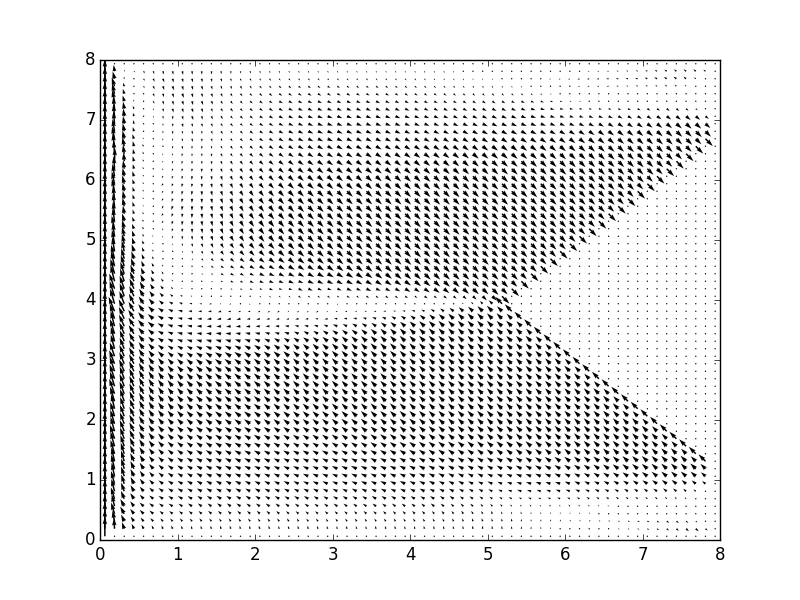}
% \\
% \includegraphics[scale=.3]{./SVM3_case4_G1_relax01/SVM3_case4_G1_relax01_U_10.png}~\includegraphics[scale=.3]{./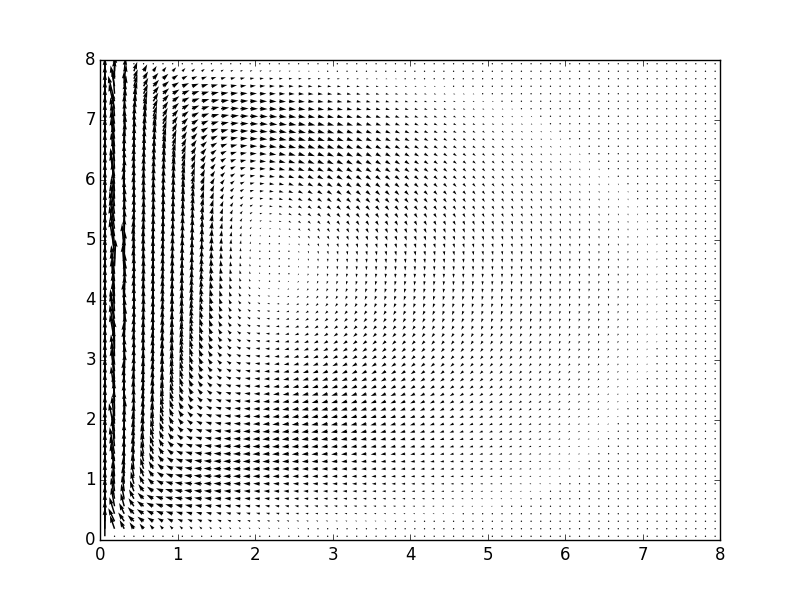}
% \end{tabular}
% \caption{\label{fig:SVM3_case4_G1_3D}
% Test case 4: 2D vector field $(U^x,U^y)$ at times $t=.1;.5;1;10$ (from top to bottom, left to right).  
% }
\includegraphics[scale=.38]{./SVM3_case4_G1_relax01/SVM3_case4_G1_relax01_U_5.png}~\includegraphics[scale=.38]{./SVM3_case4_G1_relax01/SVM3_case4_G1_relax01_U_100.png}
\caption{\label{fig:SVM3_case4_G1_3D}
Test case 4: 2D vector field $(U^x,U^y)$ at times $t=.5$ and $10$ (left/right).  
}
\end{figure}

\begin{figure}
\hspace{-2cm}
\begin{tabular}{ll}
\includegraphics[scale=.35]{./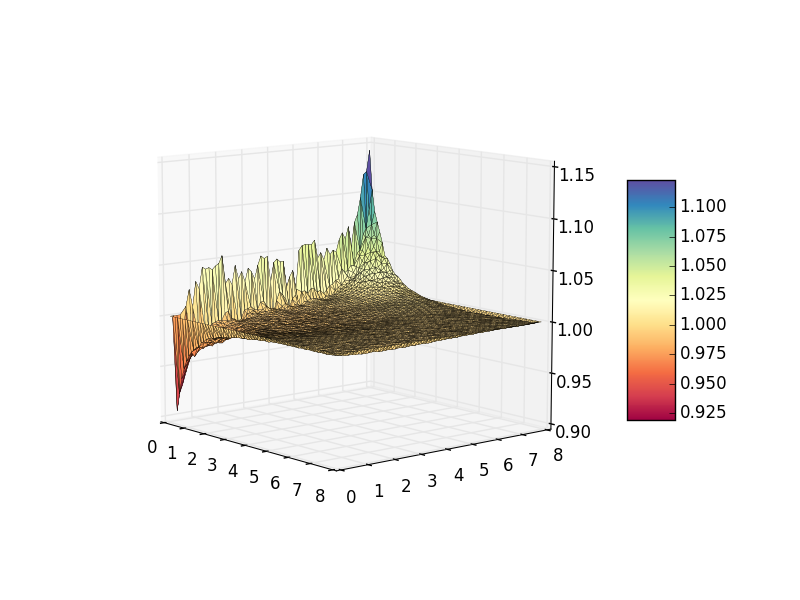} & \includegraphics[scale=.37]{./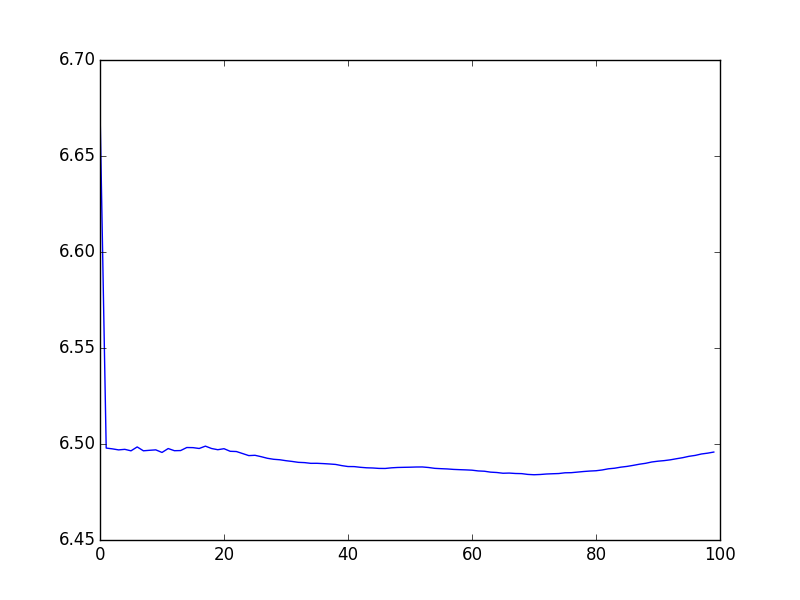}
\\
\includegraphics[scale=.37]{./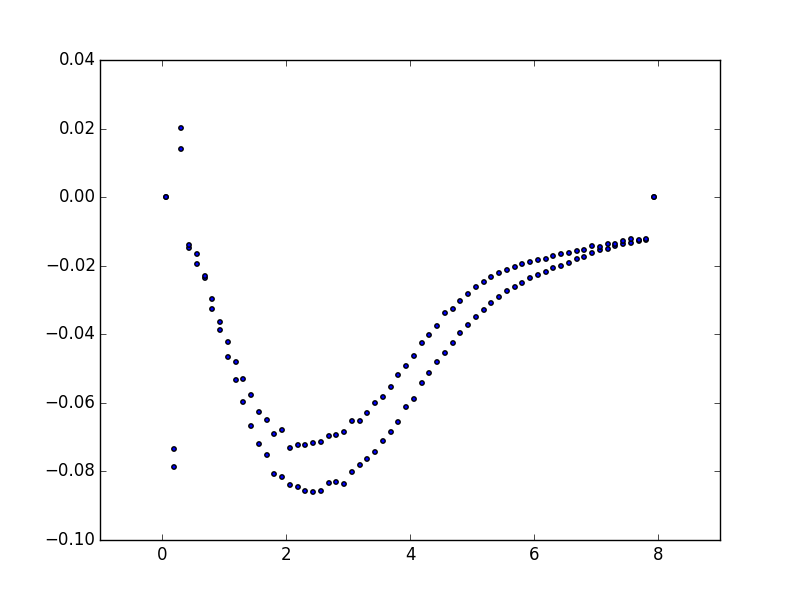} & \includegraphics[scale=.37]{./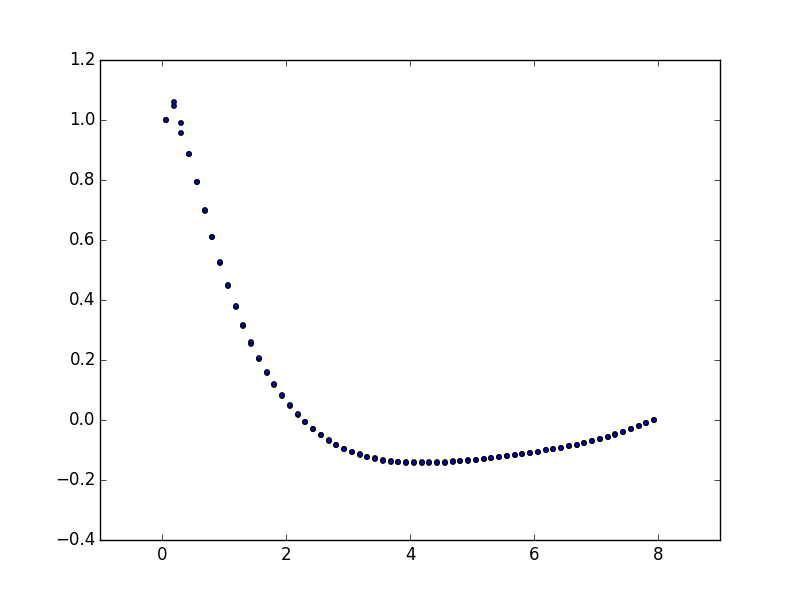}
\end{tabular}
\caption{\label{fig:SVM3_case4_G1}
Test case 4: 3D view of $H$ (top left), time evolution of the space-averaged energy (top right),
and 1D slice of the final-time velocity components $U^x,U^y$ (bottom left and right) along $x$ before and after the centerline $y=4$
(hence the two curves for $U^x$).
}
\end{figure}

\paragraph{Test case 4}
Last, we simulate a standard permanently-sheared flow on a long time range $t\in[0,10]$, 
expecting a steady viscous behaviour asymptotically.
Precisely, we consider a lid-driven cavity on the same 2D grid as in cases 1,2 and 3,
with a uniform initial condition % $H=1$, 
$\bF_h=\bI$ at rest, % $\bU=\bzero$
$G=1$, $g=10$ and $\lambda=.1$.
The following conditions are % imposed 
prescribed at boundaries\footnote{
 We have not precisely studied the 2D Initial-Boundary Value Problem and we impose steady values at boundary cells for the sake of simplicity.
 Note: boundary conditions were not meaningful in cases 1,2 and 3 so long as the front is far enough from the boundary.
}:
$$
\begin{cases}
(H,F^x_a,F^ya,F^x_b,F^y_b,U^x,U^y) = (1,1,1,0,1,0,1) & x=0  
\\
(H,F^x_a,F^ya,F^x_b,F^y_b,U^x,U^y) = (1,1,0,0,1,0,0) & x=8=y\,,\ y=0  
\end{cases}
$$
with $\bA$ at equilibrium (so relaxation source term is zero).

We recall the physical interpretation for the coefficients $F^i_\alpha$.
At time $t$, a basis of ``material vectors'' $\be_a,\be_b$ in the reference configuration (Lagrangian description) has been stretched and turned % respectively as
into two ``geometrical vectors'' $\bF_h\be_a,\bF_h\be_b$ resp. of the current configuration (Eulerian description).
So our condition on the ``left'' boundary (the cavity lid) actually means that $\be_a$ (supposedly aligned with $\be_x$ in the reference configuration)
is permanently % slightly 
stretched and % non-trivially 
turned by shearing. 

Clearly, standard inviscid Saint-Venant equations would not sustain a developped vortex in such conditions,
while the standard viscous Saint-Venant would immediately develop a developped vortex in the whole cavity after start.
On the contrary, our model can capture the transient development of a vortex until reaching a stationary state, see Fig.~\ref{fig:SVM3_case4_G1_3D}.

With a time-step that remains approximately constant equal to $.007$ (under our $1/2$ CFL condition, recall section~\ref{sec:general}),
our FV discretization quickly reaches a nearly stationary state with $H$ (thus, pressure) almost uniform in space except at the boundary, 
close to which one can observe the stagnation point of a (nearly stationary) vortex, see Fig.~\ref{fig:SVM3_case4_G1}.

Last, note that in each case we have run our simulations on finer and finer grids: they seem to converge globally in space,
despite symmetry losses.
However, we are aware that multi-dimensional conservation laws can admit many entropy weak solutions,
and a precise study remains to be done.

\bigskip

Finally, let us summarize the main features % shown here 
of our new \emph{symmetric hyperbolic} system of conservation laws to model viscoelastic flows of (Upper-Convected) Maxwell fluids
with shear-waves propagating at finite-speed.

For the 2D gravity flows with a free surface detailed herein, 
the resulting generalization of Saint-Venant system has not only a theoretically interesting formulation.
But moreover, physically-reasonable numerical simulations have shown that our model indeed satisfies our initial goal.
This is encouraging from the conceptual viewpoint of physical principles
as well as for practical applications e.g. to transient geophysical flows.

% The new system consists in the Eulerian description of a continuum with a new variable $\bA$, some kind of material "order parameter'' which has the meaning of the body elasticity.
% %
% At zero temprature, when the latter new variable is fixed, the new system simply reduces to the elastodynamics of close-to-neo-Hookean hyperelastic materials 
% (incompressible, with a free-surface in the 2D shallow-water regime) and to the standard Saint-Venant equations % for (non-elastic) shallow-water flows 
% when the latter elasticity vanishes.
% %
% At non-zero temperature, a source term arises in the system of conservation laws,
% which can be interpreted from a micro-macro viewpoint with an (overedamped) Langevin process for the new variable $\bA$, before a mean-field approximation.
% %
% Then, $\bA$ relaxes to an equilibrium at a rate $\lambda$ 
% that is compatible with Newtonian vicous effects in the fast-relaxation asymptotics $\lambda\to0$,
% and naturally possesses a Helmholtz free-energy -- not % globally 
% convex in the macroscopic conserved variables at zero temprature, however.

However, to fully validate our new model and make it as useful as possible, a % large
number of points need to be tackled.

\begin{itemize}
 \item Although it is satisfying that real (viscous) fluids seem well-modeled by symmetric-hyperbolic conservation laws,
 % compatible with the physical principles of ``rational thermomechanics''
 it is also well-known that the mathematical theory is far from complete at present.
 There remain challenging issues to precisely define physically-meaningful solutions at large times,
 and next simulate them numerically.
 
 Our FV approach certainly needs improving to that aim.

% The system has an interesting generic mathematical structure -- it may help analysis and numerical simulations ??
% At present, the system remains quite difficult to discretize.
 
 \item The large-time asymptotic stability of sheared-flows needs to be investigated % precisely 
 theoretically and numerically, as boundary-value problems.
 
 \item In many practical applications, one would also want to use our new model with 3D flows ;
 adequate % specific 
 choices of pressures should therefore be more carefully studied.
 Moreover, specific applications would also require one to modify the deviatoric stresses.
 It remains to investigate the various non-Newtonian possibilities in detail,
 with care as regards ``non-isothermal'' flows 
(recall our microscopic interpretation of the new state variable $\bA$ as a mean-field approximation of material distortion thermally-agitated).
\end{itemize}

\appendix

\section{
Riemann solvers based on flux-splitting % equivalent to an entropy-consistent Riemann solver 
}
\label{app:riemann}

The entropy-consistency condition \eqref{solversufficientcondition} % can be well characterized for flux vector splitting : sufficient condition are almost necessary !! so it
can be achieved with simple 1D Riemann solvers which yield \emph{flux-splitting} FV schemes % of Godunov-type (as above) => half-CFL required !
with a kinetic interpretation % (equals the transport-projection method which relies on BGK approximation) 
\cite{bouchut-2003}.
Here, we % are looking by a relaxation approach built on a kinetic interpretation
consider 1D Riemann solutions of the Lagrangian system \eqref{eq:SVUCM0detaillag}:
\beq
\label{eq:SVUCM0detaillag1D}
\begin{aligned}
& \partial_t H^{-1} - \partial_a( \Vcal_{a} \equiv U^j \sigma_{jk} F^k_b ) = 0 % not conservative ! \partial_t H^{-1} -  \sigma_{ij} F^i_b \partial_a( U^j ) = 0
\\
& \partial_t F^i_a - \partial_a U^i = 0
\\
& \partial_t F^i_b = 0
\\
& \partial_t U^i + \partial_a \left( \Pcal^i_a \equiv (gH^2/2 + GH^3 A_{cc}) \sigma_{ij} F^j_b - G F^i_\alpha A_{\alpha a} \right) = 0 % -K U^i
% \\
% & \partial_t A_{\alpha\beta} = 0 % ( |\bF_h|^{-2}\sigma_{\alpha\alpha'}\sigma_{\beta\beta'} F^k_{\alpha'} F^k_{\beta'} -A_{\alpha\beta})/\lambda
\\ 
& \partial_t A_{cc}^{1/4} = 0 % (H^{-2}-A_{cc})/\lambda
\end{aligned}
\eeq
that are consistent in the sense of \eqref{solversufficientcondition} with the following entropy inequality:
\beq
\label{eq:SVHElag1D}
\partial_t E + \partial_a\left(  \Pcal^i_a U^i \equiv U^i (gH^2/2 + GH^3 A_{cc}) \sigma_{ij} F^j_b - G U^i F^i_\alpha A_{\alpha a} \right) \le 0 % -K|\bU|^2 -D
\eeq
when the flux-coefficients $A_{\alpha a}$ are given % smooth
so the \emph{free-energy} functional:
\begin{equation}
\label{eq:internalenergylag1D} 
E = \frac12|\bU|^2 + \frac{g}2 H + \frac{G}2 \left( F^i_\alpha A_{\alpha\beta} F^i_\beta % - \ln\det\bA_h 
+ H^2 A_{cc} - \ln A_{cc} \right) \,,
\end{equation}
is actually a (strictly convex) mathematical entropy for % admissible solutions of
\eqref{eq:SVUCM0detaillag1D}.

\subsection{Entropy-consistent solver in Lagrangian coordinates}
\label{app:lagrange}

The \emph{entropy-consistent} % approximate 
simple 1D Riemann solvers, satisfying \eqref{solversufficientcondition} with $\bn=\be_a$,
can be characterized % when they correspond to % are equivalent to flux-splitting schemes
in the flux-splitting case % recall 
% a necessary condition for entropy-consistency
% from 
\cite[p.643]{bouchut-2003}. 
% not sufficient
Recall: % the fact that 
$E$ is a strictly convex entropy
for a system $\partial_t u + \partial_a F(u) = 0$ like~\eqref{eq:SVUCM0detaillag1D},
is equivalent to, % the fact that 
% the change from $u$ to 
the entropy variable $v \equiv \partial_u E$ % so $\partial_u v = \partial_{uu}^2 E$ is _the one_ that
symmetrizes the system % godlewski raviart 1996 th 3.2 p. 25
and $F = \partial_v \psi$. Then \cite{bouchut-2003}:
\begin{lemma}
The FV schemes built with flux-splitting $F=F^++F^-$ % FV schemes 
for $\partial_t u + \partial_a F(u) = 0$ is entropy-consistent if
$F^+=\partial_v \psi^+$, $F^-=\partial_v \psi^-$ % recall that $F=\partial_v \psi$ is a gradient
are built from two scalar functions $\psi^+,-\psi^-$ convex in $v$. % as long as Piola identity holds
\end{lemma}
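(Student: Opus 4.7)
The plan is to perform the standard Tadmor/Bouchut-style derivation of a discrete entropy inequality for flux-splitting schemes, then match the result with the integral condition \eqref{solversufficientcondition} required by Prop.~\ref{prop:decay}. First I would exploit the Legendre duality induced by the strictly convex entropy $E$: since $v = \partial_u E$ is a diffeomorphism, the entropy potential is defined by $\psi(v) = v\cdot u(v) - E(u(v))$ and satisfies $F = \partial_v \psi$, with the physical entropy flux recovered as $G = v\cdot F - \psi$. Given the decomposition $F^\pm = \partial_v \psi^\pm$, I would introduce the partial entropy fluxes $G^\pm(u) := v(u)\cdot F^\pm(u) - \psi^\pm(v(u))$ and take as candidate numerical entropy flux
\[
\mathcal{G}(u_l, u_r) := G^+(u_l) + G^-(u_r),
\]
which is obviously consistent with $G = G^+ + G^-$ when $u_l = u_r$.

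Next I would establish the cell-level discrete entropy inequality for the update $u_i^{n+1} = u_i^n - \lambda[F^+(u_i^n) + F^-(u_{i+1}^n) - F^+(u_{i-1}^n) - F^-(u_i^n)]$ under a suitable CFL restriction. The key ingredients are the tangent-plane inequalities coming from convexity of $\psi^+$ and concavity of $\psi^-$, namely
\[
\psi^+(v_2) - \psi^+(v_1) \ge F^+(v_1)\cdot(v_2-v_1), \qquad \psi^-(v_2) - \psi^-(v_1) \le F^-(v_1)\cdot(v_2-v_1).
\]
Using convexity of $E$ to bound $E(u_i^{n+1}) - E(u_i^n)$ by $v_i^{n+1}\cdot(u_i^{n+1} - u_i^n)$ and applying these tangent estimates at the entropy states attached to the neighbouring cells, a telescoping argument in $i$ yields
\[
E(u_i^{n+1}) - E(u_i^n) + \lambda\bigl[\mathcal{G}(u_i^n, u_{i+1}^n) - \mathcal{G}(u_{i-1}^n, u_i^n)\bigr] \le 0.
\]

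Finally I would translate this cell-level inequality into the integral form \eqref{solversufficientcondition}. The flux splitting corresponds to a simple Riemann solver with wavespeeds $\pm s$ large enough to bound $\|\partial_u F^\pm\|$ and an intermediate state such that the flux across $\xi = 0$ is $F^+(u_l) + F^-(u_r)$; the integrals of $E \circ R$ on $(-\infty,0)$ and $(0,+\infty)$ then reduce, via the same convexity/concavity inequalities, to exactly the two-sided bounds that $\tilde G_{\bn}(u_l, u_r) = \mathcal{G}(u_l, u_r)$ must satisfy in \eqref{solversufficientcondition}. The main obstacle will be the second step: selecting the entropy states at which to apply the tangent-plane inequalities so that the signs cooperate after telescoping, and identifying a sharp CFL constant compatible with the convex-combination bound of Prop.~\ref{prop:convex} and with the opposite senses of convexity of $\psi^+$ and concavity of $\psi^-$.
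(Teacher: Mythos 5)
Your proposal is correct in substance and follows essentially the same route as the source the paper relies on: for this lemma the paper gives no self-contained proof at all, but simply recalls the result from \cite[p.643]{bouchut-2003}, and the mechanism you identify is exactly the one the paper later verifies for its particular quadratic splitting. Indeed, your two tangent-plane inequalities (convexity of $\psi^+$ and of $-\psi^-$ in $v$) are, after substituting $G^\pm=v\cdot F^\pm-\psi^\pm$, precisely the paper's dissipation condition \eqref{eq1:dissip} for $\epsilon=\pm$, and your candidate numerical entropy flux $G^+(u_l)+G^-(u_r)$ is the one used there. Two remarks on economy and on where the remaining work actually sits. First, your second step (the fully discrete telescoped cell inequality under CFL) is redundant in the paper's architecture: ``entropy-consistent'' is defined at the level of the Riemann solver via \eqref{solversufficientcondition}, and the passage to the cell entropy inequality under CFL is already packaged once and for all in Prop.~\ref{prop:decay} by the convex-combination/Jensen argument, so you need not re-derive it; this also dissolves the ``which state to linearize at'' worry you flag, since the only linearization needed is the one in \eqref{eq1:dissip}. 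Second, the step you compress at the end --- identifying the simple Riemann solver whose flux across $\xi=0$ is $F^+(u_l)+F^-(u_r)$ and checking that the integrals of $E\circ R$ reduce to the two-sided bounds --- is where the genuine content lies; the paper does this not by exhibiting wavespeeds $\pm s$ directly but through the kinetic/BGK relaxation interpretation \eqref{eq:SVUCM0detaillag1Drelaxed}--\eqref{eq:SVUCM0detaillag1Drelaxeddiag}, whose fields are all linearly degenerate, and the admissibility of the intermediate states then imposes the subcharacteristic-type condition \eqref{whitham3} on the relaxation speed $c$. So your outline is the right one, but to turn it into a complete proof you should either carry out that kinetic identification or cite it, rather than asserting that the integral condition ``reduces'' to the flux-level bounds.
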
\noindent
Moreover, the entropy-consistency \eqref{solversufficientcondition} % of the approximate Riemann solutions % from a flux vector splitting $F=F^++F^-$ 
can be fully analyzed % proved !!
after kinetic interpretation % of the FV scheme of Godunov type
% as a transport-projection method using a BGK model
like in \cite{bouchut-2003}. % with Maxwellians $F^+/c, -F^-/c, I-F^+/c+F^-/c$ when $c \le dx/dt$
Then, considering % here 
$A_{cc}$ as flux-coefficient like $A_{\alpha\beta}$ % for the present stage i.e. 
for % FV approximations in Lagrangian coordinates of the subsystem 
$u=(H^{-1},F^x_a,F^y_a,U^x,U^y)$ solution to \eqref{eq:SVUCM0detaillag1D}
% since A_{cc} flux is zero and wave with zero speed are exactly resolved
% that is, we can then omit A_{cc} from the dependent variables like  A_{\alpha\beta}
% in a simple % entropy-consistent 1D 
% Riemann solver with flux-splitting 
here, let us split $F=\partial_v\psi$ where 
$$
v = \left( -\Pcal,\Gcal^x_a,\Gcal^y_a,U^x,U^y\right)
\quad
\psi = \Pcal\Vcal_a - \Gcal^x_a U^x - \Gcal^y_a U^y % =  (\Pcal\sigma_{ij}F^j_b - \Gcal^i_a)U^i 
\equiv \Pcal^i_aU^i \,.
$$

Let us choose the following "convex" splitting:
$$
\psi^\epsilon = \frac{\epsilon}{4c} \sum_{i\in\{x,y\}} \left( \Pcal^i_a + c\epsilon U^i \right)^2 % \left( \Pcal\sigma_{ij}F^j_b - \Gcal^i_a + c\epsilon U^i \right)^2
\quad \epsilon\in\{+,-\}
$$
$$%\begin{multline}
F^\epsilon
%= \frac\epsilon{2c} \left( \Pcal^x_a + c\epsilon U^x \right) %\left( \Pcal F^y_b - \Gcal^x_a + c\epsilon U^x \right)
% \begin{pmatrix}
% -F^y_b \\
% -1 \\
% 0 \\
% c \epsilon \\
% 0
% \end{pmatrix}
% +
% \frac\epsilon{2c} \left( \Pcal^y_a + c\epsilon U^y \right) %\left( -\Pcal F^x_b - \Gcal^y_a + c\epsilon U^y \right)
% \begin{pmatrix}
% F^x_b \\
% 0 \\
% -1 \\
% 0 \\
% c \epsilon
% \end{pmatrix}
% \\
% = \frac\epsilon{2c} 
% \begin{pmatrix}
% - \Pcal|F_b|^2 + \Gcal^i_a\sigma_{ij}F^j_b % = G A_{aa}(F^i_a\sigma_{ij}F^j_b= H^{-1}) % car G A_{ab}(F^x_bF^y_b-F^y_bF^x_b) = 0
% + c\epsilon\Vcal_a \\
% -(\Pcal F^y_b - \Gcal^x_a + c\epsilon U^x) \\
% -(-\Pcal F^x_b - \Gcal^y_a + c\epsilon U^y) \\
% c\epsilon ( \Pcal F^y_b - \Gcal^x_a + c\epsilon U^x ) \\
% c\epsilon (-\Pcal F^x_b - \Gcal^y_a + c\epsilon U^y )
% \end{pmatrix}
= \frac\epsilon{2c} 
\begin{pmatrix}
-(\Pcal^x_a + c\epsilon U^x)F^y_b +(\Pcal^y_a + c\epsilon U^y)F^x_b \\
-(\Pcal^x_a + c\epsilon U^x) \\
-(\Pcal^y_a + c\epsilon U^y) \\
c\epsilon ( \Pcal^x_a + c\epsilon U^x ) \\
c\epsilon (-\Pcal^y_a + c\epsilon U^y )
\end{pmatrix}
$$%\end{multline}
with a single parameter $c>0$ (to be fixed later, e.g. for entropy-consistency).
One can check that the entropy fluxes $G^\epsilon=v\cdot F^\epsilon - \psi^\epsilon$ associated with $F^\pm$, % defined 
such that $\partial_u G^\epsilon = \partial_u E\cdot\partial_u F^\epsilon$,
recall $\partial_u\psi^\epsilon=(\partial_{uu}^2E)\partial_v\psi^\epsilon$, % actually 
do ``dissipate" % contribute to a discrete version of the inequality \eqref{eq:SVHElag1D}, i.e. 
% in the sense of
\cite{bouchut-2003}, i.e.
\beq
\label{eq1:dissip}
\epsilon \left( G^\epsilon(u_1) - G^\epsilon(u_2) -  (\partial_u E)|_{u_1} ( F^\epsilon(u_1) - F^\epsilon(u_2) ) \right) \le 0 \,.
\eeq
holds true for all % admissible 
$u_1,u_2$.
(Note indeed that \eqref{eq1:dissip} is equivalent to the convexity of $\epsilon\psi^\epsilon$, % by definition
it can be checked by a direct computation here on noting $G^\epsilon = \psi^\epsilon$ % quadratic in v case always true ?
and $\epsilon G^\epsilon = \epsilon \psi^\epsilon$ is a convex function of $v=\partial_u E$ such that $F^\epsilon=\partial_v \psi^\epsilon$).
% thus a discrete entropy flux can be associated with them too
%
Then, following \cite{bouchut-2003}, to fully check the entropy-consistency condition \eqref{solversufficientcondition} for % the FV scheme with 
the flux-splitting above as numerical flux
-- indeed a simple Riemann solver --, one can use its kinetic interpretation % of the approximate Riemann solution given by the flux-splitting above
as a \emph{discretized} BGK model
%\beq
%\begin{aligned}
%& \partial_t f^+ + c \partial_a f^+ = (F^+(u)/c-f^+)/\varepsilon
%\\
%& \partial_t f^- - c \partial_a f^- =(-F^-(u)/c-f^-)/\varepsilon
%\\
%& \partial_t f^0 =(u-F^+(u)/c+F^-(u)/c-f^-)/\varepsilon
%\end{aligned}
%\eeq
with relaxation form\footnote{Where $\varepsilon$ % in the right-hand side 
assumes its usual meaning for relaxation systems \cite{bouchut-2003}: some time $\varepsilon\to0$ characteristic of the (numerical) relaxation
and infinitesimal in comparison with the time-step of the numerical scheme. It should not be mixed with $\epsilon\in\{+,-\}$.}:
\beq
\label{eq:SVUCM0detaillag1Drelaxed}
\begin{aligned}
& \partial_t H^{-1} - \partial_a \Vcal_a = 0
\\
& \partial_t \Vcal_a + \partial_a \Zcal_{aa} = %\frac
({U^i \sigma_{ij} F^j_b - \Vcal_a})/\varepsilon 
\\
& \partial_t \Zcal_{aa} + c^2 \partial_a \Vcal_a = %\frac
% ({- \Pcal|F_b|^2 + \Gcal^i_a\sigma_{ij}F^j_b - \Zcal_{aa}})/\varepsilon 
(\Pcal^i_a {\sigma_{ij}F^j_b - \Zcal_{aa}})/\varepsilon 
\\
& \partial_t F^i_b = 0
\\
& \partial_t F^i_a - \partial_a U^i = 0
\\
& \partial_t U^i + \partial_a \Pi^i_a % \left( (gH^2/2 + GH^3 A_{cc}) \sigma_{ij} F^j_b - G F^i_\alpha A_{\alpha a} \right) 
= 0 % -K U^i
%\\
%& \partial_t \Vcal_a - (\sigma_{ij} F^j_b) \partial_a U^i = \frac{- U^j \sigma_{jk} F^k_b - \Vcal_a}\varepsilon
\\
& \partial_t \Pi^i_a + c^2 \partial_a U^i = %\frac
% ({ \Pcal \sigma_{ij} F^j_b - G F^i_\alpha A_{\alpha a} - \Pi^i_a })/\varepsilon
({ \Pcal^i_a - \Pi^i_a })/\varepsilon
% \\
% & \partial_t A_{\alpha\beta} = 0 % ( |\bF_h|^{-2}\sigma_{\alpha\alpha'}\sigma_{\beta\beta'} F^k_{\alpha'} F^k_{\beta'} -A_{\alpha\beta})/\lambda
% \\ 
% & \partial_t A_{cc} = 0 % (H^{-2}-A_{cc})/\lambda
\end{aligned}
\eeq
also endowed with a diagonal $8\times8$ formulation: 
% \cite[formula (4.3)]{bouchut-2003} % projection method based on the BGK model
\beq
\label{eq:SVUCM0detaillag1Drelaxeddiag}
\begin{aligned}
& \partial_t \left( H^{-1} + \Zcal_{aa}/c^2 \right) 
= % \frac{\left(- \Pcal|F_b|^2 + \Gcal^i_a\sigma_{ij}F^j_b\right)/c - ( H^{-1} + \Zcal_{aa}/c^2)}\varepsilon 
\frac{ H^{-1}  + \sigma_{ij}F^j_b\Pcal^i_a/c^2 - ( H^{-1} + \Zcal_{aa}/c^2)}\varepsilon
\\
& \partial_t \left( \Vcal_a \pm \Zcal_{aa}/c \right) \pm c\partial_a \left( \Vcal_a \pm \Zcal_{aa}/c \right) 
= % \frac{\left( U^i \sigma_{ij} F^j_b \pm (- \Pcal|F_b|^2 + \Gcal^i_a\sigma_{ij}F^j_b)/c \right) - ( \Vcal_a \pm \Zcal_{aa}/c )}\varepsilon 
\frac{ (U^i \pm \Pcal^i_a/c)\sigma_{ij}F^j_b - ( \Vcal_a \pm \Zcal_{aa}/c )}\varepsilon 
\\
& \partial_t \sigma_{jk} F^k_b = 0
\\
& \partial_t \left( \Pi^i_a/c^2 + F^i_a \right) 
= %\frac{ (\Pcal \sigma_{ij} F^j_b - G F^i_\alpha A_{\alpha a})/c^2+F^i_a - \left( \Pi^i_a/c^2 + F^i_a \right) }\varepsilon
\frac{ \Pcal^i_a/c^2+F^i_a - \left( \Pi^i_a/c^2 + F^i_a \right) }\varepsilon
\\
& \partial_t \left( \Pi^i_a/c \pm U^i \right) \pm c \partial_a \left( \Pi^i_a/c \pm U^i \right) 
= % \frac{ (\Pcal \sigma_{ij} F^j_b - G F^i_\alpha A_{\alpha a})/c\pm U^i - \left( \Pi^i_a/c \pm U^i \right) }\varepsilon
\frac{ \Pcal^i_a/c\pm U^i - \left( \Pi^i_a/c \pm U^i \right) }\varepsilon
% \\
% & \partial_t A_{\alpha\beta} = 0 % ( |\bF_h|^{-2}\sigma_{\alpha\alpha'}\sigma_{\beta\beta'} F^k_{\alpha'} F^k_{\beta'} -A_{\alpha\beta})/\lambda
% \\ 
% & \partial_t A_{cc} = 0 % (H^{-2}-A_{cc})/\lambda
\end{aligned}
\eeq
that clearly shows the system \eqref{eq:SVUCM0detaillag1Drelaxed} has only linearly degenerate fields.

In particular, one can add % $A_{cc}$ as a variable solution to 
$\partial_t A_{cc}=0$ without changing the structure of the Lagrangian system \eqref{eq:SVUCM0detaillag1Drelaxed},
as well as
$$
\partial_t ( |\bU|^2/2 + \hat e ) - \partial_a (\Pi^i_aU^i) = \left( E - (|\bU|^2/2 + \hat e)\right)/\varepsilon
$$
to check % study  analyze condition 
\eqref{solversufficientcondition} % for entropy-consistency of the approximate Riemann solution
with $\tilde G_{\be_b}(u_l,u_r)=(\Pi^i_aU^i)|_{a/t=0}$ as in \cite{bouchut-2003}.
It amounts to add
\begin{multline}
\label{eq:extension}
\partial_t ( \hat e -  |\Pi^x_a|^2/2c^2 -  |\Pi^y_a|^2/2c^2 ) \\
= \frac{(E - |\bU|^2/2 -|\Pcal_a^x|^2/2c^2 - |\Pcal_a^y|^2/2c^2 ) - ( \hat e -  |\Pi^x_a|^2/2c^2 -  |\Pi^y_a|^2/2c^2 )}\varepsilon
\end{multline}
in % the diagonal reformulation 
\eqref{eq:SVUCM0detaillag1Drelaxeddiag}. Then, note that for \eqref{solversufficientcondition} to hold, it is sufficient that
$$
E(u) - |\bU|^2/2 -|\Pcal_a^x|^2/2c^2 - |\Pcal_a^y|^2/2c^2 \le \hat e -  |\Pi^x_a|^2/2c^2 -  |\Pi^y_a|^2/2c^2
$$
holds for all % admissible 
states $u$ in Riemann solutions of the extended % relaxation 
system \eqref{eq:SVUCM0detaillag1Drelaxeddiag}--\eqref{eq:extension} i.e.
\beq
\label{eq:intermediateinequality}
E(u_1)-\frac{G^+(u_1) - G^-(u_1)}c \le E(u_2) + \frac{G^+(u_2) - G^-(u_2)}c 
\eeq
for all \emph{intermediate} state $u_1$ in the Riemann solution % of the extended system 
such that % by \eqref{eq:extension}, one can associate it to the value 
$(\hat e -  |\Pi^x_a|^2/2c^2 -  |\Pi^y_a|^2/2c^2) \equiv E(u_2) - \frac{G^+(u_2) - G^-(u_2)}c$
% for the new component after extension 
with $u_2$ the "outward" neighbour state (chosen in the direction opposite to the interface,
which is directly fixed by the boundary condition here in the 3-wave case).

Now, the split-form of % the diagonal formulation of
the extended system \eqref{eq:SVUCM0detaillag1Drelaxeddiag}--\eqref{eq:extension} allows one to check \eqref{eq:intermediateinequality} 
independently for left and right (intermediate) states $u_1$
(with $u_2$ % being respectively
resp. the left and right boundary condition here),
when $A_{cc}$ is a variable solution to a contact disconitnuity wave,
and when the \emph{coefficient} $A_{\alpha\beta}$ is discontinuous in
\begin{multline*}
E - \frac{G^+ - G^-}c  
% =  
% \frac12|\bU|^2 + \frac{g}2 H +  \frac{G}2 H^2 A_{cc} + \frac{G}2 \left( F^i_\alpha A_{\alpha\beta} F^i_\beta  \right) - \ln(A_{cc}\det\bA_h)
% \\
% - \frac1{2c^2} \sum_{i\in\{x,y\}} \left( (\Pcal_a\sigma_{ij}F^j_b - \Gcal^i_a)^2 + c^2|U^i|^2 \right)
% \\
=  \frac{g}2 H +  \frac{G}2 \left( H^2 A_{cc} + F^i_\alpha A_{\alpha\beta} F^i_\beta  \right) - \ln A_{cc} % -\ln\det\bA_h
- \frac1{2c^2} \left( |\Pcal_a^x|^2 + |\Pcal_a^y|^2  \right) % - \frac1{2c^2} \sum_{i\in\{x,y\}} \left( \Pcal_a^2|F_b|^2 + G^2|\Fcal^i_a|^2 - 2G\Pcal_a\Fcal^i_a\sigma_{ij}F^j_b \right)  
\,.
\end{multline*}
% where $\Fcal^i_a\sigma_{ij}F^j_b=A_{aa}(F^i_a\sigma_{ij}F^j_b)\approx A_{aa}H^{-1}$
%
Recalling \eqref{eq1:dissip} and $c(u_2-u_1)+F(u_2)-F(u_1)=0$ % by conservativity 
then \eqref{eq:intermediateinequality} rewrites
\begin{multline}
\label{eq:0wavedissipation}
E(u_1)-\frac{G^+(u_1) - G^-(u_1)}c - E(u_2) + \frac{G^+(u_2) - G^-(u_2)}c 
\\
- (\partial_u E)|_{u_1} \left( u_1-\frac{F^+(u_1) - F^-(u_1)}c - u_2 + \frac{F^+(u_2) - F^-(u_2)}c \right)
\le 0
\end{multline}
% where we recall
$$
F^+ - F^-  
= \frac1{c} 
\begin{pmatrix}
- \Pcal_a^i \sigma_{ij} F^j_b % - \Pcal_a^x F^y_b + \Pcal_a^y F^x_b 
\\
- \Pcal_a^x \\
- \Pcal_a^y \\
c^2 U^x \\
c^2 U^y 
\end{pmatrix}
\quad
u - \frac{F^+ - F^-}c  
= 
\begin{pmatrix}
H^{-1} + \Pcal_a^i \sigma_{ij} F^j_b/c^2	 \\
F^x_a + \Pcal_a^x/c^2 \\
F^y_a + \Pcal_a^y/c^2 \\
0 \\
0 
\end{pmatrix}
$$
% \beq%\begin{multline}
% G^+-G^- 
% % = \frac1{2c} \sum_{i\in\{x,y\}} \left( (\Pcal_a\sigma_{ij}F^j_b - \Gcal^i_a)^2 + c^2|U^i|^2 \right)
% = \frac1{2c} \left( |\Pcal_a^x|^2 + |\Pcal_a^y|^2 + c^2|U^x|^2 + c^2|U^y|^2 \right)
% \eeq%\end{multline}
for boundary states $u_2=u_{l/r}$ resp. with neigbour intermediate state $u_1=u^*_{l/r}$
% explicitly computed using the following conservation laws
% \beq
% \begin{aligned}
% \left( H^{-1} + \Zcal_{aa}/c^2 \right)_l^* = \left( H^{-1} + \Zcal_{aa}/c^2 \right)_l \quad & \quad \left( H^{-1} + \Zcal_{aa}/c^2 \right)_r^* = \left( H^{-1} + \Zcal_{aa}/c^2 \right)_r
% \\
% \left( \Zcal_{aa}/c + \Vcal_a \right)_l = \left( \Zcal_{aa}/c + \Vcal_a \right)_l^* \quad & \quad \left( \Zcal_{aa}/c - \Vcal_a \right)_r = \left( \Zcal_{aa}/c - \Vcal_a \right)_r^* \\
% % \left( \Pi^i_a/c_{aa} + U^i \right)_l & = \left( \Pi^i_a/c_{aa} + U^i \right)_l^* \\
% % \left( \Pi^i_a/c_{aa} - U^i \right)_r & = \left( \Pi^i_a/c_{aa} - U^i \right)_r^* \\
% \left( \Pi^i_a/c + U^i \right)_l = \left( \Pi^i_a/c + U^i \right)_l^* \quad & \quad \left( \Pi^i_a/c - U^i \right)_r = \left( \Pi^i_a/c - U^i \right)_r^* \\
% %\left( \Pi^i_a/c^2_{aa} + F^i_a \right)_{l,r}  & = \left( \Pi^i_a/c^2_{aa} + F^i_a \right)_{l,r}^* \\
% \left( \Pi^i_a/c^2 + F^i_a \right)_{l,r}  & = \left( \Pi^i_a/c^2 + F^i_a \right)_{l,r}^* \\
% (A_{cc})_{l,r} & = (A_{cc})_{l,r}^* \\
% (F^j_b)_{l,r} & = (F^j_b)_{l,r}^* \\
% % \left( \Pi^i_b/c_{ba} + F^i_a \right)_{l,r} & = \left( \Pi^i_b/c_{ba} + F^i_a \right)_{l,r}^* \\
% % \left( \Pi^i_b/c_{ba} + F^i_a \right)_{l,r} & = \left( \Pi^i_b/c_{ba} + F^i_a \right)_{l,r}^* \\
% %(A_{\alpha\beta})_{l,r} & = (A_{\alpha\beta})_{l,r}^* \\
% \end{aligned}
% \eeq
% (with the same standard notations for the Riemann solution as in Section~\ref{sec:oned}).
\beq
\label{eq:1paramsol}
\begin{aligned}
& H_l^* :=  \left( H_l^{-1} - \frac{\Vcal_l-\Zcal_l/c-\Vcal_r+\Zcal_r/c}{2c} \right)^{-1}
\\
& H_r^* :=  \left( H_r^{-1} - \frac{\Vcal_l+\Zcal_l/c-\Vcal_r-\Zcal_r/c}{2c} \right)^{-1}
\\
& ( \Vcal_a )_l^* = ( \Vcal_a )_r^* = \Vcal_a^* :=  \frac{\Vcal_l+\Zcal_l/c+\Vcal_r-\Zcal_r/c}{2} 
\\
& ( \Zcal_{aa} )_l^* = ( \Zcal_{aa} )_r^* = \Zcal_{aa}^* := \frac{c\Vcal_l+\Zcal_l-c\Vcal_r-\Zcal_r}{2} 
\\
& ( U^i )_l^* = ( U^i )_r^* = (U^i)^* :=  % ( U^i )_l^* & = ( U^i )_r^* 
\frac{(U^i)_l+( \Pi^i_a )_l/c+(U^i)_r-( \Pi^i_a )_r/c}{2}
\\
& ( \Pi^i_a )_l^* = ( \Pi^i_a )_r^* = (\Pi^i_a)^* := % ( \Pi^i_a )_l^* = ( \Pi^i_a )_r^* 
\frac{c(U^i)_l+( \Pi^i_a )_l-c(U^i)_r+( \Pi^i_a )_r}{2}
% & ( U^i )_l^* = ( U^i )_r^* = (U^i)^* :=  % ( U^i )_l^* & = ( U^i )_r^* 
% \frac{(U^i)_l+( \Pi^i_a )_l/c_{aa}+(U^i)_r-( \Pi^i_a )_r/c_{aa}}{2}
% \\
% & ( \Pi^i_a )_l^* = ( \Pi^i_a )_r^* = (\Pi^i_a)^* := % ( \Pi^i_a )_l^* = ( \Pi^i_a )_r^* 
% \frac{c_{aa}(U^i)_l+( \Pi^i_a )_l-c_{aa}(U^i)_r-( \Pi^i_a )_r}{2}
\\
% & % (\Pi^i_a/c_{aa})_r^* = (\Pi^i_a/c_{aa}-U^i)_r + (U^i)^*_r
% (\Pi^i_a)_r^* := % (\Pi^i_a-c_{aa} U^i)_r + \frac{c_{aa}(U^i)_l+(\Pi^i_a)_l+c_{aa}(U^i)_r-(\Pi^i_a)_r}{2}
% \frac{c_{aa}(U^i)_l+(\Pi^i_a)_l-c_{aa}(U^i)_r+(\Pi^i_a)_r}{2}
% \\
% & % (\Pi^i_a/c_{aa})_l^* = (\Pi^i_a/c_{aa}+U^i)_l - (U_i)_l^*
% (\Pi^i_a)_l^* := % (\Pi^i_a+c_{aa} U^i)_l - \frac{c_{aa}(U^i)_l+(\Pi^i_a)_l+c_{aa}(U^i)_r-(\Pi^i_a)_r}{2}
% \frac{c_{aa}(U^i)_l+(\Pi^i_a)_l-c_{aa}(U^i)_r+(\Pi^i_a)_r}{2}
\\
% % (F^i_a)_{l,r}^* = ( \Pi^i_b/c_{ba} + F^i_a)_{l,r} - (\Pi^i_b/c_{ba})_{l,r}^* <= \Pi^i_b
% & (F^i_a)_l^* % = (\Pi^i_a/c_{aa} + F^i_a)_l - (\Pi^i_a/c_{aa})_l^*
% :=  % (\Pi^i_a/c_{aa} + F^i_a)_l - \frac{(U^i)_l+(\Pi^i_a)_l/c_{aa}-(U^i)_r+(\Pi^i_a)_r/c_{aa}}{2}
% (F^i_a)_l - \frac{(U^i)_l-(\Pi^i_a)_l/c_{aa}-(U^i)_r+(\Pi^i_a)_r/c_{aa}}{2}
% \\
% & (F^i_a)_r^* % = (\Pi^i_a/c_{aa} + F^i_a)_r - (\Pi^i_a/c_{aa})_r^*
% :=  % (\Pi^i_a/c_{aa} + F^i_a)_r - \frac{(U^i)_l+(\Pi^i_a)_l/c_{aa}-(U^i)_r+(\Pi^i_a)_r/c_{aa}}{2}
% (F^i_a)_r - \frac{(U^i)_l+(\Pi^i_a)_l/c_{aa}-(U^i)_r-(\Pi^i_a)_r/c_{aa}}{2}
& (F^i_a)_l^* % = (\Pi^i_a/c + F^i_a)_l - (\Pi^i_a/c)_l^*
:=  % (\Pi^i_a/c + F^i_a)_l - \frac{(U^i)_l+(\Pi^i_a)_l/c-(U^i)_r+(\Pi^i_a)_r/c}{2}
(F^i_a)_l - \frac{(U^i)_l-(\Pi^i_a)_l/c-(U^i)_r+(\Pi^i_a)_r/c}{2c}
\\
& (F^i_a)_r^* % = (\Pi^i_a/c + F^i_a)_r - (\Pi^i_a/c)_r^*
:=  % (\Pi^i_a/c + F^i_a)_r - \frac{(U^i)_l+(\Pi^i_a)_l/c-(U^i)_r+(\Pi^i_a)_r/c}{2}
(F^i_a)_r - \frac{(U^i)_l+(\Pi^i_a)_l/c-(U^i)_r-(\Pi^i_a)_r/c}{2c}
% \\
% & (A_{\alpha\beta})_{l,r} = (A_{\alpha\beta})_{l,r}^* 
% \\
% & (A_{cc})_{l,r} = (A_{cc})_{l,r}^* 
% \\
% & (F^j_b)_{l,r} = (F^j_b)_{l,r}^* 
\end{aligned}
\eeq
% (where $\Zcal_{l/r},\Vcal_{l/r}$ below in \eqref{eq:1paramsol} are respectively the left/right value of the initial condition for $\Zcal_{aa},\Vcal_a$)
which can be checked e.g. using
\begin{multline}
\partial_{H_2^{-1}} \left(
E(u_2) - \sum_{\epsilon} \frac{\psi^\epsilon(u_2)}{\epsilon c} - (\partial_u E)|_{u_1} \left( u_2 - \sum_{\epsilon} \frac{F^\epsilon(u_2)}{\epsilon c} \right)
\right)
\\
= (\Pcal_1-\Pcal_2)\left( 1+ \frac{(\sigma_{ij}F^j_b)^2}{c^2} \partial_{H^{-1}}\Pcal|_2 \right)
\\
- GA_{aa}((F^i_a)_1-(F^i_a)_2) \left( \frac{\sigma_{ij}F^j_b}{c^2} \partial_{H^{-1}}\Pcal|_2 \right) \,,
\end{multline}
\begin{multline}
\frac1{GA_{aa}} \partial_{(F^i_a)_2} \left(
E(u_2) - \sum_{\epsilon} \frac{\psi^\epsilon(u_2)}{\epsilon c} - (\partial_u E)|_{u_1} \left( u_2 - \sum_{\epsilon} \frac{F^\epsilon(u_2)}{\epsilon c} \right)
\right)
\\
= ((F^i_a)_1-(F^i_a)_2) \left( \frac{GA_{aa}}{c^2} - 1\right) + (\Pcal_1-\Pcal_2) \frac{(\sigma_{ij}F^j_b)}{c^2} \,.
\end{multline}
When $u_1=u_2$, \eqref{eq:0wavedissipation} is satisfied,
and for $u_2=(-\Pcal,GA_{aa} F^x_a,GA_{aa} F^y_a,)_2\approx u_1$ % in a neighborhood of $u_1$ \eqref{eq:0wavedissipation} % it reads quadratically
\begin{multline}
d \left(
E(u_2) - \sum_{\epsilon,\delta} \frac{\psi^\epsilon(u_2)}{\epsilon c} 
- (\partial_u E)|_{u_1} \left( u_2 - \sum_{\epsilon} \frac{F^\epsilon(u_2)}{\epsilon c} \right)
\right)
\\
= - (\partial_{H^{-1}}\Pcal|_2) (\Pcal_1-\Pcal_2)^2 \left( (\partial_{H^{-1}}\Pcal|_2)^{-1} + \frac{(\sigma_{ij}F^j_b)^2}{c^2} \right)
\\
- (GA_{aa})^2 ((F^i_a)_1-(F^i_a)_2)^2 \left( \frac1{c^2} - (GA_{aa})^{-1}\right) 
\\
+ o\left( (\Pcal_1-\Pcal_2)^2,((F^i_a)_1-(F^i_a)_2)^2 \right)
\end{multline}
so on recalling $\partial_{H^{-1}}\Pcal = - (gH^3+3GH^4 A_{cc}) $, a % straightforward 
sufficient condition
% \footnote{
%   Formally, by a % (heuristic) 
%   Chapman-Enskog analysis,
%   one can check that the entropy-consistency of a possible limit $\bU\to\tilde\bU$ for the relaxation system as $\epsilon\to0$
%   requires the same condition, using:
%   $$
%   \partial_t \tilde U^\delta + \partial_a \tilde\Pcal^\delta_a % = \partial_a (\tilde\Pcal^\delta_a - \Pi^\delta_a) + o(\epsilon)
%   = \epsilon \partial_a \left( \partial_t\tilde H^{-1}(\partial_{H^-1}\Pcal^\delta_a) + \partial_t\tilde F^\delta_a (\partial_{F^\delta_a}\Pcal^\delta_a)
%   - (c_\delta)^2 \tilde U^\delta \right) + o(\epsilon) \,.
%   $$
% } 
for entropy-consistency (of the 3-wave Riemann solver in Lagrange coordinates) reads:
\beq
\label{whitham3}
c^2 \ge \max\left( GA_{aa} , (gH^3+3GH^4 A_{cc})(F^j_b)^2 \right)
\eeq
% which should hold 
on an \emph{admissible} neighbourhood of $u_1$ containing $u_2$ (such that $H>0$).
\begin{lemma}
\label{lem:entropylag3}
The 1D Riemann solver \eqref{eq:SVUCM0detaillag1Drelaxeddiag}--\eqref{eq:extension} for (the $\tilde q$ subsystem of) SVM in Lagrangian coordinates
is entropy-consistent in the sense of Prop.~\ref{prop:decay}, 
for the entropy $E$ given by \eqref{eq:internalenergylag1D} with numerical flux $\tilde G_{\be_b}=(\Pi_a^i U^i)|_{a/t=0}$,
provided \eqref{whitham3} holds on admissible neighbourhoods of the left/right states that resp. contain the left/right intermediate state.
\end{lemma}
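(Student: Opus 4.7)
The plan is to verify the sufficient condition \eqref{solversufficientcondition} from Prop.~\ref{prop:decay} for the 1D Riemann solver in Lagrangian coordinates, exploiting the already-established BGK relaxation structure \eqref{eq:SVUCM0detaillag1Drelaxed}--\eqref{eq:extension}. First, I would observe that the diagonal form \eqref{eq:SVUCM0detaillag1Drelaxeddiag} shows all fields are linearly degenerate with speeds $-c,\,0,\,+c$, so the Riemann solution of the relaxed system is a three-wave fan with two intermediate states $u_l^*,u_r^*$ given explicitly by \eqref{eq:1paramsol}; in particular the contact quantities $A_{cc}$ and $A_{\alpha\beta}$ appear only as ``flux coefficients'' that jump through the $0$-wave, so the numerical entropy flux $\tilde G_{\be_b}=(\Pi_a^iU^i)|_{a/t=0}$ is single-valued because $(\Pi_a^i)_l^*=(\Pi_a^i)_r^*$ and $(U^i)_l^*=(U^i)_r^*$.

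Second, I would reduce the global entropy-consistency \eqref{solversufficientcondition} to the pair of inequalities \eqref{eq:intermediateinequality}, one per side of the $0$-wave, between a boundary state $u_2=u_{l/r}$ and the neighbouring intermediate state $u_1=u_{l/r}^*$. Using the dissipation identity \eqref{eq1:dissip} for the convex half-fluxes $\psi^\pm$, combined with the Rankine--Hugoniot relation $c(u_2-u_1)+F(u_2)-F(u_1)=0$ that characterizes the linearly-degenerate $\pm c$-waves, this rewrites as the scalar inequality \eqref{eq:0wavedissipation}.

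Third, I would prove \eqref{eq:0wavedissipation} via the second-derivative test already sketched in the excerpt: the functional on the left vanishes at $u_1=u_2$, and its first differential (computed in the excerpt through $\partial_{H_2^{-1}}$ and $\partial_{(F^i_a)_2}$) also vanishes at $u_1=u_2$; the leading quadratic form in $(\Pcal_1-\Pcal_2)$ and $(F^i_a)_1-(F^i_a)_2$ is then
\[
-(\partial_{H^{-1}}\Pcal|_2)(\Pcal_1-\Pcal_2)^2\Bigl((\partial_{H^{-1}}\Pcal|_2)^{-1}+\tfrac{(\sigma_{ij}F^j_b)^2}{c^2}\Bigr)
-(GA_{aa})^2\bigl((F^i_a)_1-(F^i_a)_2\bigr)^2\Bigl(\tfrac1{c^2}-(GA_{aa})^{-1}\Bigr),
\]
and, recalling $\partial_{H^{-1}}\Pcal=-(gH^3+3GH^4 A_{cc})<0$, this expression is non-positive exactly when the Whitham/sub-characteristic condition \eqref{whitham3} holds. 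Globalising the local bound from a neighbourhood of $u_1$ to the actual boundary state $u_2$ is done by integrating along an admissible path (staying in $H>0$) on which \eqref{whitham3} holds uniformly; this is ensured by choosing $c$ large enough in Lemma~\ref{lem:c}.

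The main obstacle I expect is precisely this last point: promoting the infinitesimal quadratic estimate into the finite-amplitude inequality \eqref{eq:intermediateinequality} on the whole admissible neighbourhood that contains both $u_l^*$ and $u_{l}$ (resp. $u_r^*,u_r$). This requires one to show that the path can be kept in the convex admissibility domain (so that $H>0$ and $A_{cc},A_{aa}>0$), and that \eqref{whitham3} remains valid uniformly along it; once these two facts are in hand, integrating the differential inequality twice along the straight segment from $u_1$ to $u_2$ yields \eqref{eq:0wavedissipation} and hence the claimed entropy-consistency.
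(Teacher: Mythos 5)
Your proposal follows essentially the same route as the paper: reduce \eqref{solversufficientcondition} to the per-side inequality \eqref{eq:intermediateinequality} via the BGK/diagonal structure and the added energy equation \eqref{eq:extension}, rewrite it as \eqref{eq:0wavedissipation} using \eqref{eq1:dissip} and the Rankine--Hugoniot relation, and then check it by the vanishing of the functional and of its first differential at $u_1=u_2$ together with non-positivity of the quadratic form under the sub-characteristic condition \eqref{whitham3}. The ``globalisation'' issue you flag is handled in the paper exactly as you suggest, namely by assuming \eqref{whitham3} on an admissible neighbourhood of $u_1$ containing $u_2$ -- which is precisely the proviso built into the statement of the lemma -- so there is no gap.
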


Then, one can look for numerical values of $c$ satisfying \eqref{whitham3}, % {eq:intermediateinequality} or \eqref{eq:0wavedissipation} 
e.g. with $c$ % a variable
solution to $\partial_t c=0$ as in \cite{bouchut-2004} to enforce $H_{l/r}^*>0$,
and it remains to map the solver above into Eulerian coordinates (see Section~\ref{sec:oned}).
Note however a \emph{consistency limitation} on the choice of % the material direction 
$\be_a$ to approximate SVM Riemann solutions in material coordinates as above, with 3 waves only.
Indeed, the 1D solutions computed with \eqref{eq:SVUCM0detaillag1Drelaxed} 
are % actually 
translation-invariant solutions to a 2D % (rotation-invariant) 
hyperbolic system of conservation laws in Lagrangian coordinates like for instance:
\beq
\label{eq:SVUCM0detaillag2Drelaxed}
\begin{aligned}
& \partial_t H^{-1} - \partial_\alpha \Vcal_\alpha = 0
\\
& \partial_t \Vcal_\alpha + \partial_\beta \Zcal_{\alpha\beta}
= %\frac
({U^i\sigma_{\alpha\beta}\sigma_{ij}F^j_\beta - \Vcal_\alpha})/\varepsilon 
\\
& \partial_t \Zcal_{\alpha\beta} + c^2 \partial_\beta \Vcal_\alpha = %\frac
({\Pcal^i_\alpha\sigma_{\alpha\beta}\sigma_{ij}F^j_\beta - \Zcal_{\alpha\beta}})/\varepsilon
\\
& \partial_t F^i_\alpha - \partial_\alpha U^i = 0
\\
& \partial_t U^i + \partial_\alpha \Pi^i_\alpha = 0 % -K U^i
\\
& \partial_t \Pi^i_\alpha + c^2 \partial_\alpha U^i = %\frac
({ \Pcal^i_\alpha - \Pi^i_\alpha })/\varepsilon
% \\
% & \partial_t A_{\alpha\beta} = 0 % ( |\bF_h|^{-2}\sigma_{\alpha\alpha'}\sigma_{\beta\beta'} F^k_{\alpha'} F^k_{\beta'} -A_{\alpha\beta})/\lambda
% \\ 
% & \partial_t A_{cc} = 0 % (H^{-2}-A_{cc})/\lambda
\end{aligned}
\eeq
% which it is not rotation-invariant % the equations, and thus also the solution variables,
% both with respect to the current configuration and to the reference configuration % <= simultaneously
% since
where $\alpha,\beta\in\{a,b\}$ refers to the axes of one particular Cartesian coordinate system.
This relaxation generalizes to
\beq
\label{eq:SVUCM0detaillag2Drelaxed2}
\begin{aligned}
& \partial_t H^{-1} - \partial_\alpha \Vcal_\alpha = 0
\\
& \partial_t \Vcal_\alpha + \partial_\beta \Zcal_{\alpha\beta}
= %\frac
({U^i\sigma_{\alpha\beta}\sigma_{ij}F^j_\beta - \Vcal_\alpha})/\varepsilon 
\\
& \partial_t \Zcal_{\alpha\beta} + \tilde c^2_{\beta,\gamma} \partial_\gamma \Vcal_\alpha 
% + \sigma_{\alpha\gamma} \partial_\gamma ( \sigma_{\beta\delta} a_\perp \Vcal_\delta + a_\parallel \Vcal_\beta ) ?????????????????
= %\frac
({\Pcal^i_\alpha\sigma_{\alpha\beta}\sigma_{ij}F^j_\beta - \Zcal_{\alpha\beta}})/\varepsilon
\\
& \partial_t F^i_\alpha - \partial_\alpha U^i = 0
\\
& \partial_t U^i + \partial_\alpha \Pi^i_\alpha = 0 % -K U^i
\\
& \partial_t \Pi^i_\alpha + c^2_{\alpha,\gamma} \partial_\gamma U^i 
% + \sigma_{\alpha\gamma} ( \sigma_{ij} b_\perp \partial_\gamma U^j + b_\parallel \partial_\gamma U^i ) ?????????????????????
= %\frac
({ \Pcal^i_\alpha - \Pi^i_\alpha })/\varepsilon
% \\
% & \partial_t A_{\alpha\beta} = 0 % ( |\bF_h|^{-2}\sigma_{\alpha\alpha'}\sigma_{\beta\beta'} F^k_{\alpha'} F^k_{\beta'} -A_{\alpha\beta})/\lambda
% \\ 
% & \partial_t A_{cc} = 0 % (H^{-2}-A_{cc})/\lambda
\end{aligned}
\eeq
with the \emph{same} symmetric-positive coefficient-matrices $\tilde c^2_{\alpha,\gamma} = c^2_{\alpha,\gamma}=c^2_{\gamma,\alpha}$ for $(\Pi^i_\alpha)_\alpha$, $(Z_{\beta\alpha})_\alpha$
in the wave equations resp. for $U^i$, $\Vcal_\beta$.
(The 1D Riemann solver \eqref{eq:SVUCM0detaillag1Drelaxed} coincides with 1D solutions
 of \eqref{eq:SVUCM0detaillag2Drelaxed2} along one principal directions). % making the coefficient matrix diagonal with wave celerity c^2
%\footnote{
% In fact, one could even choose a system more general than \eqref{eq:SVUCM0detaillag2Drelaxed2} with two coefficient-matrices $c^2_{\alpha,\gamma}=c^2_{\gamma,\alpha}$ 
% for $(\Pi^i_\alpha)_\alpha$, $(Z_{\beta\alpha})_\alpha$ where one of the eigenvalues would differ (for stability reasons, this would preferably be the smallest).
% % Recall the skew-symmetric $2\times2$ matrices remain unchanged after a basis change: the case of $\sigma_{\alpha\beta} ( b \partial_\beta U^j + a \partial_\beta U^i ) $
% On the contrary, if $\partial_\alpha \Vcal_\beta$ were instead of $\partial_\beta \Vcal_\alpha$ in \eqref{eq:SVUCM0detaillag2Drelaxed},
%then the model would not be a particular case of \eqref{eq:SVUCM0detaillag2Drelaxed2} since the two coefficient-matrices would have the same eigenvalues but not the same eigenvectors
%}
%
But % in any case % we have not been able
it seems impossible to find a 2D system that admits the 1D Riemann solver \eqref{eq:SVUCM0detaillag1Drelaxed} as particular solution,
and that is a relaxation % approximation 
of the 2D Lagrangian SVM system \eqref{eq:SVUCM0detaillag} \emph{for all (smooth) solutions}.
Indeed, recalling $\partial_{H^{-1}}\Pcal=-(gH^3+3GH^4A_{cc})$,
$\Pi^i_\alpha$ in \eqref{eq:SVUCM0detaillag2Drelaxed2} above cannot consistently approximate \emph{any} exact solution $\Pcal^i_\alpha$ to
% $\Pcal^i_\alpha := \Pcal \sigma_{ij}\sigma_{\alpha\beta} F^j_\beta - G F^i_\beta A_{\beta \alpha}$ satisfies
\begin{multline}
\label{eq:pressureeq}
\partial_t \Pcal^i_\alpha % = \partial_t( \Pcal \sigma_{ij}\sigma_{\alpha\beta} F^j_\beta - G F^i_\beta A_{\beta \alpha} ) 
- \sigma_{ij}\sigma_{\alpha\beta} \Pcal \partial_\beta U^j
+ G A_{\alpha\beta} \partial_\beta U^i
% \\
% - (\partial_{H^{-1}}\Pcal) (\sigma_{ij}\sigma_{\alpha\beta}F^j_\beta) \partial_\gamma \left( \sigma_{kl}\sigma_{\delta\gamma}F^l_\delta U^k \right) = 0
% \\
% - (\partial_{H^{-1}}\Pcal) (\sigma_{ij}\sigma_{\alpha\beta}F^j_\beta) 
% \partial_\alpha \left(  (\sigma_{ij}\sigma_{\alpha\beta}F^j_\beta)U^i \right) 
% - (\partial_{H^{-1}}\Pcal) (\sigma_{ij}\sigma_{\alpha\beta}F^j_\beta) 
% \partial_\beta \left(  (\sigma_{ij}\sigma_{\alpha\beta}F^i_\alpha)U^j \right) = 0
% \\
% - (\partial_{H^{-1}}\Pcal) |\sigma_{ij}\sigma_{\alpha\beta}|^2 F^j_\beta 
% \left(  \partial_\alpha ( F^j_\beta U^i - F^i_\beta U^j ) + \partial_\beta (F^i_\alpha U^j-F^j_\alpha U^i) \right) = 0
\\
- (\partial_{H^{-1}}\Pcal) |\sigma_{ij}\sigma_{\alpha\beta}|^2 F^j_\beta 
\left( F^j_\beta  \partial_\alpha U^i - F^i_\beta  \partial_\alpha U^j + F^i_\alpha \partial_\beta U^j - F^j_\alpha \partial_\beta U^i \right) = 0
\end{multline}
insofar as the term $\partial_\alpha U^j$ in the last line of \eqref{eq:pressureeq} is always missing
(it vanishes though for 1D solutions along some direction $\beta$ such that $\sigma_{ij}F^j_\beta=0$ for all $i$).
% $\Pi^i_\alpha$ can be interpreted as a \emph{crude} approximation of $\Pcal^i_\alpha$ when 
% $$
% \sigma_{ij}\sigma_{\alpha\beta}|F^j_\beta|^2(gH^3+3GH^4A_{cc}) + GA_{\alpha\alpha} > \sigma_{ji}\sigma_{\alpha\beta}|F^i_\beta|^2(gH^3+3GH^4A_{cc})
% $$
% The one-parameter relaxation above is reminiscent of standard % Suliciu's 
% relaxation techniques for gas dynamics \cite{bouchut-2004}.
So % it seems 
the 1D Riemann solver above with 3-wave cannot be consistent with any solution of the 2D SVM system,
even smooth. % and translation invariant !?

\subsection{Parameter initialization}
\label{app:parameter}

To achieve \eqref{whitham3} in a Riemann problem, 
% \beq
% c^2 \ge \max\left( GA_{aa} , (gH^3+3GH^4 A_{cc})(F^j_b)^2 \right)
% \eeq
one can require left/right initial values of the "relaxation parameter" $c$ as follows % Lemma 5.3 in \cite{bouchut-boyaval-2013}
\begin{lemma}
\label{lem:c}
Defining for $\delta\in\{\parallel,\perp\}$ and $o=\{l,r\}$ in a Riemann problem
\beq
\label{eq:tildecpara}
\tilde c_o:=\sqrt{\max\{G(A_{aa})_o,(H F_b^\delta)^2_o(gH+3H^2A_{cc})_o\}}
\eeq
then conditions \eqref{whitham3} are ensured using as initial values (with $o\neq o'\{l,r\}$):
\beq
\label{eq:c} 
c_o = \tilde c_o + 2 H_o 
\left( \left[ (\Vcal_a)_l-(\Vcal_a)_r \right]_+ + \frac{\left[ (\Zcal_{aa})_{o'}-(\Zcal_{aa})_o \right]_+}{H_l \tilde c_l+H_r \tilde c_r}  \right) \,.
\eeq
\end{lemma}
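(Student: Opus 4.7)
}
The plan is to verify \eqref{whitham3} on an admissible segment joining each boundary state $u_o$ to the corresponding intermediate state $u_o^*$ given by \eqref{eq:1paramsol}, by separating the two summands of $\tilde c_o^2$ and exploiting that $A_{aa},A_{cc},F^j_b$ are flux-coefficients transported only through the contact wave. With the definition \eqref{eq:tildecpara}, the choice \eqref{eq:c} immediately gives $c_o\ge\tilde c_o$, so that \eqref{whitham3} holds at $u_o$ itself. Since $A_{aa}$, $A_{cc}$ and $F^\delta_b$ do not vary through the acoustic waves in \eqref{eq:SVUCM0detaillag1Drelaxeddiag}, the inequality $c_o^2\ge G(A_{aa})_o$ is automatically preserved on the whole $u_o$--$u_o^*$ segment. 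Only the pressure-like summand $(HF^\delta_b)^2(gH+3GH^2A_{cc})$ needs to be re-examined, and only because $H$ can grow from $H_o$ to $H_o^*$.

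Next, I would quantify that growth. The Rankine--Hugoniot relations embedded in \eqref{eq:1paramsol} give
\begin{equation*}
H_o^{-1}-H_o^{-*} \;=\; \frac{\epsilon_o}{2c_o}\bigl((\Vcal_a)_l-(\Vcal_a)_r\bigr) \;+\; \frac{1}{2c_oc_{o'}}\bigl((\Zcal_{aa})_o-(\Zcal_{aa})_{o'}\bigr)\epsilon'_o,
\end{equation*}
with appropriate sign conventions $\epsilon_o,\epsilon'_o\in\{+,-\}$ depending on whether $o=l$ or $o=r$. The intermediate state $H_o^*$ can exceed $H_o$ only through the positive parts $[(\Vcal_a)_l-(\Vcal_a)_r]_+$ and $[(\Zcal_{aa})_{o'}-(\Zcal_{aa})_o]_+$, which is precisely the combination appearing as the additive correction in \eqref{eq:c}. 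A direct computation then yields a quantitative bound of the form $H_o^*\le \kappa H_o$ with $\kappa>1$ controlled by the ratio between $\tilde c_o$ and the two positive parts in \eqref{eq:c}, and simultaneously $H_o^{*}>0$ (admissibility).

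Finally, since $H\mapsto H^2(g+3GHA_{cc})$ is monotone increasing on $H>0$, the bound $H_o^*\le\kappa H_o$ gives
\begin{equation*}
(H_o^*F^\delta_b)_o^2\bigl(gH_o^*+3G(H_o^*)^2A_{cc}\bigr)_o \;\le\; \kappa^4\,\tilde c_o^2,
\end{equation*}
so it remains to check that the additive term in \eqref{eq:c} contributes at least a factor $\sqrt{\kappa^4-1}\,\tilde c_o$ to $c_o$; this reduces to an algebraic inequality combining the two summands of \eqref{eq:c}, which can be verified by case analysis on the signs of $(\Vcal_a)_l-(\Vcal_a)_r$ and $(\Zcal_{aa})_{o'}-(\Zcal_{aa})_o$. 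The main obstacle I anticipate is this last quantitative estimate: one must show that the very specific combination in \eqref{eq:c}, with the denominator $H_l\tilde c_l+H_r\tilde c_r$ (rather than, say, $2\max(H_o\tilde c_o)$) is chosen so as to simultaneously guarantee positivity of both $H_l^*$ and $H_r^*$ \emph{and} the pressure bound on each side, while using a possibly different $c_l\ne c_r$ across the two acoustic waves. The ``$2H_o$'' prefactor in \eqref{eq:c} is precisely what absorbs the Jacobian $|\partial_{H^{-1}}H|=H_o^2$ in the linearisation of the pressure condition, and this is the calculation I would carry out in detail.
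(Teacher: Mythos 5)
Your skeleton is the right one and, as far as one can tell, it is the argument the paper delegates wholesale to Lemma~5.3 of \cite{bouchut-boyaval-2013} (the printed proof is a one-line citation, so your outline is already more explicit than the paper's). You correctly isolate the two branches of \eqref{whitham3}: the branch $c_o^2\ge G(A_{aa})_o$ is settled at $u_o$ and persists because $A_{aa}$, $A_{cc}$ and $F^\delta_b$ are flux coefficients constant across the acoustic waves; only the pressure branch $(gH^3+3GH^4A_{cc})(F^j_b)^2$ can deteriorate, and only through a decrease of $\tau=H^{-1}$ from $\tau_o$ to $\tau_o^*$, which \eqref{eq:1paramsol} bounds by exactly the two positive parts $\left[(\Vcal_a)_l-(\Vcal_a)_r\right]_+$ and $\left[(\Zcal_{aa})_{o'}-(\Zcal_{aa})_o\right]_+$ appearing in \eqref{eq:c}.

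The gap is that the decisive estimate is announced rather than proved, and as you have set it up it is circular: you reduce to $H_o^*\le\kappa H_o$ and then require $c_o\ge\kappa^2\tilde c_o$, but $\kappa-1$ is of order $H_o\bigl(\left[\cdots\right]_+/c_o+\left[\cdots\right]_+/(c_oc_{o'})\bigr)$ and thus depends on the very speeds you are choosing, so ``the additive term contributes at least $\sqrt{\kappa^4-1}\,\tilde c_o$'' is not yet an inequality between given data (note also that the additive form needs $c_o-\tilde c_o\ge(\kappa^2-1)\tilde c_o$, not a square-root combination). Two ingredients close it. First, the pressure law satisfies $\bigl|\partial_\tau\sqrt{-\partial_\tau\Pcal}\bigr|\le 2H\sqrt{-\partial_\tau\Pcal}$ (the exponent $4$ in $3GH^4A_{cc}$ yields the constant $2$); integrating this logarithmic bound over $[\tau_o^*,\tau_o]$ is what produces both your factor $(\tau_o/\tau_o^*)^2=\kappa^2$ and the prefactor $2H_o$ in \eqref{eq:c} --- it is not an absorption of the Jacobian $|\partial_{H^{-1}}H|=H_o^2$ as you suggest at the end. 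Second, the monotonicity that breaks the circularity: the required correction is divided by $c_o\ge\tilde c_o$ and $c_{o'}\ge\tilde c_{o'}$, so replacing the unknown speeds by $\tilde c_o,\tilde c_{o'}$ on the right-hand side (whence the explicit denominator $H_l\tilde c_l+H_r\tilde c_r$, a lower bound for the true one) only strengthens the requirement, and \eqref{eq:c} is then an explicit supersolution of the implicit inequality. Without these two steps, and without the companion check that the same positive parts keep $H^*_{l/r}>0$, the case analysis you defer to cannot actually be carried out.
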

\begin{proof}
The computation is straightforward and similar to Lemma 5.3 in \cite{bouchut-boyaval-2013}.
\end{proof}

\section{Hyperbolicity of SVUCM}
\label{app:hyperbolicity}

We investigate the hyperbolicity of SVUCM and variations with a non-zero slip-parameter, the so-called Johnson-Segalman models see e.g. \cite{boyaval-2018,boyaval-hal-01661269}. 

\begin{proposition} Among all % cases
Gordon-Schowalter derivatives with slip-parameter $\zeta\in[0,2]$, only % the case 
$\zeta=0$ (i.e. the Upper-Convected % (UC) % Oldroyd-B 
case) % obviously 
ensures hyperbolicity of % the system 
(\ref{eq:SVH}--\ref{eq:SVHUV}--\ref{eq:Sigmah}--\ref{eq:Sigmazz})
under % the simple parameter
strain-free constraints, namely: $H,B_{zz}>0$ and $\bB_h=\bB_h^T>0$. % be a symmetric positive definite tensor (thus also $\bC$).
\end{proposition}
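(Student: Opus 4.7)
The plan is to reduce the hyperbolicity question to computing the spectrum of the frozen $1$D Jacobian at a well-chosen admissible state, where the linearised system decouples into blocks and a single $2\times 2$ shear block already singles out $\zeta=0$ uniquely.

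First, I would write the Gordon--Schowalter version of \eqref{eq:Sigmah}--\eqref{eq:Sigmazz}, in which $D_t\bB - \bL\bB - \bB\bL^T$ is replaced by
$$D^{GS}_t \bB := D_t\bB - \bL\bB - \bB\bL^T + \zeta\,(\bD\bB + \bB\bD),\qquad \bD=(\bL+\bL^T)/2,$$
leaving continuity and momentum \eqref{eq:SVH}--\eqref{eq:SVHUV} unchanged. This recovers the upper-convected derivative at $\zeta=0$ and the lower-convected one at $\zeta=2$. By Galilean invariance and isotropy of the admissible set $\{H,B_{zz}>0,\ \bB_h=\bB_h^T>0\}$, it is enough to study plane waves along $\be_x$ and to freeze coefficients at a base state with $\bU=\bzero$. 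For the ``non-hyperbolic'' direction I will further specialise to $B_{xy}=0$, which is clearly admissible whenever $B_{xx},B_{yy}>0$.

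Second, I would linearise the $7$ equations for $(H,U,V,B_{xx},B_{xy},B_{yy},B_{zz})$. Because $B_{xy}=0$ at the base state, the frozen $1$D system decouples into an ``acoustic'' $4\times 4$ block on $(H,U,B_{xx},B_{zz})$, a ``shear'' $2\times 2$ block on $(V,B_{xy})$, and a scalar transport on $B_{yy}$. Expanding in $1$D each tensor entry of $(\bL_h\bB_h+\bB_h\bL_h^T)-\zeta(\bD_h\bB_h+\bB_h\bD_h)$ yields for the shear block
$$\partial_t V - G\,\partial_x B_{xy}=0,\qquad \partial_t B_{xy} - \bigl((1-\tfrac{\zeta}{2})B_{xx}-\tfrac{\zeta}{2}B_{yy}\bigr)\partial_x V = 0,$$
the $-\tfrac{\zeta}{2}B_{yy}$ contribution arising from $(\bD_h\bB_h+\bB_h\bD_h)_{12}|_{B_{xy}=0}=\tfrac{1}{2}(B_{xx}+B_{yy})\partial_x V$.

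Third, the eigenvalues of this shear block satisfy
$$\lambda^2 \;=\; G\bigl((1-\tfrac{\zeta}{2})B_{xx}-\tfrac{\zeta}{2}B_{yy}\bigr),$$
from which the dichotomy is immediate. When $\zeta=0$ this reduces to $\lambda^2=GB_{xx}>0$ on the whole admissible set, and a routine computation of the $4\times 4$ acoustic block and the $B_{yy}$-transport provides the remaining real eigenvalues with a complete eigenbasis, yielding the hyperbolic eigenstructure listed after \eqref{eq:SVUCM}. When $\zeta\in(0,2]$, pick an admissible state with $B_{xy}=0$ and $B_{yy}>\tfrac{2-\zeta}{\zeta}B_{xx}$ (any $B_{yy}>0$ suffices when $\zeta=2$): the shear eigenvalue is then strictly imaginary, so the frozen Jacobian is not $\RR$-diagonalisable and hyperbolicity fails at that admissible state.

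The main obstacle is careful book-keeping rather than a conceptual difficulty. One must expand the $(1,2)$-entry of $\zeta(\bD_h\bB_h+\bB_h\bD_h)$ correctly and observe that in $1$D it still couples to $B_{yy}$ even when $B_{xy}=0$; without that single off-diagonal contribution the shear block would be hyperbolic for every $\zeta$ and the dichotomy would be invisible. Conceptually, the payoff is the choice $B_{xy}=0$ for the base state, which decouples the $7\times 7$ Jacobian and reduces the characteristic polynomial to the one-line dispersion relation above.
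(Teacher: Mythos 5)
Your reduction to the frozen 1D Jacobian and your counterexample for $\zeta\in(0,2]$ are correct, and on that half of the dichotomy you are actually sharper than the paper: the paper computes the characteristic polynomial for general $B_{xy}$, derives the reality condition \eqref{eq:conditionreal}, and merely remarks that it is ``not clearly satisfied'' when $\zeta\neq0$, whereas you exhibit an explicit admissible state ($B_{xy}=0$, $B_{yy}>\tfrac{2-\zeta}{\zeta}B_{xx}$) at which the decoupled $2\times2$ shear block has purely imaginary eigenvalues. Your expansion of the Gordon--Schowalter term and the resulting shear equations agree with the paper's 1D system \eqref{eq:svucm1D}, so the negative direction is sound.

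The gap is in the positive direction. Hyperbolicity of the $\zeta=0$ system must be verified at \emph{every} admissible state, but your argument only covers the slice $B_{xy}=0$: the decoupling into a $4\times4$ acoustic block, a $2\times2$ shear block and a transport equation is itself a consequence of freezing at $B_{xy}=0$, so the relation $\lambda^2=GB_{xx}$ is not established ``on the whole admissible set''. Nor can rotation invariance rescue this: diagonalising $\bB_h$ by a rotation simultaneously rotates the propagation direction away from $\be_x$, so the pairs (direction, state) modulo rotation are not exhausted by ($\be_x$, states with $B_{xy}=0$). For $B_{xy}\neq0$ the shear and acoustic modes mix and one must compute the full quartic, as the paper does: with $\Delta$ as defined there, the four nontrivial eigenvalues are $U\pm\tfrac12\bigl(\Delta+G(4B_{xx}-2\zeta(B_{xx}+B_{yy}))\pm(\Delta^2+G^2(4\zeta B_{xy})^2)^{1/2}\bigr)^{1/2}$, and the key observation is that the off-diagonal coupling enters only through the product $\zeta B_{xy}$; at $\zeta=0$ the inner square root collapses to $|\Delta|$, the $B_{xy}$-dependence cancels, and one obtains the real eigenvalues $U\pm\sqrt{gH+3GB_{zz}+GB_{xx}}$, $U\pm\sqrt{GB_{xx}}$ and $U$ for all admissible states. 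You need this computation (or an equivalent one at $B_{xy}\neq0$) to close the $\zeta=0$ half of the claim.
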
 

The proof follows from rotation-invariance,
after computing the eigenvalues of the jacobian in a 1D projection of the system (\ref{eq:SVH}--\ref{eq:SVHUV}--\ref{eq:Sigmah}--\ref{eq:Sigmazz}) like
\beq
\label{eq:svucm1D}
\left\lbrace
\begin{aligned}
\partial_{t} \ro  + \partial_{x}( \ro \ux ) & = 0
\\
\partial_{t} \ux  + \ux\partial_{x} \ux &
+ g \partial_x \ro 
- G \left(  
  (\sx -\sz)/\ro \partial_x \ro
+ \partial_x (\sx -\sz)
  \right)
= 0
\\
\partial_{t}      \uy  
+ \ux\partial_{x} \uy 
&
- G \left( 
  \sxy/\ro \partial_x \ro
+ \partial_x\sxy
 \right)
= 0
\\
\partial_{t}      \sx  + \ux\partial_{x} \sx 
&
- \left( 
2 (1-\zeta)\sx \partial_x \ux % + \sxy\left((2-\zeta)\partial_y\ux
-\zeta\sxy\partial_x\uy % \right) 
\right) % - \left( \zeta(\partial_x\uy + \partial_y\ux) - 2\partial_y\ux \right)\sxy
= 0
% = (2\nu\partial_x\ux-\sx) / \lambda %\sx^\infty
\\
\partial_{t}      \sy  
+ \ux\partial_{x} \sy 
&
- % \left( 2(1-\zeta)\sy \partial_y \uy + 
\sxy %\left(-\zeta\partial_y\ux+
(2-\zeta)\partial_x\uy % \right) 
% \right) % - \left( \zeta(\partial_x\uy + \partial_y\ux) - 2\partial_x\uy \right)\sxy
= 0
% = (2\nu\partial_y\uy-\sy) / \lambda %\sy^\infty
\\
\partial_{t}      \sxy  
+ \ux\partial_{x} \sxy 
&
- 
\left( (1-\zeta/2)\sx-\zeta/2)\sy \right) \partial_x\uy % - \frac12\left( (\zeta-2)\sy+\zeta\sx \right)\partial_y\ux - \frac12\left( \zeta\sy+(\zeta-2)\sx \right)\partial_x\uy
- (1-\zeta) \sxy \partial_x\ux % +\partial_y\uy
% - \left( \zeta( \sxy\partial_x\ux + (\sy+\sx)(\partial_y\ux+\partial_x\uy)/2 + \sxy\partial_y\uy ) -( \sxy\partial_x\ux + \sy\partial_y\ux + \sx\partial_x\uy + \sxy\partial_y\uy ) \right)
= 0
% = (\nu(\partial_x\uy+\partial_y\ux)-\sxy) / \lambda % \sxy^\infty
\\
\partial_{t}      \sz  + \ux\partial_{x} \sz 
&
+ 2(1-\zeta)\sz \partial_x\ux % - 2(1-\zeta)\sz (\partial_x\ux +\partial_y \uy)
= 0
% = (-2\nu(\partial_x\ux+\partial_y\uy)-\sz) / \lambda %\sz^\infty
\end{aligned}
\right.
\eeq
similarly to the proof in \cite{EDWARDS1990411} for a similar system written in stress variables $\bSigma$ when $\zeta=0$
(though without vertical stress and strain components, which allow here mass preservation).
Denoting $\Delta = 2gh + G\left( 2(3-2\zeta)\sz+\zeta\sy-3\zeta\sx \right) = 2gh + 6G\sz + G\zeta\left( \sy-4\sz-3\sx \right)$, four eigenvalues read
$$
\ux\pm\frac12\sqrt{ 
 \Delta % 2gh + G\left( 2(3-2\zeta)\sz+\zeta\sy-3\zeta\sx \right)
 + G\left( 4\sx-2\zeta(\sx+\sy) \right) % G\left(\zeta(-4\sz-\sy-5\sx)+6\sz+4\sx\right)
\pm\sqrt{ 
\Delta^2
% \left(
% 2gh + G \left( 2(3-2\zeta)\sz+\zeta\sy-3\zeta\sx % = \zeta(-4\sz-\sy-5\sx)+6\sz+4\sx - (4\sx-2\zeta(\sx+\sy))
% \right) 
% \right)^2  
+ G^2(4\zeta\sxy)^2
% (2gh)^2 + 
% G^2
% \left( 
%   \left( 2(3-2\zeta)\sz+\zeta\sy-3\zeta\sx \right)^2
% %   \left(2(3-2\zeta)\sz\right)^2 % (16\zeta^2-48\zeta+36)\sz^2
% %   + 2(\zeta\sy)2(3-2\zeta)\sz + 2(-3\zeta\sx)2(3-2\zeta)\sz % + ((12\zeta-8\zeta^2)*\sy+(24*\zeta^2-36*\zeta)*\sx)*\sz
% %   (\zeta\sy)^2 + 2(-3\zeta\sx)(\zeta\sy) + (-3\zeta\sx)^2 %   + 9\zeta^2\sx^2 -6\zeta^2\sx\sy + \zeta^2\sy^2
%   + (4\zeta\sxy)^2 % + 16\zeta^2\sxy^2 
% \right)
% +2(2gh)G(2(3-2\zeta)\sz+\zeta\sy-3\zeta\sx)
}
}
$$
and are real if, and only if, the following strain-parametrized inequality holds
\begin{equation}
\label{eq:conditionreal} 
G^2(4\zeta\sxy)^2 \le 2G\Delta\left( 4\sx-2\zeta(\sx+\sy) \right) + G^2\left( 4\sx-2\zeta(\sx+\sy) \right)^2 \,.
\end{equation}
% where, however, the strain values $\sx,\sy,\sxy,\sz$ vanish when $\zeta=0$.
Unless $\zeta=0$, 
the quadratic condition \eqref{eq:conditionreal} on $\zeta$ is not clearly satisfied for all values $\sx,\sy,\sxy,\sz$ of the strain.
We therefore consider only % the quasilinear system
(\ref{eq:SVH}--\ref{eq:SVHUV}--\ref{eq:Sigmah}--\ref{eq:Sigmazz}) when $\zeta=0$ (the SVUCM case),
where hyperbolicity is ensured with eigenvalues $u\pm\sqrt{gh+3G\sz+G\sx}$, $u\pm\sqrt{G\sx}$ and $u$
(with multiplicity 3) under the physcially-natural constraints $h\ge0,\sz\ge0,\sx\ge0$.

\providecommand{\bysame}{\leavevmode\hbox to3em{\hrulefill}\thinspace}
\providecommand{\MR}{\relax\ifhmode\unskip\space\fi MR }
% \MRhref is called by the amsart/book/proc definition of \MR.
\providecommand{\MRhref}[2]{%
  \href{http://www.ams.org/mathscinet-getitem?mr=#1}{#2}
}
\providecommand{\href}[2]{#2}

\end{document}